\documentclass[12pt,leqno,fleqn]{amsart}  
\usepackage[color=green!15,bordercolor=red]{todonotes}
\usepackage{amsmath,amstext,amsthm,amssymb,amsxtra}
\usepackage[normalem]{ulem}
\usepackage{txfonts} 
\usepackage[T1]{fontenc}
\usepackage{lmodern}

 \usepackage{euler}   

\usepackage{mathtools}
\mathtoolsset{showonlyrefs,showmanualtags}

\usepackage{hyperref} 
\hypersetup{
    colorlinks=true,       
    linkcolor=blue,          
    citecolor=magenta,        
    filecolor=magenta,      
    urlcolor=cyan           
}

\usepackage[]{amsrefs}

\setlength{\textwidth}{16.7cm}
\setlength{\topmargin}{0cm}
\setlength{\oddsidemargin}{0cm}
\setlength{\evensidemargin}{0cm}

\allowdisplaybreaks
\swapnumbers


\theoremstyle{plain} 
\newtheorem{lemma}[equation]{Lemma} 
\newtheorem{proposition}[equation]{Proposition} 
\newtheorem{theorem}[equation]{Theorem}

\theoremstyle{definition}

\theoremstyle{remark}
\newtheorem{remark}[equation]{Remark}

\newtheorem*{ack}{Acknowledgment}

\numberwithin{equation}{section}

%

%

%
%
%

%
%
%
%
%
%

\title[Non-homogeneous Local $T1$]{Non-Homogeneous Local $T1$ Theorem: Dual Exponents} 
 \subjclass[2000]{Primary: 42B20 Secondary: 42B25, 60G46}

\author[MT Lacey]{Michael T. Lacey}   

\address[M.T.L]{ School of Mathematics, Georgia Institute of Technology, Atlanta GA 30332, USA}
\email {lacey@math.gatech.edu}
\thanks{Research supported in part by grant NSF-DMS 0968499, 
and  a grant from the Simons Foundation (\#229596 to Michael Lacey). 
 ML and AVV benefited from the research program Operator Related Function Theory and Time-Frequency Analysis at the Centre for Advanced Study at the Norwegian Academy of Science and Letters in Oslo during 2012--2013.
  AVV  was
supported by 
the Finnish Academy of Science and Letters, Vilho, Yrj\"o and
Kalle V\"ais\"al\"a Foundation.
}

\author[AV V\"ah\"akangas]{Antti V. V\"ah\"akangas}
\address[A.V.V.]{Department of Mathematics and Statistics,
P.O. Box 68, FI-00014 University of Helsinki, Finland} \email{antti.vahakangas@helsinki.fi} \thanks{}

\begin{document}

\begin{abstract}
We provide an alternative proof of a (local) $T1$ theorem 
for dual exponents in the non-homogeneous
setting of upper doubling measures. This previously known theorem provides
necessary and sufficient conditions for the $L^p$-boundedness
of Calder\'on--Zygmund operators in the described setting, and the  novelty
 lies in the method of proof.
\end{abstract}	
	
\maketitle

\setcounter{tocdepth}{1}
\tableofcontents

\section{Introduction}

\subsection{Background and motivation}
The subject of local $Tb$ theorems in the
classical setting of $\mathbf{R}^n$ 
with Lebesgue measure is rather well understood by now.
We refer, in particular, to \cite{hytonen_nazarov} and
to  \cites{MR1934198,1011.1747,MR2474120,0705.0840,lv-perfect,1209.4161}.
These theorems extend the 
David--Journ\'e $ T1$ Theorem \cite{MR763911}, and the $ Tb$ theorem of Christ \cite{MR1096400} by  giving 
  flexible conditions under which an operator $ T$ with a Calder\'on--Zygmund kernel 
 extends to a bounded linear operator on $ L ^2 $. 
 By `local' we understand that the $Tb$ conditions involve a family of test functions $b_Q$, one for each cube $Q$, which 
should satisfy a non-degeneracy condition on its `own' $Q$.
Furthermore, both $b_Q$ and $Tb_Q$ are subject to
normalized  integrability conditions on $Q$ (with suitable exponents). 
Symmetric assumptions are imposed on $T^*$.

In the non-homogeneous setting less is know.
In the relevant literature
\cite{1011.0642, 1201.0648, MR1909219}
one usually encounters stronger $L^\infty(\mathbf{R}^n)$ (sometimes $\mathrm{BMO}$) conditions on 
$Tb_Q$'s, as well as on test functions $b_Q$.
In the search after relaxation of these conditions
one faces complications  that
arise from the feature that the underlying measure
$\mu$
need not be doubling.

We provide an alternative proof of a  local $T1$ theorem---which is, in fact, a $T1$ theorem in its local formulation---in the
non-homogeneous setting of upper doubling measures,  \cite{0909.3231,0911.4387}.
The 
local testing functions are indicators of cubes: $b_Q=\mathbf{1}_Q$,
and integrability conditions 
on $\mathbf{1}_QT\mathbf{1}_Q$ and $\mathbf{1}_QT^*\mathbf{1}_Q$
are those of dual exponents 
$1<p_1<\infty$ and $p_2=p_1/(p_1-1)$.
This result is already known and available in the literature, see Remark \ref{r.known}, and the
motivation stems from the fact that our novel proof possibly lends
itself to other situations.
In particular, 
a  non-homogeneous local $Tb$ theorem, say, for dual exponents, 
has not yet been established, and
it seems plausible that the new techniques in the present paper can be used to attack this open and difficult problem.

More precisely, our proof relies upon a so called corona decomposition,
 adapted to the maximal averages of given
two functions $f_1$ and $f_2$.  The advantage of this approach is 
that one has powerful quasi orthogonality inequalities, 
useful throughout the proof.  A direct argument can be used 
to control  a difficult  `inside' term, thereby we
avoid the typical use of paraproducts and Carleson 
measures. This argument can be viewed as an 
extension of its
`homogeneous' counterparts that are developed in  \cite{lv-perfect,1209.4161}.

\subsection{A local $T1$ theorem}
Let
 $ \mu $ be a compactly supported Borel measure on $ \mathbf{R} ^{n}$.
We assume  
the upper doubling conditions of  Hyt\"onen \cite{0909.3231}: there is   
a dominating function $ \lambda \;:\; \mathbf{R} ^{n} \times \mathbf{R} _+ \to \mathbf{R} _+$, and a constant $C_\lambda>0$,
such that for all $x\in\mathbf{R}^n$ and $r>0$:
\begin{align}
\mu (B (x,r)) \le \lambda (x,r)\le C_\lambda \lambda(x,r/2)\,.
	\end{align}
Moreover, we assume that
$r\mapsto \lambda (x, r)$ is non-decreasing for all $ x\in \mathbf{R} ^{n}$.
 The number $ d = \log_2 C_ \lambda $ can be thought 
of as the dimension of $ \mu $.  

We assume that a linear operator $ T$
is bounded on $L^2(d\mu)$, and it is adapted to $ \lambda$ in the following sense.  There is 
a kernel $ K\;:\; \mathbf{R} ^{n} \times \mathbf{R} ^{n} \to \mathbf{R} $ such that for all compactly supported $f\in L^2(\mathbf{R}^n)$,
\begin{gather*}
Tf(x)=\int_{\mathbf{R}^n}  K (x,y) f (y) \, d\mu(y)\,,\qquad x\not\in\mathrm{supp}(f).
\end{gather*}
We assume that these kernel estimates hold for some $\eta\in (0,1)$: 
\begin{align}\label{e.size}
	\lvert  K (x,y)\rvert \le 
	\min\bigg\{\frac{1}{\lambda (x, \lvert  x-y\rvert )},
	\frac{1}{\lambda (y, \lvert  x-y\rvert )} \bigg\} \,, \qquad x \neq y\,,
\end{align}
\begin{equation}\label{e.cancel}
\lvert K(x,y)-K(x',y)\rvert \le \frac{\lvert x-x'\rvert^{\eta}}{\lvert x-y\rvert^{\eta} \lambda(x,\lvert x-y\rvert)},\qquad
\lvert x-y\rvert \ge 2\lvert x-x'\rvert\,,
\end{equation}
and
\[
\lvert K(x,y)-K(x,y')\rvert \le \frac{\lvert y-y'\rvert^{\eta}}{\lvert x-y\rvert^{\eta} \lambda(y,\lvert x-y\rvert)},\qquad
\lvert x-y\rvert \ge 2\lvert y-y'\rvert\,.
\]
The operator $T$ is said to be a {\em Calder\'on--Zygmund operator}.
We are interested in {\em quantitative} estimates for the operator norm of $T$ on $L^p(\mu)$ for  $1<p<\infty$, and the following hypothesis, together with
kernel assumptions, provides
the essential quantitative information.

\begin{itemize}
\item
{\em Local Testing Condition Hypothesis}.
For given two exponents $p_1,p_2\in (1,\infty)$, there is a constant $\mathbf{T}_{\textup{loc}}$ as follows.
For all  cubes $ Q$ in $\mathbf{R}^n$,
	\begin{equation}\label{l1_testing}
		\int _{Q} \lvert  T \mathbf 1_{Q}\rvert^{p_1} \, d\mu (x) \le \mathbf T_{\mathbf{loc}}^{p_1} \mu( Q)\,, 
		\qquad 
		\int _{Q} \lvert  T ^{\ast}  \mathbf 1_{Q}\rvert^{p_2} \, d\mu (x) \le \mathbf T_{\textup{loc}}^{p_2} 
		\mu(Q)\,.
\end{equation}
\end{itemize}

We provide a novel proof of the following  previously known theorem.
\begin{theorem}\label{t.main} 
Let $T$ be a  Calder\'on--Zygmund operator. 
Fix $1<p_1,p_2<\infty$, $1/p_1+1/p_2\le 1$. Assume  the
following two conditions (1)--(2):
\begin{itemize}
\item[(1)]  $T$ is (a priori) bounded on $L^{p_1}(d\mu)$;
\item[(2)] $T$ satisfies a Local Testing Condition Hypothesis with exponents $p_1$ and $p_2$.
\end{itemize}
Under these assumptions, we have a quantitative norm estimate 
\[\mathbf{T}:= \lVert T\rVert_{L^{p_1}(d\mu)\to L^{p_1}(d\mu)} \lesssim 1+\mathbf T_{\textup{loc}}\,,\]
where the implied constant depends on $n,p_1,p_2,\eta,\mu$.
\end{theorem}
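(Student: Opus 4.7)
The plan is to reduce the operator norm bound to a bilinear estimate and then control the bilinear form through a carefully chosen corona decomposition. First I would dualize: because $1/p_1+1/p_2\le 1$, it suffices to bound $\lvert\langle Tf_1,f_2\rangle_\mu\rvert$ uniformly over $f_1\in L^{p_1}(\mu)$, $f_2\in L^{p_2}(\mu)$ of unit norm (the slack in $1/p_1+1/p_2$ is absorbed via H\"older and the compact support of $\mu$). I would then work with the Nazarov--Treil--Volberg random dyadic grids adapted to the non-homogeneous setting and expand each $f_i$ in the associated Haar/martingale basis $\{\Delta_Q f_i\}$. Averaging over the grids lets me discard the contribution of cubes that are ``bad'' in the NTV sense, reducing the whole estimate to a sum over pairs $(Q_1,Q_2)$ of good cubes of $\langle T\Delta_{Q_1}f_1,\Delta_{Q_2}f_2\rangle_\mu$.

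Next I would implement the corona decomposition, which is the technical heart. For each $i=1,2$, build a stopping family $\mathcal{S}_i$ by declaring $S'\subsetneq S$ to be a child stopping cube as soon as a maximal average $M_\mu f_i$-type quantity on $S'$ exceeds a fixed multiple of its value on $S$. The Carleson packing property of $\mathcal{S}_i$, together with the standard non-homogeneous maximal inequality, yields quasi-orthogonality estimates of the form
\begin{equation*}
\Bigl(\sum_{S\in\mathcal{S}_i} \alpha_S^{p_i}\,\mu(S)\Bigr)^{1/p_i}\lesssim \lVert f_i\rVert_{L^{p_i}(\mu)},
\end{equation*}
where $\alpha_S$ denotes the stopping value on $S$. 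I would then combine the two families into a joint corona indexed by pairs $(S_1,S_2)$ with $S_i\in\mathcal{S}_i$, and partition the sum over good pairs $(Q_1,Q_2)$ according to their minimal common ancestors in each $\mathcal{S}_i$.

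Once the corona is in place, I would split the bilinear form into three pieces: a \emph{separated} part where $Q_1,Q_2$ are of comparable size but far apart; a \emph{diagonal/adjacent} part; and the \emph{inside} part, where (say) $Q_2\subsetneq Q_1$ with $\ell(Q_2)\ll \ell(Q_1)$. The separated piece is handled purely by the kernel estimates \eqref{e.size} and \eqref{e.cancel} together with the upper doubling structure, in a now-standard manner; summing against the corona parameters $\alpha_{S_1},\alpha_{S_2}$ and invoking quasi-orthogonality gives the desired bound. The diagonal part is controlled by the a priori $L^{p_1}$ boundedness assumption (1), which is used precisely to absorb the short-range interactions that cannot be recovered from testing alone.

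The main obstacle, and the place where I expect to depart most substantially from the literature, is the inside term. Following the philosophy of \cite{lv-perfect,1209.4161}, I would avoid the usual paraproduct/Carleson measure machinery by a direct argument: on each stopping tree headed by $S_1\in\mathcal{S}_1$, the function $f_1$ is essentially constant modulo an $L^{p_1}$-controlled error, so $T f_1$ restricted below $S_1$ can be compared directly to $\alpha_{S_1}\cdot T\mathbf{1}_{S_1}$ plus a stopping error. The local testing hypothesis \eqref{l1_testing} then controls the principal part in $L^{p_1}(S_1,\mu)$ by $\mathbf{T}_{\textup{loc}}\,\alpha_{S_1}\,\mu(S_1)^{1/p_1}$, while the stopping error is reabsorbed into a smaller-scale copy of the same bilinear form. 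A dual estimate with $T^*\mathbf{1}_{S_2}$ handles the symmetric inside configuration $Q_1\subsetneq Q_2$; this is where the second testing exponent $p_2$ becomes necessary. The whole procedure is then closed by summing against the joint corona packing bound and invoking a self-improvement/induction on the total bilinear form, so that the constant multiplying $\mathbf{T}$ on the right-hand side can be made strictly less than one and absorbed to the left. The crucial difficulty throughout is that $\mu$ is only upper doubling, so every step needs a substitute for the doubling property; the dominating function $\lambda$ and the Hyt\"onen-style maximal functions provide exactly the replacements needed to make the corona estimates and quasi-orthogonality survive.
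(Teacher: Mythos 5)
Your overall architecture coincides with the paper's: dualization, NTV random grids with a good/bad reduction, a corona/stopping-tree decomposition with quasi-orthogonality, a split into separated, nearby (your ``diagonal/adjacent''), and inside terms, and a direct treatment of the inside term that avoids Carleson embedding. However, two steps are sketched in a way that would not close the argument.

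First, the nearby term. You propose to control the short-range interactions ``by the a priori $L^{p_1}$ boundedness assumption,'' but if you simply estimate this block by the operator norm you get a contribution $C\,\mathbf{T}$ with $C\gtrsim 1$, and the final absorption (which requires the total coefficient of $\mathbf{T}$ to be strictly less than $1$) fails. The point of assumption (1) is only to make $\mathbf{T}$ finite; to use it you must arrange that it is multiplied by a small parameter. In the paper this is achieved by surgery: for comparable, intersecting cubes $P_j$ and $Q_i$ one splits $\langle T\mathbf{1}_{P_j},\mathbf{1}_{Q_i}\rangle$ into separated pieces (kernel size), thin $\upsilon$- and $\epsilon$-boundary regions whose indicator has small $L^t$-norm \emph{on average over the random translations} (so the operator norm enters with factors $\upsilon^{1/t}$ and $\epsilon^{1/t}$, via a contraction-principle argument and a third independent random grid), and an intersecting piece aligned with a common auxiliary grid, which is exactly where the local testing condition $\int_G\lvert T\mathbf 1_G\rvert^{p_1}d\mu\le\mathbf{T}_{\textup{loc}}^{p_1}\mu(G)$ is applied. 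Without some version of this, your scheme has no source for the small constant in front of $\mathbf{T}$, and the ``self-improvement/induction'' you invoke at the end has nothing to iterate on.

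Second, the inside term. Comparing $Tf_1$ below a stopping cube $S$ to $\sigma_1(S)\,T\mathbf{1}_S$ is indeed the paraproduct term (the telescoping identity makes the coefficients bounded), but the testing hypothesis only controls $\lVert\mathbf 1_S T\mathbf 1_S\rVert_{p_1}$, whereas the pairing is against $\Delta_Q f_2$ for $Q$ lying arbitrarily deep inside the stopping tree below $S$. To sum over generations one needs $\lVert\mathbf 1_{S'}(T\mathbf 1_S-c_{S'})\rVert_{p_1}\lesssim(1+\mathbf{T}_{\textup{loc}})\,2^{-t/p_1}\mu(S)^{1/p_1}$ for $t$-th generation stopping descendants $S'$, which requires subtracting off-diagonal constants $T\mathbf 1_{S\setminus S''}(x_{S'})$, using goodness to keep $S'$ away from $\partial S''$, and exploiting the sparseness $\sum_{S'\in\textup{ch}_{\mathcal S_1}(S)}\mu(S')\le\tfrac14\mu(S)$ for geometric decay. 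On the dual side one also needs the inequality $\sum_{S}\lVert P_{2,S}f_2\rVert_{p_2}^{p_2}\lesssim 1$, which fails for general orthogonal martingale projections when $p_2<2$ and must be proved from the stopping construction. Finally, your claim that the stopping error is ``reabsorbed into a smaller-scale copy of the same bilinear form'' is not needed and is not obviously well-founded; in the paper the stopping and error terms are estimated outright by square-function and Stein inequalities together with the off-diagonal decay $2^{-t\eta(1-\gamma)}$ coming from goodness.
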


In the sequel, unless otherwise specified, we  assume that $p_1$ and $p_2$ are in duality:
$p_2=\frac{p_1}{p_1-1}$.

\begin{remark}\label{r.known}
Theorem \ref{t.main} is known and available in the literature.
Indeed, under the assumptions of this theorem, it is straightforward to
verify that $T$ satisfies a `weak boundedness property'
and  `testing conditions', namely for all cubes $Q$ in $\mathbf{R}^n$, and an appropriate $\sigma\ge 1$,
\begin{equation}\label{e.T1_testing}
\bigg\lvert \int_{Q}T\mathbf{1}_Q\,d\mu\bigg\rvert \le \mathbf{T}_{\textup{loc}} \mu(Q)\,,\qquad 
T\mathbf{1} \in \mathrm{BMO}_\sigma^{p_1}(\mu),\qquad T^*\mathbf{1}\in \mathrm{BMO}_\sigma^{p_2}(\mu)\,;
\end{equation}
we refer to
Remark \ref{r.local_T1} for further details.
It remains to apply a non-homogeneous $T1$ theorem, see \cite{NTV1} or \cite[$Tb$ theorem 2]{0809.3097}
for $\lambda(x,r)=r^d$ dominating the measure,
and \cite[Theorem 2.1]{MR2990130} for the general case.
Moreover, by using the last theorem, it is even possible to relax
the integrability conditions in \eqref{l1_testing} to exponents  $p_1=1=p_2$.
Let us also remark that the case of $p_1=2=p_2$ has been addressed
in \cite{MR2956255} with  a 
function $\lambda(x,r)=\max\{\delta(x)^d,r^d\}$ dominating the measure,
where $\delta(x)=\mathrm{dist}(x,\mathbf{R}^n\setminus H)$ for an
an open set $H$ in $\mathbf{R}^n$.
\end{remark}

\begin{remark}
The $p$-independence property of Calder\'on--Zygmund operators,
i.e., if their $L^2$ boundedness is equivalent to their $L^p$ boundedness, 
has been addressed, for instance, in \cite{MR2957235,HuMengYang}.
It is an interesting question, if our proof can be
adapted to obtain a quantitative $p$-independence result for
Calder\'on--Zygmund operators, under an appropriate set of local testing hypotheses.
\end{remark}

\subsection{Structure of the paper}
We  use the  non-homogeneous techniques of \cite{NTV1},  in particular, 
 good and bad cubes are applied in a partially novel manner.
Martingale techniques,
 including $L^p$ estimates for martingale transforms and Stein's
ineguality, 
are fundamental.
These techniques are also applied in a related paper \cite{1201.0648},
from which 
we borrow also some other ideas, e.g.,
 treatments
of `separated' and  `nearby' terms.
Our main technical contribution  is treatment of the most difficult `inside' term
by  a strong definition of goodness and a corona decomposition, avoiding
(a) explicit   construction of paraproduct operators; and  (b)
 Carleson embedding theorems.

The heart of the matter
is  estimation of a
form $\lvert \langle Tf_1,f_2\rangle\rvert $,
where $f_j$'s are perturbed functions, supported
on large dyadic cubes $Q_{j,0}\in \mathcal{D}_j$.
Here $\mathcal{D}_j$ is a random dyadic system.
The perturbation 
 is  simply a projection to good cubes, and results 
in that the usual martingale differences 
$\Delta_Q f_j$ vanish if $Q\subset Q_{j,0}$ is a bad.
After a probabilistic absorption argument, the focus will be on a triangular form
\[
\Big\lvert
\sum_{\substack{P,Q\textup{ good}}}
\mathbf{1}_{\ell Q\le \ell P}\cdot
\langle T\Delta_P f_1, \Delta_Q f_2\rangle\Big\rvert\,,
\]
where always $P\subset Q_{1,0}$ and $Q\subset Q_{2,0}$.
This form is further split into `inside', `separated', and
`nearby' terms.
The analysis of the inside term,
in which $Q$ is deeply inside $P$, is taken up in
sections
\ref{s.inside_core} and \ref{s.remaining}---the argument is transparent,
and our strong definition of {\em goodness of cubes} has a key role.
The construction of paraproducts is avoided, and even Carleson embedding theorems are
not needed; in this we follow \cites{1108.2319,1201.4319}.
We apply a corona decomposition, and the
associated stopping tree 
is  constructed in Section \ref{s.corona}, where
we also record the basic  `quasi-orthogonality' properties.
The separated
 term,
 in which $Q$ is always far away from $P$,
  is analysed in Section \ref{s.separated}, and the (usual) goodness is crucial.
Throughout sections \ref{s.nearby1}--\ref{s.nearby3}, we treat the nearby term,
where cubes are close to each other both in position
and size.  The usual surgery is performed.

\begin{ack}
The authors would like to thank Tuomas Hyt\"onen and Henri Martikainen for indicating
the connection of Theorem \ref{t.main} to the $T1$ theorems that are available  in the literature.
\end{ack}

\section{Preliminaries}

\subsection{Notation}
 The implied constants are allowed to depend
 upon parameters $r,n,p_1,p_2,\eta,\mu$.
The distances are measured in supremum norm, 
$\lvert x\rvert = \rVert x\rVert_\infty$ for $x\in\mathbf{R}^n$.
We denote $L^p=L^p(d\mu)$ if $1\le p\le\infty$.
For a cube $ Q$ and $f\in L^1_{\textup{loc}}$,  write
$ \langle f  \rangle_Q :=\mu(Q)^{-1} \int _{Q} f \; d\mu  $
with the convention $\langle f\rangle_Q=0$ if $\mu(Q)=0$.
The side length of a cube $Q$ is written as $\ell Q$, and
the midpoint as $x_Q$.
The `long distance' between cubes
 $Q$ and $P$ is
 $D(Q,P)=\ell Q + \mathrm{dist}(Q,P)+\ell P$.
 

A `dyadic cube'  is any cube in either random grid $\mathcal{D}_j$ with $j\in \{1,2\}$, 
Section \ref{s.random_grid}.
By $\mathcal{D}_{j,k}$ we denote those dyadic  cubes $Q\in\mathcal{D}_j$
for which $\ell Q=2^k$, $k\in \mathbf{Z}$.
The  dyadic children of $ Q\in\mathcal{D}_j$ are
$\{Q_1,\ldots,Q_{2^n}\}=\textup{ch}(Q)$, its
dyadic parent is $\pi_j Q=\pi_j^1 Q$, and
 $\pi_j^t Q = \pi_j (\pi_j^{t-1}Q)$ for $t\in \{2,3,\ldots\}$.
For $ \mathcal S_j\subset \mathcal D_j$ the family $ \textup{ch} _{\mathcal S_j} (S)
=\textup{ch}^1 _{\mathcal S_j} (S)$ consists of the $ \mathcal S_j$-children of $ S\in \mathcal S_j$: 
the maximal cubes in $ \mathcal S_j$ that are strictly contained in $ S$.
We also denote $\textup{ch}_{\mathcal{S}_j}^0(S)=\{S\}$ and,
for $t>1$, write
$S'\in\textup{ch}^t_{\mathcal{S}_j}(S)$ if $S'\in\textup{ch}_{\mathcal{S}_j}(S'')$ for some $S''\in \textup{ch}^{t-1}_{\mathcal{S}_j}(S)$.
For {\em any} cube $ Q$ which is contained in a cube in $\mathcal{S}_j$,
we take $ \pi _{\mathcal S_j} Q=\pi _{\mathcal S_j}^0 Q$ to be the $ \mathcal S_j$-parent of $ Q$: the minimal 
$\mathcal{S}_j$-cube 
containing $ Q$
(if $Q$ is not contained in a cube in $\mathcal{S}_j$, we set $\pi_{\mathcal{S}_j}Q=\mathbf{R}^n$).
For $t\ge 1$ and any cube $Q$, contained in at least $t+1$ cubes in $\mathcal{S}_j$,  we 
let
$\pi _{\mathcal S_j}^t Q$ to be $\pi_{\mathcal{S}_j}^{t-1} S'$, where 
$\pi_{\mathcal{S}_j}Q\in\textup{ch}_{\mathcal{S}_j}(S')$.

\subsection{Random grids}\label{s.random_grid}
We use the foundational tool of random grids, initiated by Nazarov--Treil--Volberg \cite{MR1909219},
which has in turn been used repeatedly. 
We refer, e.g., to \cite{MR1756958,1201.4319,0911.4387,MR2912709}. 
Throughout the paper, we shall use two random dyadic grids  (systems) $\mathcal {D}_j$, $j\in \{1,2\}$. A third random grid $\mathcal{D}_3$ appears
at the very end.
These are constructed as follows; we refer to \cite{1108.5119} for further details.

The random grids $\mathcal{D}_j$ are parametrized by  sequences $\omega_j\in (\{0,1\}^n)^{\mathbf{Z}}$,
$j\in \{1,2,3\}$,
where we tacitly assume three independent copies
of $(\{0,1\}^n)^{\mathbf{Z}}$.
More precisely, for a cube  $\widehat{Q}\in\widehat{\mathcal{D}}$ in the standard dyadic grid, the
\emph{position} of an $\omega_j$-translated cube is
\begin{equation*}
	Q=\widehat{Q} \dot+ \omega_j := \widehat{Q}+\sum_{k\,:\,2^{-k}< \ell \widehat{Q} } 2^{-k}\omega_{j,k}\,,
\end{equation*}
which is a function of $ \omega_j\in (\{0,1\}^n)^{\mathbf{Z}} $.
A dyadic grid (system)
\[
\mathcal{D}_j=\mathcal{D}(\omega_j)=\{\widehat{Q} \dot+ \omega_j\,:\,\widehat{Q}\in\widehat{\mathcal{D}}\}
\]
is the family  of these $\omega_j$-translated cubes. 
The natural uniform probability measure  $\mathbf{P}_{\omega_j}$  is placed upon the respective copy of
$(\{0,1\}^n)^{\mathbf{Z}}$. 
Each component $\omega_{j,k}$, $k\in\mathbf{Z}$, has an equal probability $2^{-n}$ of taking any of the $2^n$ values, and all components are independent of each other. The expectation
 with respect to $\mathbf{P}_{\omega_j}$ is denoted by $\mathbf{E}_{\omega_j}$. 
We will usually simply write $P$ or $S$ for a cube in $\mathcal{D}_1$,
and $Q$ or $R$ for a cube in $\mathcal{D}_2$,
instead of the heavier notation $ \widehat{Q} \dot+ \omega_j  $ with 
$\widehat{Q}\in\widehat{\mathcal{D}}$.

Choose, once and for all, a constant $\gamma\in (0,1)$ such that 
\begin{equation} \label{e.epsilon}
d\gamma/(1-\gamma)\le \eta/4,\quad  \gamma\le \frac{\eta}{2(d+\eta)}\,,\qquad d=\log_2 C_\lambda\,.
\end{equation} 
Here $\eta$ is the constant appearing in the kernel condition \eqref{e.cancel}.
We  also
denote
\begin{equation}\label{e.theta_def}
\theta(j)=\Big\lceil\frac{\gamma j+r}{1-\gamma}\Big\rceil\quad \text{ for }j=0,1,2,\ldots.
\end{equation}
Throughout  $r\in \mathbf{N}$ should be thought of  as a large integer, 
whose exact value is assigned later.

\subsection{Goodness of cubes}
We  impose a strong definition of goodness: by doing so, we ensure
that good cubes $Q\in \mathcal{D}_1\cup \mathcal{D}_2$ from either system  are always far away from the boundaries of much larger
cubes in {\em either} one of these two systems.

A cube $ Q\in\mathcal{D}_j$ is 
$k$-\emph{bad} for $j,k\in \{1,2\}$ if there is a cube $P\in\mathcal{D}_k$ such that
$\ell P\ge 2^r \ell Q$ and 
$ \textup{dist} (Q, \partial P) \le (\ell Q) ^{\gamma } (\ell P) ^{1- \gamma}$.
Otherwise, $Q$ is $k$-\emph{good}.
The following properties are  known, \cite{1108.5119}.
\begin{enumerate}
\item 
For $\widehat{Q}\in\widehat{\mathcal{D}}$, 
position and
$k$-goodness of $Q=\widehat{Q}\dot+\omega_j$ are independent random variables.
\item The probability
$ \pi _{j,k,\textup{good}} := \mathbf P_{\omega_k}  (\textup{$\widehat{Q} \dot+\omega_j$ is $k$-good})$ is independent of  $ \widehat Q\in\widehat{\mathcal{D}}$.
\item  $  \pi _{j,k,\textup{bad}}:=1- \pi _{j,k,\textup{good}}  \lesssim 2 ^{- \gamma r}$, with  implied constant independent of $r$.
\end{enumerate}
 A cube $ Q\in\mathcal{D}_j$ with $j\in \{1,2\}$ is 
\emph{bad} if it is $k$-bad for some $k\in \{1,2\}$.
Otherwise, we say that $Q$ is \emph{good}. To state this
condition otherwise, if $Q\in\mathcal{D}_j$ is good, we have
inequality
\[
(\ell Q)^{\gamma}(\ell P)^{1-\gamma} < \mathrm{dist}(Q,\partial P)\,,
\]
if $P\in \mathcal{D}_1\cup \mathcal{D}_2$ and $2^r\ell Q\le \ell P$.
 Define bad and good projections by $I= P _{j,\textup{bad}} + P _{j,\textup{good}}$, where 
 \begin{equation*}
 	 P  _{j,\textup{bad}} \phi := \sum_{Q \in \mathcal D_j \;:\;  \textup{$ Q$ is bad}} \Delta _{Q} \phi\,,
	 \qquad \phi\in L^q\quad (1<q<\infty)\,. 
\end{equation*}
Here $ \Delta _Q \phi=\sum_{Q'\in\textup{ch}(Q)}
\{ \langle \phi\rangle_{Q'} - \langle \phi\rangle_Q\} \mathbf{1}_{Q'}$ is the martingale difference
with respect to $\mu$. 
The following proposition is a straightforward modification of
\cite[Proposition 2.4]{1209.4161}.

\begin{proposition}\label{p.bad} For every $j\in \{1,2\}$ and $ 1< q < \infty $  there is a constant $ c_q >0 $ so that 
	\begin{equation}\label{e.bad}
		\mathbf{E}_{\omega_1} \mathbf{E}_{\omega_2}\lVert  P _{j,\textup{bad}} \phi \rVert_{q}^q \lesssim 2 ^{- \gamma r/c_q} \lVert \phi \rVert_{q}^q\,,
\end{equation}
where $\phi\in L^q$ is any function, independent of both random grids $\mathcal{D}_k$ with $k\in \{1,2\}$. Moreover, the implied constant
is independent of $r$.
\end{proposition}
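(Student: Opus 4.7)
The plan is to first handle the case $q=2$ directly through an orthogonality argument, and then to obtain the case $q\neq 2$ by Riesz--Thorin interpolation against a trivial $L^p$ bound that is uniform in $r$.

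For the $L^2$ estimate, I would use orthogonality of the martingale differences $\Delta_Q\phi$ in a fixed grid $\mathcal{D}_j$ to write
\[
\lVert P_{j,\textup{bad}}\phi\rVert_2^2 = \sum_{\widehat{Q}\in\widehat{\mathcal{D}}} \mathbf{1}_{\{\widehat{Q}\dot+\omega_j \text{ is bad}\}}\,\lVert \Delta_{\widehat{Q}\dot+\omega_j}\phi\rVert_2^2\,.
\]
Since ``bad'' means ``$1$-bad or $2$-bad'', one has $\mathbf{1}_{\textup{bad}}\le \mathbf{1}_{1\text{-bad}} + \mathbf{1}_{2\text{-bad}}$. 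Taking $\mathbf{E}_{\omega_1}\mathbf{E}_{\omega_2}$ and using the hypothesis that $\phi$ is independent of the random grids, I observe that $\lVert \Delta_{\widehat{Q}\dot+\omega_j}\phi\rVert_2^2$ depends on $\omega_j$ only through the \emph{position} of $\widehat{Q}\dot+\omega_j$. Property (1) in the list preceding the proposition then makes $\mathbf{1}_{\{\widehat{Q}\dot+\omega_j \text{ is } k\text{-bad}\}}$ and $\lVert \Delta_{\widehat{Q}\dot+\omega_j}\phi\rVert_2^2$ independent random variables for each $k\in\{1,2\}$, so
\[
\mathbf{E}_{\omega_1}\mathbf{E}_{\omega_2}\big[\mathbf{1}_{\{\widehat{Q}\dot+\omega_j \text{ is } k\text{-bad}\}}\lVert \Delta_{\widehat{Q}\dot+\omega_j}\phi\rVert_2^2\big] = \pi_{j,k,\textup{bad}}\cdot \mathbf{E}_{\omega_1}\mathbf{E}_{\omega_2}\lVert \Delta_{\widehat{Q}\dot+\omega_j}\phi\rVert_2^2\,.
\]
Property (3) provides $\pi_{j,k,\textup{bad}}\lesssim 2^{-\gamma r}$, and a second application of martingale orthogonality $\sum_{\widehat{Q}}\lVert \Delta_{\widehat{Q}\dot+\omega_j}\phi\rVert_2^2\le \lVert \phi\rVert_2^2$ gives
\[
\mathbf{E}_{\omega_1}\mathbf{E}_{\omega_2}\lVert P_{j,\textup{bad}}\phi\rVert_2^2 \lesssim 2^{-\gamma r}\lVert \phi\rVert_2^2\,.
\]

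For the general $q\in(1,\infty)$, note that for every fixed realization of $(\omega_1,\omega_2)$ the operator $P_{j,\textup{bad}}$ is a martingale transform with $\{0,1\}$-valued coefficients relative to the filtration generated by $\mathcal{D}_j$. Burkholder's inequality then yields a pointwise-in-$\omega$ bound $\lVert P_{j,\textup{bad}}\phi\rVert_q\le C_q\lVert \phi\rVert_q$ with $C_q$ independent of $r$. Viewing $T:\phi\mapsto P_{j,\textup{bad}}\phi$ as a linear operator from $L^q(\mu)$ into $L^q(\mu\otimes \mathbf{P}_{\omega_1}\otimes \mathbf{P}_{\omega_2})$, I would apply Riesz--Thorin between the $L^2$ decay estimate $\lVert T\rVert_{2\to 2}\lesssim 2^{-\gamma r/2}$ and a uniform $L^p$ bound $\lVert T\rVert_{p\to p}\lesssim 1$, choosing $p$ on the opposite side of $2$ from $q$. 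This extracts $\lVert T\rVert_{q\to q}\lesssim 2^{-\gamma r\theta_q/2}$ for some $\theta_q\in(0,1]$ depending only on $p$ and $q$, and raising to the $q$-th power gives the conclusion with $c_q:=2/(q\theta_q)$.

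The main subtlety is the independence step in the $L^2$ argument. It hinges on the delicate property (1), whereby the coordinates of $\omega_j$ governing the position of $\widehat{Q}\dot+\omega_j$ are disjoint from those governing its distance to boundaries of $\mathcal{D}_k$-cubes of side length $\ge 2^r\ell\widehat{Q}$; everything else in the proof is then standard orthogonality, Burkholder, and interpolation.
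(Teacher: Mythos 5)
Your proposal is correct and follows essentially the same route as the paper: an $L^2$ decay estimate via martingale orthogonality and the independence of position and $k$-badness, a uniform-in-$r$ $L^p$ bound from the martingale transform inequality, and interpolation between the two (the paper invokes Marcinkiewicz where you use Riesz--Thorin, an immaterial difference here since both endpoint bounds are strong type).
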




\begin{proof}
We apply
Marcinkiewicz interpolation theorem to the linear
	operator \[P  _{j,\textup{bad}}:L^q(d\mu)\to L^q(\mathbf{P}_{\omega_1}\otimes \mathbf{P}_{\omega_2}\otimes d\mu)\,.\]
	The projection to bad cubes is a martingale transform: by inequality \eqref{e.classical_martingale}, the following inequality with no decay holds, 
	\begin{equation*}
		\mathbf E_{\omega_1} \mathbf E_{\omega_2}\lVert  P  _{j,\textup{bad}} \phi \rVert_{p} ^{p} 
		\le \sup_{\omega_1,\omega_2}\, \lVert  P  _{j,\textup{bad}} \phi \rVert_{p} ^{p}
		\lesssim  \lVert \phi \rVert_{p} ^{p} \,,\quad 1<p<\infty\,.
\end{equation*}
	Thus,
	it suffices to verify the claimed decay for $ q=2$. To this end, we have 
	by orthogonality of martingale differences,
	\begin{align*}
		\mathbf E_{\omega_1} \mathbf E_{\omega_2} \lVert  P _{j,\textup{bad}} \phi \rVert_2 ^2 
		&=\mathbf E_{\omega_1} \mathbf E_{\omega_2} \sum_{ \widehat{Q}\in\widehat{\mathcal{D}} }  
		\mathbf{1}_{\widehat{Q}\dot+\omega_j\textup{ is bad}}
		\lVert \Delta_{\widehat Q\dot+\omega_j} \phi \rVert_{2} ^2 
		\\&\le  \sum_{k=1}^2	\mathbf E_{\omega_1} \mathbf E_{\omega_2}
			\sum_{ \widehat{Q}\in\widehat{\mathcal{D}} } \mathbf{1}_{\widehat{Q}\dot+\omega_j\textup{ is $k$-bad}} 
		\lVert \Delta_{\widehat Q\dot+\omega_j} \phi \rVert_{2} ^2 \\
				&\le \sum_{k=1}^2 \pi_{j,k,\textup{bad}}\mathbf E_{\omega_1} \mathbf E_{\omega_2} \sum_{ \widehat{Q}\in\widehat{\mathcal{D}} }  
		\lVert \Delta_{\widehat Q\dot+\omega_j} \phi \rVert_{2} ^2 
		\le (\pi _{j,1,\textup{bad}}	 + \pi _{j,2,\textup{bad}})	   \lVert \phi \rVert_{2} ^2\,.
\end{align*}
In the third step, we used Fubini's theorem, linearity of expectation, and
the fact that $\lVert\Delta_{\widehat Q\dot+\omega_j} \phi\rVert_2^2$ and $k$-badness of $\widehat Q\dot+\omega_j$ are independent random variables.
\end{proof}

\subsection{Square function inequalities}

The martingale transform inequality is this, see e.g. \cite{MR744226}.
For all functions $f\in L^p$, and  constants satisfying 
$\sup_{Q\in\mathcal{D}_j} \lvert\varepsilon_Q\rvert\le 1$,
\begin{equation}\label{e.classical_martingale}
	\biggl\lVert  \sum_{Q\in \mathcal D_j} \varepsilon _Q \Delta_Q f
	 \biggr\rVert_{p}  \lesssim
	 \lvert\lvert f\rvert\rvert_{p},\quad 1<p<\infty\,,\quad j\in \{1,2\}\,.
\end{equation}
A consequence of Khintchine's inequality and inequality
\eqref{e.classical_martingale} is the following.
\begin{equation}\label{kahane}
\bigg\lVert \bigg(\sum_{k\in\mathbf{Z}}|\Delta_{j,k} f|^2\bigg)^{1/2}\bigg\rVert_{p}
\lesssim \lVert f \rVert_{p}\,,
\end{equation}
where $f\in L^p$ with $1<p<\infty$, and
$\Delta_{j,k} f = \sum_{Q\in\mathcal{D}_{j,k}} \Delta_Q f$ for $k\in\mathbf{Z}$ and $j\in \{1,2\}$.

We will use the following {\em Stein's inequality},  see e.g. \cite{MR830227}. 
For $1<p<\infty$ and $j\in \{1,2\}$,
\begin{equation}\label{stein}
\bigg\lVert \bigg(  \sum_{k\in \mathbf{Z}} 
\lvert \mathbf{E}_{j,k} f_k \rvert^2\bigg)^{1/2}\bigg\rVert_{p}
\lesssim \bigg\lVert
\bigg( \sum_{k\in \mathbf{Z}} \lvert f_k\rvert^2\bigg)^{1/2}\bigg\rVert_{p}\,,
\end{equation}
where $(f_k)_{k\in\mathbf{Z}}$ is {\em any} sequence
in $L^p(d\mu)$, $\mathbf{E}_{j,k} f= \sum_{Q\in\mathcal{D}_{j,k}}
\mathbf{E}_Q f$, and $\mathbf{E}_Q f=\langle f\rangle_Q \mathbf{1}_Q$.
We don't rely on Fefferman--Stein inequalities
for the vector-valued maximal function.
Stein's inequality is their replacement in the present, non-homogeneous, setting.

\subsection{Off-diagonal estimates}
Here we collect useful off-diagonal estimates.

\begin{lemma}\label{l.off_diagonal}
Let $Q\subset P\subset R$ be cubes in $\mathbf{R}^n$
such that $\ell Q\le \mathrm{dist}(Q,R\setminus P)$.
Then, 
\begin{equation}\label{e.off_diagonal}
\lvert T\mathbf{1}_{R\setminus P}(x)-T\mathbf{1}_{R\setminus P}(x_Q)\rvert
\lesssim \bigg(\frac{\ell Q}{\mathrm{dist}(Q,R\setminus P)}\bigg)^{\eta}\,,\qquad x\in Q\,.
\end{equation}
\end{lemma}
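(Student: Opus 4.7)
The plan is to represent the difference as a single kernel integral and then apply the Hölder estimate on $K$ in the first variable together with a standard dyadic annular decomposition.

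First I would observe that both $x$ and $x_Q$ lie in $Q\subset P$, while the integration occurs over $R\setminus P$, so in particular $x,x_Q\notin\mathrm{supp}(\mathbf 1_{R\setminus P})$ and the kernel representation of $T\mathbf 1_{R\setminus P}$ is valid at both points. Thus
\[
T\mathbf 1_{R\setminus P}(x)-T\mathbf 1_{R\setminus P}(x_Q)=\int_{R\setminus P}\{K(x,y)-K(x_Q,y)\}\,d\mu(y).
\]
To invoke \eqref{e.cancel} with the roles $x\leftrightarrow x,\ x'\leftrightarrow x_Q$, I need to check the separation hypothesis $\lvert x-y\rvert\ge 2\lvert x-x_Q\rvert$. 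Since $x,x_Q\in Q$ and distances are measured in sup norm, $\lvert x-x_Q\rvert\le \ell Q/2$; on the other hand, for $y\in R\setminus P$, $\lvert x-y\rvert\ge \mathrm{dist}(Q,R\setminus P)\ge \ell Q$ by the hypothesis of the lemma, which gives the required separation.

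Applying \eqref{e.cancel} then yields the pointwise bound
\[
\lvert K(x,y)-K(x_Q,y)\rvert\lesssim \frac{(\ell Q)^\eta}{\lvert x-y\rvert^\eta\,\lambda(x,\lvert x-y\rvert)},\qquad y\in R\setminus P.
\]
Setting $D:=\mathrm{dist}(Q,R\setminus P)$, I would decompose $R\setminus P$ into the dyadic annuli
\[
A_k:=\{y\in R\setminus P:2^kD\le \lvert x-y\rvert<2^{k+1}D\},\qquad k=0,1,2,\ldots,
\]
which cover $R\setminus P$ because $\lvert x-y\rvert\ge D$ there. On each $A_k$ the integrand is bounded by $(\ell Q)^\eta\,(2^kD)^{-\eta}\lambda(x,2^kD)^{-1}$ (using that $r\mapsto\lambda(x,r)$ is non-decreasing), while $\mu(A_k)\le \mu(B(x,2^{k+1}D))\le \lambda(x,2^{k+1}D)\le C_\lambda \lambda(x,2^kD)$ by the upper doubling condition.

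Multiplying and summing, the $\lambda$-factors cancel and there remains a geometric series
\[
\sum_{k\ge 0}\frac{(\ell Q)^\eta}{(2^kD)^\eta}\,C_\lambda\lesssim\Big(\frac{\ell Q}{D}\Big)^{\!\eta}\sum_{k\ge 0}2^{-k\eta}\lesssim\Big(\frac{\ell Q}{\mathrm{dist}(Q,R\setminus P)}\Big)^{\!\eta},
\]
which is the claimed bound. No step is really hard here; the only point that requires attention is the verification of $\lvert x-y\rvert\ge 2\lvert x-x_Q\rvert$ so that \eqref{e.cancel} may be applied, and the cancellation of the $\lambda$-factors via the upper doubling condition, which ensures that the non-doubling nature of $\mu$ causes no obstruction.
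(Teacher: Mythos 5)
Your proof is correct and follows essentially the same route as the paper's: apply the Hölder condition \eqref{e.cancel} to the kernel representation of the difference, decompose $R\setminus P$ into dyadic annuli around $x$ at scale $\mathrm{dist}(Q,R\setminus P)$, and use the upper doubling property to cancel the $\lambda$-factors before summing the geometric series. Your explicit verification of the separation hypothesis $\lvert x-y\rvert\ge 2\lvert x-x_Q\rvert$ is a detail the paper leaves implicit, but it is the right check and it goes through exactly as you state.
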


\begin{proof}
The kernel condition
\eqref{e.cancel} applies,
\begin{align*}
LHS\eqref{e.off_diagonal}\le
\int_{R\setminus P} \lvert K(x,y)-K(x_Q,y)\rvert \,d\mu(y)
\le \int_{R\setminus P} \frac{\lvert x-x_Q\rvert^\eta}{\lvert x-y\rvert^\eta \lambda(x,\lvert x-y\rvert)}\, d\mu(y)\,.
\end{align*}
Let us denote 
$\delta:=\mathrm{dist}(Q,R\setminus P)$ and
$A_j = \{y\in\mathbf{R}^n\,:\, 2^j \delta \le \lvert x-y\rvert
< 2^{j+1}\delta\}$ for $j\ge 0$.
Observe that $R\setminus P\subset \cup_{j=0}^\infty A_j$.
Since $A_j\subset B(x,2\lvert x-y\rvert)$ for
each $y\in A_j$, we can bound the last integral by
$C_\lambda(\ell Q)^{\eta}\sum_{j=0}^\infty (2^j\delta)^{-\eta}
\lesssim (\ell Q/\delta)^{\eta}$
as required.
\end{proof}

\begin{remark}\label{r.local_T1}
Let us
verify that the 
a priori boundedness of $T$ on $L^{p_1}$, and
Local Testing Condition Hypothesis,  
together imply the assumptions of a $T1$ theorem; namely,
conditions \eqref{e.T1_testing} with $\sigma = 3$.
The first condition therein is, indeed, a trivial consequence of inequality \eqref{l1_testing}.
Hence, it suffices to verify that $b:= T\mathbf{1}$ satisfies
$b\in \mathrm{BMO}^{p_1}_\sigma(\mu)$,  i.e.,
\begin{equation}\label{e.bmo}
\lVert b\rVert_{\mathrm{BMO}^{p_1}_\sigma(\mu)}:=  \sup_Q \ \bigg\{ \frac{1}{\mu(\sigma Q)} \int_Q \lvert b(x) - \langle b\rangle_Q\rvert^{p_1}\,d\mu(x)\bigg\}^{1/p_1} \lesssim 1+\mathbf{T}_{\textup{loc}}\,,
\end{equation}
where the supremum is taken over all cubes $Q$ in $\mathbf{R}^n$.
Indeed, a completely analogous argument then shows that $T^*\mathbf{1}\in\mathrm{BMO}^{p_2}_\sigma(\mu)$.

In order to verify inequality \eqref{e.bmo}, let us fix a cube $Q$ in which the supremum above is (almost) attained.
Let us then fix a large cube $R$ in $\mathbf{R}^n$, containing both $3Q$ and the compact support of the measure $\mu$. In particular,
$T\mathbf{1} = T\mathbf{1}_R\in L^{p_1}$, and we can estimate
\begin{align*}
\lVert b\rVert_{\mathrm{BMO}^{p_1}_\sigma(\mu)}^{p_1} &\lesssim \frac{1}{\mu(3Q)}\int_Q \lvert T\mathbf{1}_{R}(x)  - T\mathbf{1}_{R\setminus 3Q}(x_Q)\rvert^{p_1}\,d\mu(x)\\
&\lesssim \frac{1}{\mu(3Q)}\int_Q \lvert T\mathbf{1}_{3Q}\rvert^{p_1}\,d\mu  + \frac{1}{\mu(3Q)}\int_Q \lvert T\mathbf{1}_{R\setminus 3Q}  - T_{R\setminus 3Q}(x_Q)\rvert^{p_1}\,d\mu\,.
\end{align*}
By inequality \eqref{l1_testing}, the first term in the last line is dominated by $\mathbf{T}_{\textup{loc}}^{p_1}$. And
by Lemma \ref{l.off_diagonal}, the last term is seen to be bounded by $\lesssim 1$.
\end{remark}

In the following two lemmata, we write
$D(Q,P)/\ell P \sim 2^u$ if
$2^{u} < {D(Q,P)}/\ell P \le 2^{u+1}$.

\begin{lemma}\label{l.s_exists_s}
Suppose 
$P\in\mathcal{D}_{1,k}$ and  $Q\in\mathcal{D}_{2,k-m}$
is a good cube
 such that $D(Q,P)/\ell P\sim 2^u$, where
 $k\in\mathbf{Z}$ and $u,m\in \mathrm{N}_0$.
Then, we have $Q\subset \pi^{u+\theta(u+m)}_1 P$.
\end{lemma}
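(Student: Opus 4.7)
The plan is to apply the 1-goodness of $Q$ to a sufficiently large ancestor of $P$ in $\mathcal{D}_1$ and show that $Q$ cannot escape it. Set $t := u + \theta(u+m)$ and $P^* := \pi_1^t P \in\mathcal{D}_1$, so $\ell P^* = 2^t \ell P$ and $P\subset P^*$. First I would verify the size hypothesis for goodness: since $\ell Q = 2^{-m}\ell P$, we have $\ell P^*/\ell Q = 2^{t+m}$, and the definition \eqref{e.theta_def} gives $t+m\ge \theta(u+m)\ge \theta(0)\ge r$ (assuming $r$ is a positive integer), so $\ell P^*\ge 2^r \ell Q$. Because $Q$ is good (in particular $1$-good) and $P^*\in\mathcal{D}_1$, the definition of goodness yields
\begin{equation*}
\mathrm{dist}(Q,\partial P^*) > (\ell Q)^{\gamma}(\ell P^*)^{1-\gamma}.
\end{equation*}

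The second step is a straightforward arithmetic check of the lower bound on the right-hand side. A direct computation gives
\begin{equation*}
(\ell Q)^{\gamma}(\ell P^*)^{1-\gamma} = \ell P\cdot 2^{(1-\gamma)t - \gamma m},
\end{equation*}
and plugging in $t = u+\theta(u+m)$ together with $\theta(u+m)\ge (\gamma(u+m)+r)/(1-\gamma)$ produces $(1-\gamma)t - \gamma m \ge u + r \ge u+1$. Combined with the hypothesis $D(Q,P)\le 2^{u+1}\ell P$ and the trivial bound $\mathrm{dist}(Q,P)\le D(Q,P)$, this gives
\begin{equation*}
(\ell Q)^{\gamma}(\ell P^*)^{1-\gamma} \ge 2^{u+1}\ell P \ge D(Q,P) \ge \mathrm{dist}(Q,P).
\end{equation*}

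The conclusion follows by contradiction. Suppose $Q\not\subset P^*$. Either $Q\cap P^*\neq\emptyset$, in which case connectedness of the cube $Q$ forces $Q$ to meet $\partial P^*$, so $\mathrm{dist}(Q,\partial P^*)=0$, which is incompatible with the strict positive lower bound from goodness; or $Q\cap P^*=\emptyset$, in which case $P\subset P^*$ gives $\mathrm{dist}(Q,\partial P^*) = \mathrm{dist}(Q, P^*)\le \mathrm{dist}(Q,P)$, contradicting the chain of inequalities in the previous step together with the goodness inequality. In either scenario we obtain a contradiction, and therefore $Q\subset P^* = \pi_1^{u+\theta(u+m)}P$.

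The only mildly delicate point is the arithmetic in the second paragraph, where one must keep track of the interplay between the parameters $\gamma$, $r$, $u$, $m$ chosen in \eqref{e.epsilon} and \eqref{e.theta_def}; everything else is a geometric argument for dyadic cubes, and no kernel estimates, measure theory, or random-grid probabilities enter the proof.
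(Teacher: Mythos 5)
Your proof is correct and follows essentially the same route as the paper: apply the $1$-goodness of $Q$ to the ancestor $\pi_1^{u+\theta(u+m)}P$, use the dichotomy that a good cube either lies inside or outside this ancestor, and rule out the outside case by comparing $\mathrm{dist}(Q,\partial \pi_1^t P)\le D(Q,P)\le 2^{u+1}\ell P$ against the goodness lower bound $(\ell Q)^{\gamma}(\ell \pi_1^t P)^{1-\gamma}$. The arithmetic verification $(1-\gamma)t-\gamma m\ge u+r$ is exactly the "elementary manipulation" the paper leaves implicit, and your explicit treatment of the straddling case ($Q\cap P^*\neq\emptyset$ forcing $\mathrm{dist}(Q,\partial P^*)=0$) correctly fills in why the dichotomy holds.
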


\begin{proof}
Denote  $t=u+\theta(u+m)\ge r$. 
By goodness, either $Q\subset \pi^t P$ or $Q\subset \mathbf{R}^n\setminus \pi^t P$. 
In the former case, we are done.
In the latter case, we obtain a contradiction. Indeed, by goodness,
\begin{align*}
(\ell Q)^\gamma (\ell \pi^t P)^{1-\gamma} <\mathrm{dist}(Q,\partial \pi^t P)
=\mathrm{dist}(Q,\pi^t P)\le D(Q,P)\le 2^{u+1}\ell P\,.
\end{align*}
Substituting $\ell Q=2^{k-m}$ and $\ell P =2^k$ yields $u+\theta(u+m)=t < u+\theta(u+m)$ after elementary manipulations.
This is
a contradiction.
\end{proof}

\begin{lemma}\label{l.s_estimate}
Suppose that $P$ and $Q$
are
as in Lemma \ref{l.s_exists_s}.
 Assume also that
$\ell Q\le \mathrm{dist}(Q,P)$.
 Then, by denoting $S:=\pi^{u+\theta(u+m)}_1P$, 
 \begin{equation}\label{e.kineq}
 \lvert K(x,y)-K(x_Q,y)\rvert \lesssim \frac{2^{-\eta(u+m)/4}}{\mu(S)},\qquad
 (x,y)\in Q\times P\,.
 \end{equation}
\end{lemma}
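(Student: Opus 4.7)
The plan is to start from the kernel smoothness \eqref{e.cancel}, convert the factor $\lambda(x,\lvert x-y\rvert)$ into $\mu(S)$ via upper doubling, and then lower bound $\lvert x-y\rvert$ by combining the hypothesis $\ell Q\le \mathrm{dist}(Q,P)$ with the goodness of $Q$. The parameter choice \eqref{e.epsilon} is what makes the final arithmetic balance.

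For $(x,y)\in Q\times P$ we have $\lvert x-x_Q\rvert\le \ell Q/2\le \mathrm{dist}(Q,P)/2\le \lvert x-y\rvert/2$, so \eqref{e.cancel} applies and yields
\[
\lvert K(x,y)-K(x_Q,y)\rvert\le \frac{(\ell Q/2)^{\eta}}{\lvert x-y\rvert^{\eta}\,\lambda(x,\lvert x-y\rvert)}.
\]
By Lemma \ref{l.s_exists_s} and since $S$ is an ancestor of $P$, both $Q$ and $P$ sit inside $S$, so $\lvert x-y\rvert\le \ell S$ and $\mu(S)\le \lambda(x,\ell S)$ (using $S\subset B(x,\ell S)$ in the supremum norm). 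Iterating $\lambda(x,r)\le C_\lambda\lambda(x,r/2)$ approximately $\log_2(\ell S/\lvert x-y\rvert)$ times and using $C_\lambda=2^{d}$ I would obtain
\[
\frac{1}{\lambda(x,\lvert x-y\rvert)}\lesssim \frac{(\ell S/\lvert x-y\rvert)^{d}}{\mu(S)},
\]
and hence the intermediate bound
\[
\lvert K(x,y)-K(x_Q,y)\rvert\lesssim \frac{(\ell Q)^{\eta}(\ell S)^{d}}{\lvert x-y\rvert^{\eta+d}\,\mu(S)}.
\]

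I would next lower bound $\lvert x-y\rvert\ge \mathrm{dist}(Q,P)$ in three sub-regimes. When $u\ge 2$, the estimates $D(Q,P)\ge 2^{u}\ell P$ and $\ell Q\le \ell P$ yield $\mathrm{dist}(Q,P)\ge 2^{u-1}\ell P$. When $u\in\{0,1\}$ and $m\ge r$, the cube $P\in\mathcal{D}_1$ satisfies $\ell P\ge 2^{r}\ell Q$, so $Q\cap P=\emptyset$ gives $\mathrm{dist}(Q,\partial P)=\mathrm{dist}(Q,P)$ and the goodness of $Q$ produces $\mathrm{dist}(Q,P)>(\ell Q)^{\gamma}(\ell P)^{1-\gamma}$. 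When $u\in\{0,1\}$ and $m<r$, I rely only on the standing hypothesis $\mathrm{dist}(Q,P)\ge \ell Q$; here $u+m\le 1+r$, so $2^{-\eta(u+m)/4}\sim 1$ and it suffices to verify a bound of order $1/\mu(S)$, which is immediate from the intermediate estimate.

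Finally, substituting $\ell Q=2^{-m}\ell P$ and $\ell S=2^{u+\theta(u+m)}\ell P$ cancels all powers of $\ell P$ and leaves an exponent of $2$ that is a linear combination of $u$, $m$, and $d\theta(u+m)$. From \eqref{e.theta_def} and the inequality $d\gamma/(1-\gamma)\le \eta/4$ in \eqref{e.epsilon} I get $d\theta(u+m)\le \eta(u+m)/4+O(1)$. In the first regime the exponent becomes $-\eta(u+m)+d\theta(u+m)+O(1)\le -3\eta(u+m)/4+O(1)$; in the second regime, writing $\alpha:=\eta(1-\gamma)-d\gamma\ge 3\eta(1-\gamma)/4$, the exponent is $-m\alpha+du+d\theta(u+m)$, which, since $u\le 1$ and $\gamma$ is small (again by \eqref{e.epsilon}), reduces to $-\eta(u+m)/4+O(1)$. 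In every case \eqref{e.kineq} follows.

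The main obstacle is this closing bookkeeping step. The required decay $2^{-\eta(u+m)/4}$ has to pay simultaneously for (a) the growth $2^{d\theta(u+m)}$ incurred when passing from $\lambda(x,\lvert x-y\rvert)$ to $\mu(S)$ through many doublings (a loss of essentially $2^{\eta(u+m)/4}$) and (b) the possibly small lower bound on $\mathrm{dist}(Q,P)$ supplied by goodness in the critical sub-regime. That these two losses are exactly absorbable by the kernel gain $(\ell Q/\lvert x-y\rvert)^{\eta}$ is precisely the content of the smallness condition \eqref{e.epsilon} on $\gamma$.
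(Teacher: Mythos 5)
Your proof is correct in structure and follows essentially the same route as the paper: apply \eqref{e.cancel}, pass from $\lambda(x,\lvert x-y\rvert)$ to $\mu(S)$ by iterated upper doubling (the paper packages this as a choice of a dilation exponent $\kappa$ per case), and lower bound $\lvert x-y\rvert$ via $D(Q,P)\sim 2^u\ell P$ for large $u$ and via goodness (the paper's inequality \eqref{e.useful}) for $u\le 1$; your third sub-regime $m<r$ is absorbed in the paper's case $\ell P\ge\mathrm{dist}(Q,P)$ through the constant $c=2^{r(1-\gamma)}$.

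One small arithmetic point in your regime (ii): the bound $\alpha=\eta(1-\gamma)-d\gamma\ge 3\eta(1-\gamma)/4$, which uses only the first inequality of \eqref{e.epsilon}, gives $-3m\eta(1-\gamma)/4+\eta m/4+O(1)$, and this is $\le-\eta m/4+O(1)$ only when $\gamma\le 1/3$ --- which \eqref{e.epsilon} does not guarantee when $d<\eta/2$. The clean fix is to use the \emph{second} inequality of \eqref{e.epsilon} here: $\alpha=\eta-(d+\eta)\gamma\ge\eta/2$, so the exponent is $\le-m\eta/2+\eta(u+m)/4+O(1)\le-\eta(u+m)/4+O(1)$, exactly as in the paper, which indeed invokes ``both of the inequalities \eqref{e.epsilon}'' at this step.
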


\begin{proof}
By inequalities \eqref{e.cancel} and $\ell Q\le \mathrm{dist}(Q,P)$,
we obtain
$LHS\eqref{e.kineq} \le  \alpha \cdot \beta$
where $\mathbf{\alpha}=C_\lambda^\kappa (\ell Q)^\eta/\mathrm{dist}(Q,P)^{\eta}$ and $\mathbf{\beta}=1/\mu( B(x, 2^\kappa \lvert x-y\rvert))$ with $\kappa$
specified in the two case studies.

Case $\ell P <\mathrm{dist}(Q,P)$. Choose $\kappa = 2 +\theta(u+m)$.
Observe the inequality $2^{u+k}< 4\mathrm{dist}(Q,P)$. 
Combined with Lemma \ref{l.s_exists_s} this
implies a relation $S\subset B(x,2^\kappa \lvert x-y\rvert)$ and,
in particular, that
 $\mathbf{\beta}\le \mu(S)^{-1}$. 
The  inequality $2^{u+k}< 4\mathrm{dist}(Q,P)$, followed by $C_\lambda^\kappa=2^{d\kappa}$ and
\eqref{e.epsilon}, shows that 
$\mathbf{\alpha}\lesssim 2^{d\theta(u+m)-\eta(u+m)} \lesssim 2^{-\eta(u+m)/4}$.

Case $\ell P \ge \mathrm{dist}(Q,P)$. Choose $\kappa\in\mathbf{N}$ in such a way that
\[
2^{\kappa-1} <  \frac{c \ell S}{(\ell Q)^{\gamma} (\ell P)^{1-\gamma}}\le 2^{\kappa}\,,\qquad
c = 2^{r(1-\gamma)}\,.
\]
A useful consequence of goodness is this.
\begin{equation}\label{e.useful}
\mathrm{dist}(Q,P)> (\ell Q)^{\gamma}(\ell P)^{1-\gamma}/c\,.
\end{equation}
Lemma \ref{l.s_exists_s} and inequality \eqref{e.useful} yield $S\subset B(x,2^\kappa \lvert x-y\rvert)$, hence $\mathbf{\beta}\le \mu(S)^{-1}$.
Inequality \eqref{e.useful} also allows us to estimate
\begin{align*}
\mathbf{\alpha}\lesssim 2^{d\kappa}  \bigg(\frac{\ell Q}{\ell P}\bigg)^{\eta(1-\gamma)}
\lesssim 2^{d(m+u+\theta(u+m)) - m(d+\eta)(1-\gamma)}\lesssim 2^{-\eta(u+m)/4}\,.
\end{align*}
In the last step, we used 
the fact that $u\le 1$ and
both of the inequalities \eqref{e.epsilon}. 
\end{proof}

%

\section{Perturbations and a basic decomposition}\label{s.perturbations}

Let us denote $\mathbf{T}:=\lVert T\rVert_{L^{p_1}\to L^{p_1}}$.
We fix functions  $\widetilde{f}_j\in L^{p_j}(d\mu)$, $j=1,2$,
supported in $\mathrm{supp}(\mu)$, and satisfying
$\mathbf{T} \le 2\lvert \langle T \widetilde{f}_1,\widetilde{f}_2\rangle\rvert$
 and $\lVert \widetilde{f}_1\rVert_{p_1} = 1=\lVert \widetilde{f}_2\rVert_{p_2}$.

 For almost every pair $\{\mathcal{D}_j\,:\, j\in \{1,2\} \}$ we will
 define certain perturbations $f_j=f_j(\widetilde{f}_j,\mathcal{D}_1,\mathcal{D}_2)$ of
 functions  $\widetilde{f}_j$.
The role of these perturbations is indicated by following proposition.

\begin{proposition}\label{p.sharp} 
Under assumptions of Theorem \ref{t.main}, the following statement holds
for a fixed $t>p_1\vee p_2$. 
For every sufficiently large $r\in\mathbf{N}$ and every $\epsilon,\upsilon\in (0,1)$, 
	\begin{gather}\label{e.sharp<eta} 
		\mathbf{E}_{\omega_1}\mathbf{E}_{\omega_2}\lVert\widetilde  f_j- f_j  \rVert_{p_j}^{p_j}   \le c2^{-\gamma r/c}  \,, \qquad j=1,2\,, 
		\\  \label{e.sharpIP}
		\mathbf{E}_{\omega_1}\mathbf{E}_{\omega_2}\bigl\lvert \langle T f_1,  f_2\rangle\bigr\rvert \le  
		C(r,\upsilon,\epsilon)(1+\mathbf{T}_{\textup{loc}})
+\big(C(r,\upsilon)\epsilon^{1/t} + C(r)\upsilon^{1/t}\big)\mathbf{T}\,.
\end{gather}
Aside from the  parameters indicated, constants
$c$, $C(r,\upsilon,\epsilon)$, $C(r,\upsilon)$, and $C(r)$ are also allowed to depend
upon $n,p_1,p_2,\eta,\mu$.
\end{proposition}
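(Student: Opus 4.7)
I would define the perturbation $f_j$ as the projection of $\widetilde f_j$ onto the closed span of martingale differences $\{\Delta_Q \widetilde f_j : Q \in \mathcal{D}_j,\ Q \subset Q_{j,0},\ Q \text{ good}\}$, where $Q_{j,0}\in \mathcal{D}_j$ is a random dyadic cube large enough to contain $\mathrm{supp}(\mu)$. Because goodness is a joint condition on both grids $\mathcal{D}_1,\mathcal{D}_2$, the function $f_j$ depends on $\widetilde f_j$ and on both random dyadic systems, as advertised.

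\textbf{Proof plan for \eqref{e.sharp<eta}.} The difference $\widetilde f_j - f_j$ decomposes into the bad-projection part $P^{Q_{j,0}}_{j,\text{bad}}\widetilde f_j$ plus a coarse-scale remainder supported at scales $\ge \ell Q_{j,0}$. The coarse-scale remainder is bounded by a $\mu$-dependent constant (absorbed into $c$), while Proposition \ref{p.bad} applied to the bad projection supplies the required $2^{-\gamma r/c}$ decay.

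\textbf{Proof plan for \eqref{e.sharpIP}.} Expand in martingale differences over good cubes to obtain
\[
\langle Tf_1,f_2\rangle = \sum_{P,Q\text{ good}} \langle T\Delta_P f_1,\Delta_Q f_2\rangle,
\]
and symmetrize to the triangular form $\ell Q\le \ell P$ displayed in the introduction. I would then partition the sum into three geometric regimes: an \emph{inside} regime where $Q$ sits deep inside an ancestor of $P$; a \emph{separated} regime where $\mathrm{dist}(Q,P)\gg \ell P$; and a \emph{nearby} regime where $P$ and $Q$ are comparable both in size and in position. The thresholds delimiting these regimes would be governed by $\epsilon$ (separated vs.\ inside) and $\upsilon$ (nearby vs.\ far).

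The separated regime will be bounded by combining the off-diagonal kernel estimates of Lemmas \ref{l.off_diagonal} and \ref{l.s_estimate} with goodness, Stein's inequality \eqref{stein}, and H\"older; summing geometrically produces the absorbable term $C(r,\upsilon)\epsilon^{1/t}\mathbf{T}$ (Section \ref{s.separated}). The nearby regime is treated by surgery onto a common enlarged cube $R$ containing $P\cup Q$, where the local testing hypothesis \eqref{l1_testing} applied to $T\mathbf{1}_R$ and $T^*\mathbf{1}_R$ produces the principal $C(r,\upsilon,\epsilon)(1+\mathbf{T}_{\text{loc}})$ contribution, with a residual absorbable piece $C(r)\upsilon^{1/t}\mathbf{T}$ (Sections \ref{s.nearby1}--\ref{s.nearby3}). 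The genuine obstacle - and the main novelty of the paper - is the inside regime, where the classical approach via paraproducts and a Carleson embedding theorem is awkward in the non-homogeneous setting. Following \cites{lv-perfect,1209.4161}, I would instead construct in Section \ref{s.corona} a corona decomposition $\mathcal{S}_j\subset \mathcal{D}_j$ adapted simultaneously to the maximal averages of $\widetilde f_1$ and $\widetilde f_2$, and exploit the strong definition of goodness - which forces stopping children in $\textup{ch}_{\mathcal{S}_j}(S)$ to sit deep within $S$ - to pair $\Delta_P f_1$ with $\Delta_Q f_2$ locally on each corona block. Local testing then provides $\mathbf{T}_{\text{loc}}$ directly, and summing over the stopping tree via its quasi-orthogonality inequalities aggregates everything into the desired $C(r,\upsilon,\epsilon)(1+\mathbf{T}_{\text{loc}})$ bound; this step is where the real technical work lies.
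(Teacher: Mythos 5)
Your overall architecture matches the paper's: projection onto good cubes, reduction to a triangular form, the partition into inside/separated/nearby, a corona decomposition replacing paraproducts and Carleson embedding for the inside term, and surgery for the nearby term. But there are two concrete problems.

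First, your definition of $f_j$ and your treatment of \eqref{e.sharp<eta} do not work as stated. If $f_j$ is only the span of good martingale differences, then $\mu$-a.e.\ one has $\widetilde f_j - f_j = \mathbf{E}_{Q_{j,0}}\widetilde f_j + P_{j,\textup{bad}}\widetilde f_j$, and the coarse term $\mathbf{E}_{Q_{j,0}}\widetilde f_j$ is an $r$-independent quantity that cannot be ``absorbed into $c$'': the right-hand side $c2^{-\gamma r/c}$ tends to $0$ as $r\to\infty$, and the decay is essential, since in the absorption argument the error $|\langle T(\widetilde f_1-f_1),\widetilde f_2\rangle|\lesssim \mathbf{T}\lVert\widetilde f_1-f_1\rVert_{p_1}$ must be a small multiple of $\mathbf{T}$. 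The paper fixes this by \emph{including} the average $\langle\widetilde f_j\rangle_{Q_{j,0}}\mathbf{1}_{Q_{j,0}}$ in the definition of $f_j$; then, since $\mathrm{supp}(\mu)\subset Q_{j,0}$ forces $\Delta_Q\widetilde f_j=0$ $\mu$-a.e.\ for $Q\not\subset Q_{j,0}$, one gets $\widetilde f_j-f_j = P_{j,\textup{bad}}\widetilde f_j$ exactly and Proposition \ref{p.bad} applies. The price is that the expansion of $\langle Tf_1,f_2\rangle$ then carries the two coarse terms $\langle T\mathbf{E}_{Q_{1,0}}f_1,f_2\rangle$ and $\langle T\sum_P\Delta_Pf_1,\mathbf{E}_{Q_{2,0}}f_2\rangle$, which are controlled by the testing hypothesis and contribute to the $\mathbf{T}_{\textup{loc}}$ term; your exact expansion $\sum_{P,Q\text{ good}}\langle T\Delta_Pf_1,\Delta_Qf_2\rangle$ omits them.

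Second, you misplace the roles of $\epsilon$ and $\upsilon$. The inside/separated/nearby partition is governed entirely by the goodness parameter $r$ (via $2^r\ell Q<\ell P$, $\ell Q\le\mathrm{dist}(Q,P)$, etc.), not by $\epsilon$ or $\upsilon$; using $\epsilon$ as the separated/inside threshold would decouple the geometry from the probabilistic bad-cube estimates, which are tied to $r$. In the paper, the separated term is bounded by an absolute constant (Proposition \ref{p.separated}), with no $\epsilon^{1/t}\mathbf{T}$ contribution; both absorbable terms $C(r,\upsilon)\epsilon^{1/t}\mathbf{T}$ and $C(r)\upsilon^{1/t}\mathbf{T}$ arise from the nearby term, where $\upsilon$ parametrizes boundary collars of $P_j$ and $Q_i$ and $\epsilon$ parametrizes boundary collars of a third, independent random grid $\mathcal{D}_3$ introduced for the surgery; the probabilistic gain comes from averaging over the positions of these grids. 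Your description of the inside term (corona decomposition, strong goodness, quasi-orthogonality, local testing) is consistent with Sections \ref{s.corona}--\ref{s.remaining}, but as written it remains a plan rather than a proof of the key estimates \eqref{e.subset_ineq} and \eqref{e.subset_ineqII}.
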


Proposition \ref{p.sharp}  and
an absorption argument provide a proof of Theorem \ref{t.main}. 
Hence, we are left with proving this proposition.
During this section, we select
functions $f_j$ by using projections to good cubes, and then begin
with the analysis of the resulting  form $\bigl\lvert \langle T f_1,  f_2\rangle\bigr\rvert $.

 \subsection{Perturbations of $\widetilde{f}_j$} 
For $j\in \{1,2\}$ we denote by $Q_{j,0}$  a  cube in $\mathcal{D}_j=\mathcal{D}(\omega_j)$, containing the support $\mathrm{supp}(\mu)$ of
the measure $\mu$. Such a cube exists almost surely with respect
to $\omega_j$, \cite[Lemma 2.8]{1201.0648}. In the sequel, we will
restrict ourselves to such sequences $\omega_j$.
 Let $ \mathcal {G}_j$ be the family of all good cubes  in $\mathcal{D}_j$ that are contained in $Q_{j,0}$,
 and denote $\mathcal{G}_{j,k} = \mathcal{D}_{j,k}\cap \mathcal{G}_j$
 for  $k\in\mathbf{Z}$.
 
 We define approximates of  the functions $\widetilde{f}_j$ to be
 the following perturbations,
  \begin{equation*}
  	  f _{j} := \langle \widetilde f _{j} \rangle_{Q_{j,0}} \mathbf 1_{Q_{j,0}} 
  	  + \sum_{\substack{Q \in \mathcal G_j }} \Delta_Q \widetilde f_j\,,\qquad j=1,2 \,. 
\end{equation*}
Recall the fact that the support of $\mu$ is contained in $Q_{j,0}$. Therefore
$\Delta_Q \widetilde{f}_j = 0$ almost everywhere w.r.t. $\mu$ if $Q\in\mathcal{D}_j$ is not contained in $Q_{j,0}$.
Hence, in the view of Proposition~\ref{p.bad}, 
we have
\begin{equation}\label{e.f_diff_small}
\mathbf{E}_{\omega_1}\mathbf{E}_{\omega_2} \lVert \widetilde{f}_j - f_j\rVert_{p_j}^{p_j} 
=\mathbf{E}_{\omega_1}\mathbf{E}_{\omega_2}\lVert P_{\textup{j,bad}} \widetilde{f}_j\rVert_{p_j}^{p_j}
\le
c2^{-\gamma r/c}\,.
\end{equation}
This is inequality \eqref{e.sharp<eta}.

\subsection{Decomposition of the bilinear form}\label{s.decomposition}

During the course of the remaining sections,
we prove inequality \eqref{e.sharpIP}, which then
completes the proof of  Theorem \ref{t.main}.

By using the facts that $f_j = f_j\mathbf{1}_{Q_{j,0}}$  and $\Delta_R f_j = 0$ if $R\subset Q_{j,0}$ is a bad cube, we easily
find that an expansion of the bilinear form is
\begin{equation}\label{expansion}
	\langle T f_1,f_2 \rangle = \langle T\mathbf{E}_{Q_{1,0}} f_1, f_2\rangle 
	+ \langle T\sum_{P\in\mathcal{G}_1} \Delta_P f_1,\mathbf{E}_{Q_{2,0}} f_2\rangle
	+ \sum_{(P,Q)\in\mathcal{G}_1\times \mathcal{G}_2} \langle T \Delta_P f_1,\Delta_Q f_2\rangle.
\end{equation}
Using the assumptions and inequality \eqref{e.classical_martingale}, 
it is straightforward to verify that
\begin{align*}
|\langle T\mathbf{E}_{Q_{1,0}} f_1,f_2\rangle| +
|\langle T\sum_{P\in\mathcal{G}_1} \Delta_P f_1,\mathbf{E}_{Q_{2,0}} f_2\rangle|
\lesssim \mathbf{T}_{\textup{loc}}\lVert f_1\rVert_{p_1} \rVert f_2\rVert_{p_2}\lesssim \mathbf{T}_{\textup{loc}}\,.
\end{align*}
The last term in the right hand side
of \eqref{expansion} remains. This main term
is further split into  dual triangular sums, one
of which is the sum over 
$ (P,Q) \in \mathcal G_1 \times \mathcal G_2$ such that $ \ell P\ge \ell Q$. 
This sum will be our main point of interest, and we only remark
that the dual triangular sum, associated with cubes $\ell P<\ell Q$, is estimated in a similar manner.

The family $\{(P,Q)\in \mathcal{G}_1\times\mathcal{G}_2\,:\, \ell P\ge \ell Q\}$ is partitioned into 
three  subfamilies:
\begin{gather*}
	\mathcal P _{\textup{inside}} := 
	\{ (P,Q) \in \mathcal G_1 \times \mathcal G_2 
	\;:\;  Q \subset P\text{ and }2^r \ell Q < \ell P\}\,;
	\\
	\mathcal P _{\textup{separated}} := 
	\{ (P,Q) \in \mathcal G_1 \times \mathcal G_2 
	\;:\; \ell Q\le \ell P\text{ and } \ell Q\le  \mathrm{dist}(Q,P)\}\,;
	\\
	\mathcal P _{\textup{nearby}} := 
	\{ (P,Q) \in \mathcal G_1 \times \mathcal G_2  \;:\; 2 ^{-r} \ell P \le \ell Q \le \ell P
	\text{ and }\mathrm{dist}(Q,P)<\ell Q  \}\,.
\end{gather*}
The fact that this is a partition relies on the goodness of $Q$. We refer to \cite[Section 13]{1201.0648}
for further details.
The sums over these collections of cubes are handled separately.
Let us denote 
\[
\mathbf{B}_{\star} (f_1, f_2) = \sum_{(P,Q)\in\mathcal{P}_\star} \langle T\Delta_P f_1, \Delta_Q f_2 \rangle\,,
\qquad \star\in \{ \textup{inside},  \textup{separated}, \textup{nearby}\}\,.
\]
The analysis of the (most difficult) inside term is performed within
sections
\ref{s.inside_core} and \ref{s.remaining}.
It relies on a corona decomposition, and the
associated stopping tree is first constructed in Section \ref{s.corona}.
The separated term is analysed in a standard manner in Section \ref{s.separated}.
Finally, throughout sections \ref{s.nearby1}--\ref{s.nearby3}, we treat the nearby terms
via surgery.

 \section{A stopping tree construction}\label{s.corona}
 
 A stopping tree construction is used in the analysis of the inside-term.
 
 \smallskip

For $j\in \{1,2\}$, let us
define a stopping tree
$\mathcal S_j\subset \mathcal{D}_j$ and
a  function $ \sigma_j \;:\; \mathcal S_j \mapsto \mathbf{R} _+ $ as follows.
Take the maximal good $\mathcal{D}_j$-cubes  
$Q\subset Q_{j,0}$ in $\mathcal S_j$, and define  $ \sigma_j (Q) := \langle \lvert  f_j\rvert  \rangle _{Q} $ for these maximal cubes.
At inductive stage, if $ S\in \mathcal S_j$ 
is a minimal cube, we consider
the maximal $\mathcal{D}_j$-cubes  $ Q\subsetneq S$ subject to 
both of the conditions (1)--(2):
\begin{itemize}
\item[(1)]
 $ \langle \lvert  f_j\rvert  \rangle _{Q} > 4 \sigma_j (S)$;
 \item[(2)]
Either  $Q$ or $\pi_j Q$ is a good cube.
 \end{itemize}
We
add these cubes $Q$ to the stopping tree  $ \mathcal S _{j}$, and 
 define $ \sigma_j (Q):= \langle\lvert  f_j\rvert  \rangle _{Q}  $ for each of them.
%
%
%
%

\begin{remark}\label{r.sigma_estimate}
Condition (2), imposed
in the construction of stopping trees,
will be useful to us in many occasions.
A minor side effect is that we can rely on inequality
$\langle \lvert f_j\rvert\rangle_{Q} \le 4\sigma_1(\pi_{\mathcal{S}_j}Q)$
 for a $\mathcal{D}_j$-dyadic cube $Q\subsetneq Q_{j,0}$ only if
either $Q$ or $\pi_j Q$ is good.  But this is, in fact, all we need.
\end{remark}


\begin{remark}\label{r.sparse}
By construction  $\mathcal{S}_j$ is a `sparse family of cubes', i.e.,
\begin{equation}\label{e.sparse}
\sum_{S'\in\textup{ch}_{\mathcal{S}_j} (S)} \mu(S') \le 4^{-1} \mu(S)\,,\qquad S\in\mathcal{S}_j\,,\quad j\in \{1,2\}\,.
\end{equation}
In particular, family $\mathcal{S}_j$ satisfies a `Carleson condition':
$\sum_{S'\in\mathcal{S}_j:S'\subset S} \mu(S')\lesssim \mu(S)$
if $S\in\mathcal{S}_j$.
\end{remark}

 \subsection{Quasi-orthogonality}
The following is a key inequality,
 \begin{equation}\label{e.quasi}
 	 \sum_{S\in \mathcal S_j} \sigma _{j} (S) ^{p_j} \mu( S) \lesssim \lVert  f_j\rVert_{p_j} ^{p_j} \lesssim  1\,,\qquad j\in \{1,2\}\,.
\end{equation}

\begin{proof}[Proof of \eqref{e.quasi}]
We apply a
dyadic maximal function:
$M_{j,\mu} f_j(x)=\sup_{x\in Q\in\mathcal{D}_j} \langle \lvert f_j\rvert\rangle_Q$.
For $ S\in \mathcal S _{j}$, we let $ E_{S}$ be the 
set $ S$ minus all the $ \mathcal S _{j}$-children of $ S$.  By inequality \eqref{e.sparse}, 
 $ \mu(  E_{S})  \ge \tfrac 34 
\mu(  S)  $, and the sets $ E_{S}$ are 
pairwise disjoint by definition.  
 Hence,
\begin{align*}
\sum_{S\in \mathcal S _{j}} \sigma_{j} (S) ^{p_j} \mu(  S) 
&\le \tfrac 43 \sum_{S\in \mathcal S _{j}} \langle \lvert f_j\rvert\rangle_{S} ^{p_j} \mu(  E_{S}) 
\\& \lesssim \sum_{S\in \mathcal S _{j}} \int _{E_{S}} (M_{j,\mu} f_j ) ^{p_j} \, d\mu \le \int_{\mathbf{R}^n}(M_{j,\mu} f_j)^{p_j}\,d\mu\,.  
\end{align*}
Thus, the first inequality in \eqref{e.quasi} follows
from the fact that $M_{j,\mu}$ is bounded on $L^{p_j}$. The second
inequality is a consequence of the martingale transform inequality \eqref{e.classical_martingale}.
\end{proof}

\subsection{Martingale projections}\label{s.martingale_proj}
For  $S\in\mathcal{S}_j$
and $\phi\in L^1_{\textup{loc}}$, we  define $P_{j,S} \phi = \sum_{Q\in\mathcal{D}_j\,:\,\pi_{\mathcal{S}_j}Q =S} \Delta_Q \phi$. 
By orthogonality of martingale differences and  inequality \eqref{e.classical_martingale},
for all $S\in\mathcal{S}_j$ and all sequences of constants satisfying
$\sup_{Q\in\mathcal{G}_j}
\lvert \varepsilon_{Q}\rvert \le 1$,
\begin{equation}\label{e.mart_ineq}
\bigg\lVert \sum_{Q\in\mathcal{G}_j\,:\,\pi_{\mathcal{S}_j}Q =S} \varepsilon_{Q} \Delta_Q f_j\bigg\rVert_{p_j}^{p_j}\lesssim \lVert P_{j,S} f_j\rVert_{p_j}^{p_j}\,.
\qquad 
\end{equation}
Of fundamental importance is the following inequality, which 
\emph{does not hold} for general families of 
orthogonal martingale projections, 
in the case of $1 <p_j <2$.   
\begin{equation}\label{e.proj_ineq}
\sum_{S\in\mathcal{S}_j} \lVert P_{j,S} f_j\rVert_{p_j}^{p_j}
 \lesssim 1\,.
\end{equation}

\begin{proof}[Proof of \eqref{e.proj_ineq}]
Let us write
\begin{equation}\label{e.easy_est}
\sum_{S\in\mathcal{S}_j} \lVert P_{j,S} f_j\rVert_{p_j}^{p_j}
= \sum_{S\in\mathcal{S}_j}\lVert \mathbf{1}_{S\setminus E_{S}} P_{j,S} f_j \rVert_{p_j}^{p_j}
+
\sum_{S\in\mathcal{S}_j}\lVert \mathbf{1}_{E_{S}}P_{j,S} f_j \rVert_{p_j}^{p_j}
\,,
\end{equation}
where $E_S$ denotes the set $S\setminus \bigcup_{S'\in\textup{ch}_{\mathcal{S}_j}(S)} S'$. We estimate the two terms separately.
First,
\begin{align*}
\lvert  \mathbf{1}_{S\setminus E_{S}} P_{j,S} f_j\rvert
 = \bigg|\sum_{S'\in\textup{ch}_{\mathcal{S}_j} (S)} \mathbf{1}_{S'}
 \{ \langle f_j\rangle_{S'} - \langle f_j\rangle_{S}\}\bigg\rvert
 \lesssim \sum_{S'\in\textup{ch}_{\mathcal{S}_j} (S)} \mathbf{1}_{S'} \sigma_j(S')\,.
\end{align*}
Since the family $\textup{ch}_{\mathcal{S}_j}(S)$ is disjoint, the
upper bound for the first term in RHS\eqref{e.easy_est} follows from inequality \eqref{e.quasi}.

By \eqref{e.quasi} it remains to show that
$\lvert\mathbf{1}_{E_S} P_{j,S} f_j\rvert \lesssim \mathbf{1}_{E_S} \sigma_j(S)$ 
almost everywhere.
 We  restrict
ourselves to points in which $\lim_{k\to -\infty}\mathbf{E}_{j,k} f_j(x)=f_j(x)$, hence
$\lvert \mathbf{1}_{E_S}(x) P_{j,S} f_j(x) \rvert = \lvert \mathbf{1}_{E_S}(x) \{f_j(x) - \langle f_j\rangle_{S}\}\rvert$.
Observe  that  $\lvert \langle f_j\rangle_{S}\rvert \le \sigma_j(S)$.
Now, there are three cases (1)--(3) for $x\in E_S$ as above:

(1) If there are no good $\mathcal{D}_j$-cubes inside $S$ containing $x$,
 we  have $P_{j,S} f_j(x)=0$ by definitions.

(2) There is a minimal good $\mathcal{D}_j$-cube $Q\subset S$ containing $x$, in
which case we let $Q_x\in\textup{ch}(Q)$ be the child
containing $x$. If $R\subset Q_x$ is a
$\mathcal{D}_j$-cube containing $x$, we easily
find that $\langle f_j\rangle_{R} =\langle f_j\rangle_{Q_x}$.
Thus, by martingale convergence, 
\[
\lvert f_j(x)\rvert = \lim_{\ell R \to 0} \lvert \langle f_j\rangle_{R}\rvert
=\lvert \langle f_j\rangle_{Q_x}\rvert
\le 4\sigma_j(\pi_{\mathcal{S}_j} Q_x)=4\sigma_j(S)\,.
\]
In the penultimate step above, we used 
Remark \ref{r.sigma_estimate} and the fact that $\pi_j Q_x=Q$ is good.
And, in the last step, we used the fact that $x\in E_S$.

(3) There are arbitrarily small good $\mathcal{D}_j$-cubes $Q\subset S$ containing $x$.
Hence,
\begin{align*}
\lvert f_j(x)\rvert 
=\lim_{\ell Q\to 0} \lvert \langle f_j\rangle_{Q}\rvert
\le \sup \{  \langle \lvert f_j\rvert \rangle_{Q}\,:\, x\in Q\subset S\}
\le 4\sigma_j(S)\,.
\end{align*}
The limit and supremum above are restricted to good $\mathcal{D}_j$-cubes 
satisfying $x\in Q\subset S$.
\end{proof}

\subsection{Family $\mathcal{L}_2(S)$ and its layers}\label{s.layers}
This construction is needed as we study the case of $ p_j\neq 2$, and in particular it will allow us to more freely use the inequality \eqref{e.proj_ineq}. 

For $S\in\mathcal{S}_1$ let us define
 $\mathcal{L}_2(S)\subset \mathcal{S}_2$ to be the family of cubes
 of the form
 $R=\pi_{\mathcal{S}_2}Q$, 
where $Q\in\mathcal{G}_2$ satisfies $(P,Q)\in\mathcal{P}_{\textup{inside}}$ for
some cube $P\in\mathcal{G}_1$ with $\pi_{\mathcal{S}_1} P_Q = S$. Here $P_Q$
stands for the child of $P$ containing $Q$; it exists
by goodness of $Q$.

Lemma \ref{l.layers} records the
observation that there are at most $2(r+1)$ layers in
 $\mathcal{L}_2(S)$ which contain cubes $R$ such that $R\not\subset S$.
To be more precise, let $\mathcal{L}_2^k(S)$
denote the layer $k\ge 0$ cubes in $\mathcal{L}_2(S)$, i.e.,  the
cubes $R$ in this family for which $\pi_{\mathcal{L}_2(S)}^k R$ is a maximal cube in $\mathcal{L}_2(S)$. 

\begin{lemma}\label{l.layers}
Suppose that $S\in\mathcal{S}_1$ and $R\in \mathcal{L}_2^k(S)$ with $k\ge 2(r+1)$.
Then $R\subset S$.
\end{lemma}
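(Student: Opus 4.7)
The proof is by contradiction. Suppose $R\in\mathcal{L}_2^k(S)$ with $k\ge 2(r+1)$ but $R\not\subset S$. Let $R=R_k\subsetneq R_{k-1}\subsetneq\cdots\subsetneq R_0$ be the chain of iterated $\mathcal{L}_2(S)$-parents, with $R_0$ maximal in $\mathcal{L}_2(S)$. For each index $i$, I fix witnesses $(P^{(i)},Q^{(i)})\in\mathcal{G}_1\times\mathcal{G}_2$ certifying $R_i\in\mathcal{L}_2(S)$: $(P^{(i)},Q^{(i)})\in\mathcal{P}_{\textup{inside}}$, $\pi_{\mathcal{S}_2}(Q^{(i)})=R_i$, and $\pi_{\mathcal{S}_1}(P^{(i)}_{Q^{(i)}})=S$. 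From these, two fundamental properties are immediate: $Q^{(i)}\subset P^{(i)}_{Q^{(i)}}\subset S$, and, since $\ell P^{(i)}\ge 2^{r+1}\ell Q^{(i)}$ while $\ell P^{(i)}\le 2\ell S$, also $\ell Q^{(i)}\le 2^{-r}\ell S$.

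The first step is an $\mathcal{S}_2$-minimality observation: at most one $R_i$ in the chain can contain $S$. Indeed, if $R_i\supset S$ with $i\ge 1$, then $Q^{(i-1)}\subset S\subset R_i$ while $R_i\in\mathcal{S}_2$ and $R_i\subsetneq R_{i-1}$, contradicting the minimality of $R_{i-1}=\pi_{\mathcal{S}_2}(Q^{(i-1)})$. Since $R_k\not\subset S$ forces every ancestor $R_i\not\subset S$, and since $R_i\cap S\supset Q^{(i)}\neq\emptyset$, we conclude that every $R_i$ with $i\ge 1$ must straddle $\partial S$.

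The heart of the proof is to use goodness to turn ``straddling across many chain levels'' into a size contradiction. For a straddling $R_i$, since $R_i$ reaches $\partial S$ and contains the good cube $Q^{(i)}\subset S$ with $\ell S\ge 2^r\ell Q^{(i)}$, the goodness estimate $\textup{dist}(Q^{(i)},\partial S)>(\ell Q^{(i)})^{\gamma}(\ell S)^{1-\gamma}$ forces $\ell R_i>(\ell Q^{(i)})^{\gamma}(\ell S)^{1-\gamma}$; case-analyzing on whether $\ell R_i\ge 2^r\ell Q^{(i)}$ or not and using the choice of $\gamma$ in \eqref{e.epsilon} gives a universal lower bound $\ell R_k\ge c_r\ell S$ with $c_r$ depending only on $r$ and $\gamma$. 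Dually, goodness of the $\mathcal{D}_1$-cube $P^{(i)}$ against the $\mathcal{D}_2$-cube $R_i$ (permitted by the strong form of goodness introduced earlier) combined with the nonempty intersection $Q^{(i)}\in P^{(i)}\cap R_i$ forces $P^{(i)}\subset R_i$ whenever $\ell R_i\ge 2^r\ell P^{(i)}$. If $P^{(i)}=\pi_1 S$ this yields $R_i\supset S$, contradicting straddling; in the case $P^{(i)}\subsetneq S$, an analogous argument against the $\mathcal{D}_1$-ancestor $\pi_1^t S$ of appropriate size reaches the same conclusion. Thus $\ell R_i\le C_r\ell S$ uniformly in $i\ge 1$. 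Combining these two bounds with the dyadic strict decrease $\ell R_{i+1}\le\ell R_i/2$ and tracking the constants $c_r=2^{-(r+1)}$ and $C_r=2^{r+1}$ yields $k<2(r+1)$, contradicting $k\ge 2(r+1)$.

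The main obstacle is pinning down the precise size control producing the constants $c_r=2^{-(r+1)}$ and $C_r=2^{r+1}$, which is what makes the threshold exactly $2(r+1)$ rather than some larger function of $r$. The lower bound on $\ell R_k$ hinges on the delicate interplay between the two goodness conditions for $Q^{(k)}$ (against $\partial S$ and against $R_k$ itself), while the upper bound requires handling the awkward sub-case $P^{(i)}\subsetneq S$ by substituting an ancestor $\pi_1^t S$; both arguments are exactly where the strong definition of goodness, permitting $\mathcal{D}_j$-cubes to be tested against cubes from either random grid, plays its decisive role.
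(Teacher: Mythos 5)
Your overall architecture (bound the side lengths of the chain $R_k\subsetneq\dots\subsetneq R_0$ above and below relative to $\ell S$, then use the dyadic halving to cap $k$) is the same in spirit as the paper's, and your minimality observation that no $R_i$ with $i\ge 1$ can contain $S$ is correct and a nice substitute for the paper's witness argument. But there is a genuine gap in the lower bound, which is the decisive step. Goodness of the witness cube $Q^{(i)}$ against $\partial S$ only yields $\ell R_i>(\ell Q^{(i)})^{\gamma}(\ell S)^{1-\gamma}=(\ell Q^{(i)}/\ell S)^{\gamma}\,\ell S$, and since $\ell Q^{(i)}$ can be arbitrarily small compared to $\ell S$ (only $\ell Q^{(i)}\le 2^{-r}\ell S$ is known), this degenerates and gives no bound of the form $c_r\ell S$. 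Your case split does rescue the regime $\ell R_i<2^r\ell Q^{(i)}$, but in the complementary case $\ell R_i\ge 2^r\ell Q^{(i)}$ the goodness of $Q^{(i)}$ against $R_i$ adds nothing (it only says $Q^{(i)}$ sits deep inside $R_i$, which is automatic), so no universal lower bound follows. The missing ingredient is condition (2) of the stopping-tree construction applied to $R\in\mathcal{S}_2$ itself: either $R$ or $\pi_2 R$ is good. Once one knows $2^{r+1}\ell R\le\ell S$ and $R\cap S\neq\emptyset$, goodness of $R$ or $\pi_2 R$ against the $\mathcal{D}_1$-cube $S$ (this is where the strong, two-grid definition of goodness enters) rules out straddling and forces $R\subset S$ directly — no contradiction and no lower bound on $\ell R_k$ are needed. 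This is exactly how the paper concludes, and without invoking the goodness of the stopping cubes $R_i$ your argument cannot close.

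Two secondary issues. First, your upper bound appeals to goodness of the ancestors $\pi_1^t S$, which is not available: the construction only guarantees that $S$ or $\pi_1 S$ is good. The correct route is the paper's: since $R_i$ meets both $S$ and its complement, $\mathrm{dist}(S,\partial R_i)=0$, and goodness of whichever of $S,\pi_1 S$ is good then forces $\ell R_i\le 2^r\ell S$; the detour through $P^{(i)}$ is unnecessary. Second, your constants $c_r=2^{-(r+1)}$, $C_r=2^{r+1}$ only yield $k\le 2r+3$, not $k<2(r+1)$; with the sharper bounds $\ell R_1\le 2^r\ell S$ and (for the contradiction route) $\ell R_k>2^{-r-1}\ell S$ the arithmetic closes at exactly $2(r+1)$, but as noted the paper avoids needing the lower bound altogether.
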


\begin{proof} We first claim that, if $R\in \mathcal{L}_2^k(S)$  with $k\ge 1$, then
\begin{equation}\label{e.basic_estimate}
2^{k-1} \ell R \le 2^r\ell S\,.
\end{equation}
The lemma is a  consequence of inequality \eqref{e.basic_estimate}.
Indeed, 
if $k\ge 2(r+1)$, 
we then have $2^{r+1}\ell R \le \ell S$ and
 $S\cap R\not=\emptyset$. It remains to recall 
that either $R$ or $\pi R$ is a good (by construction).

Let us then prove inequality \eqref{e.basic_estimate}.
Clearly, it suffices to
verify the case of $k=1$.
Suppose that
$R\subsetneq R_{0}$ is a cube
in the first layer, and $R_{0}\in\mathcal{L}_2^0(S)$ is maximal.
Then, by definition, there are cubes $Q,Q'\in\mathcal{G}_2$ such that
$Q\cup Q'\subset S$, $Q\subset R$ and 
$Q'\cap( R_{0}\setminus R)\not=\emptyset$.
From these facts it easily follows that $S\cap R\not=\emptyset$ and $\mathrm{dist}(S,\partial R)=0$.
Since either $S$ or $\pi_1 S$ is a good cube, 
$\ell R\le 2^r\ell S$.
\end{proof}

\subsection{Further inequalities}
The reader may omit this technical section
for the time being.
The following important inequality parallels \eqref{e.proj_ineq};
recall definition of $\mathcal{L}_2(S)$ in Section \ref{s.layers}:---for all 
sequences of constants satisfying $\sup_{\mathcal{G}_2\times\mathcal{S}_1} \lvert \varepsilon_{Q,S} \rvert \le 1$,
\begin{equation}\label{e.proj_ineqII}
\sum_{S\in\mathcal{S}_1} 
\sum_{S'\in\textup{ch}_{\mathcal{S}_1}^t(S)}
\sum_{\substack{ R\in\mathcal{L}_2(S) \\ R\not\subset S}}
 \bigg
\lVert \sum_{\substack{ Q\in\mathcal{G}_2 : \pi_{\mathcal{S}_2} Q = R \\\pi_{\mathcal{S}_1} Q = S'}} \varepsilon_{Q,S} \Delta_Q f_2\bigg\rVert_{p_2}^{p_2}
\lesssim 1\,,\quad \text{ if }
t\ge 0\,.
\end{equation}
\begin{proof}[Proof of inequality \eqref{e.proj_ineqII}]
Let us fix $t\ge 0$. First,
by martingale  transform inequality \eqref{e.classical_martingale} and 
orthogonality of martingale differences, 
we can assume that $\varepsilon_{Q,S}=1$ for all $Q$ and $S$.
By Lemma \ref{l.diff_est}, we obtain an upper bound
\begin{equation}\label{e.this_est}
\sum_{R\in\mathcal{S}_2}  \sigma_2(R)^{p_2}
\sum_{S\in \mathcal{S}_1;R\not\subset S} \mu(S\cap R) 
+ 
\sum_{R\in\mathcal{S}_2}
\sum_{R'\in\textup{ch}_{\mathcal{S}_2}(R)} \sum_{S\in\mathcal{S}_1} 
\sum_{\substack{S'\in\textup{ch}_{\mathcal{S}_1}^t(S)\\ \pi_{\mathcal{S}_1}(\pi_2 R')=S'}}
\sigma_2(R')^{p_2}\mu(R') \,.
\end{equation}
By inequality \eqref{e.quasi}, the second term is bounded by $\lesssim 1$;
Indeed, for a fixed $R'$ there is at most one pair of cubes $S,S'$ such that $\pi_{\mathcal{S}_1}(\pi_2 R')=S'$.

Concerning the first term in \eqref{e.this_est}, we observe that $\sum_{S\in \mathcal{S}_1;R\not\subset S} \mu(S\cap R) 
\lesssim \mu(R)$ and then apply inequality \eqref{e.quasi}. Mentioned observation is reached by splitting the series in two parts, depending
if $S\subset R$ or not;  The  series with $S\subset R$ is estimated by the Carleson condition,
Remark \ref{r.sparse}.
The second series, in which $S\not\subset R$, is estimated
by using the fact that $2^{-r}\ell S\le \ell R\le 2^r \ell S$ if
$S\cap R\not=\emptyset$, $S\not\subset R$, and $R\not\subset S$. Indeed, there are at most
$c(n,r)$ such cubes $S\in\mathcal{S}_1$ for a fixed $R\in\mathcal{S}_2$.
\end{proof}

\begin{lemma}\label{l.diff_est}
Let us fix
$S\in\mathcal{S}_1$, $S'\in \textup{ch}_{\mathcal{S}_1}^t(S)$, and $R\in\mathcal{L}_2(S)$ such that
$R\not\subset S$. 
Then
\begin{equation}\label{e.desired}
\bigg
\lVert \sum_{\substack{ Q\in\mathcal{G}_2 : \pi_{\mathcal{S}_2} Q = R \\\pi_{\mathcal{S}_1} Q = S'}} \Delta_Q f_2\bigg\rVert_{p_2}^{p_2}
\lesssim 
\mu(S'\cap R) \sigma_2(R)^{p_2} +
\sum_{R'\in\textup{ch}_{\mathcal{S}_2}(R)}
\mathbf{1}_{\pi_{\mathcal{S}_1} (\pi_2 R')=S'}  \mu(R') \sigma_2(R')^{p_2}\,.
\end{equation}
\end{lemma}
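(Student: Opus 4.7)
The sum on the left of \eqref{e.desired} is unchanged if we drop the goodness restriction $Q\in\mathcal{G}_2$, because $f_2$ is the projection of $\widetilde{f}_2$ onto $\mathcal{G}_2$-martingale differences (Section \ref{s.perturbations}), so $\Delta_Q f_2=0$ for every bad $Q\in\mathcal{D}_2$. Writing $Q_k(x)$ for the cube in $\mathcal{D}_{2,k}$ containing $x$, the sum equals $\sum_{k\in K(x)}\Delta_{Q_k(x)}f_2(x)$, where
\[
K(x):=\{k\in\mathbf{Z}:\pi_{\mathcal{S}_2}Q_k(x)=R,\ \pi_{\mathcal{S}_1}Q_k(x)=S'\}.
\]
As $k$ increases, $Q_k(x)$ grows and $\pi_{\mathcal{S}_j}Q_k(x)$ ascends the $\mathcal{S}_j$-tree monotonically; each defining condition of $K(x)$ thus cuts out a contiguous range of scales, and so does $K(x)$. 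If $K(x)=[k_{\min},k_{\max}]$ is nonempty, the sum telescopes to
\[
g(x):=\langle f_2\rangle_{Q^-(x)}-\langle f_2\rangle_{Q^+(x)},\qquad Q^-:=Q_{k_{\min}-1}(x),\quad Q^+:=Q_{k_{\max}}(x),
\]
and $g$ vanishes on the complement of $S'\cap R$.

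The second step is a pointwise bound on $|\langle f_2\rangle_{Q^{\pm}}|$. Remark \ref{r.sigma_estimate} yields $|\langle f_2\rangle_Q|\le 4\sigma_2(\pi_{\mathcal{S}_2}Q)$ whenever $Q$ or $\pi_2 Q$ is good. When both are bad, the identity $\Delta_{\pi_2 Q}f_2=0$ forces $\langle f_2\rangle_Q=\langle f_2\rangle_{\pi_2 Q}$, so iterating up the bad chain gives $\langle f_2\rangle_Q=\langle f_2\rangle_G$ for the first good $\mathcal{D}_2$-ancestor $G\supseteq Q$. Then $\pi_{\mathcal{S}_2}G$ is an $\mathcal{S}_2$-ancestor of $\pi_{\mathcal{S}_2}Q$, and the geometric increase $\sigma_2(S')>4\sigma_2(S)$ for $S'\in\textup{ch}_{\mathcal{S}_2}(S)$ built into the stopping tree yields $\sigma_2(\pi_{\mathcal{S}_2}G)\le\sigma_2(\pi_{\mathcal{S}_2}Q)$. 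Hence $|\langle f_2\rangle_Q|\lesssim\sigma_2(\pi_{\mathcal{S}_2}Q)$ holds unconditionally.

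A case analysis on the position of $x\in S'\cap R$ now identifies the two terms on the right of \eqref{e.desired}. If $x\in(S'\cap R)\setminus\bigcup_{R'\in\textup{ch}_{\mathcal{S}_2}(R)}R'$, neither $Q^{\pm}$ is contained in any $\mathcal{S}_2$-child of $R$, so $\pi_{\mathcal{S}_2}Q^{\pm}=R$ and $|g(x)|\lesssim\sigma_2(R)$. If $x\in R'\cap S'$ and $\pi_{\mathcal{S}_1}(\pi_2 R')=S'$, the lowest scale of $K(x)$ is $\log_2(\ell R')+1$, so $Q^-=R'\in\mathcal{S}_2$; by the stopping-tree definition $|\langle f_2\rangle_{R'}|\le\sigma_2(R')$, while $|\langle f_2\rangle_{Q^+}|\lesssim\sigma_2(R)<\sigma_2(R')$, yielding $|g(x)|\lesssim\sigma_2(R')$. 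Finally, if $x\in R'\cap S'$ but $\pi_{\mathcal{S}_1}(\pi_2 R')\neq S'$, the $\mathcal{S}_1$-monotonicity of $k\mapsto\pi_{\mathcal{S}_1}Q_k(x)$ forces $k_{\min}>\log_2(\ell R')+1$; then $Q^-\supsetneq R'$ and $\pi_{\mathcal{S}_2}Q^-=R$, so $|g(x)|\lesssim\sigma_2(R)$.

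Combining the three subcases produces the pointwise bound
\[
|g(x)|\lesssim \sigma_2(R)\,\mathbf{1}_{S'\cap R}(x)\;+\;\sum_{R'\in\textup{ch}_{\mathcal{S}_2}(R)}\mathbf{1}_{\pi_{\mathcal{S}_1}(\pi_2 R')=S'}\,\sigma_2(R')\,\mathbf{1}_{R'}(x),
\]
from which \eqref{e.desired} follows by raising to the $p_2$-power, applying $(a+b)^{p_2}\lesssim a^{p_2}+b^{p_2}$, integrating, and using $\mu(R'\cap S')\le\mu(R')$. The main obstacle is verifying in the last subcase that $Q^-$ is strictly larger than $R'$ precisely when the indicator $\mathbf{1}_{\pi_{\mathcal{S}_1}(\pi_2 R')=S'}$ vanishes, which requires tracking the interaction between the $\mathcal{D}_2$-scale at which $\{Q_k(x)\}$ first exits $R'$ and the $\mathcal{S}_1$-scale at which $\pi_{\mathcal{S}_1}Q_k(x)$ first equals $S'$.
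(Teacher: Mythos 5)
Your argument is, at its core, the same as the paper's: a martingale telescoping over the contiguous range of scales singled out by the two parent conditions, followed by the trichotomy according to whether $x$ lies in an $\mathcal{S}_2$-child $R'$ of $R$ with $\pi_{\mathcal{S}_1}(\pi_2R')=S'$, in a child with $\pi_{\mathcal{S}_1}(\pi_2R')\ne S'$, or in $E_R=R\setminus\bigcup_{R'\in\textup{ch}_{\mathcal{S}_2}(R)}R'$. The paper organizes this as the split $\mathbf{A}+\mathbf{B}$ over $R\setminus E_R$ and $E_R\cap S'$, but the case analysis and the resulting pointwise bound are identical to yours, so the contributions to the two terms on the right of \eqref{e.desired} arise in the same way.

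There is, however, one genuine gap: you assume throughout that $K(x)$ has a finite minimum, so that $Q^-=Q_{k_{\min}-1}(x)$ exists. This fails on a set of possibly positive measure. If $x\in E_R\cap S'$ and $x$ also avoids all $\mathcal{S}_1$-children of $S'$, then for every sufficiently small $k$ the cube $Q_k(x)$ satisfies $\pi_{\mathcal{S}_2}Q_k(x)=R$ and $\pi_{\mathcal{S}_1}Q_k(x)=S'$, so $k_{\min}=-\infty$ and your subcase (a), which asserts ``$\pi_{\mathcal{S}_2}Q^{\pm}=R$'', has no $Q^-$ to speak of. In that situation the telescoping sum must be identified (for $\mu$-a.e.\ $x$, via martingale convergence $\lim_{k\to-\infty}\mathbf{E}_{2,k}f_2(x)=f_2(x)$) with $f_2(x)-\langle f_2\rangle_{Q^+}$, and one then needs the separate bound $\lvert f_2(x)\rvert\lesssim\sigma_2(R)$ for $x\in E_R$. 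That bound is not a formal consequence of your ``unconditional'' average estimate; it is exactly the three-case argument (no good cubes below / a minimal good cube below / arbitrarily small good cubes below) carried out in the paper's proof of \eqref{e.proj_ineq}, which the paper's treatment of the term $\mathbf{B}$ invokes explicitly. Supplying this case completes your proof. A further, purely cosmetic, slip: when iterating $\Delta_{\pi_2Q}f_2=0$ up a chain of bad cubes one arrives at $\langle f_2\rangle_Q=\langle f_2\rangle_{Q'}$ where $Q'$ is the last \emph{bad} cube in the chain (the one whose dyadic parent is good), not the first good ancestor $G$ itself; Remark \ref{r.sigma_estimate} is then applied to $Q'$, and the monotonicity of $\sigma_2$ along the stopping tree gives the stated conclusion, so nothing is lost.
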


\begin{proof}
Let $E_R$ be the set $R$ take away all the $\mathcal{S}_2$-children of $R$.
Then, LHS\eqref{e.desired} is bounded by 
\[
\mathbf{A} + \mathbf{B}= \bigg
\lVert 
\mathbf{1}_{R\setminus E_R}\sum_{\substack{ Q\in\mathcal{G}_2 : \pi_{\mathcal{S}_2} Q = R \\\pi_{\mathcal{S}_1} Q = S'}} \Delta_Q f_2\bigg\rVert_{p_2}^{p_2}
+ \bigg
\lVert \mathbf{1}_{E_R\cap S'}\sum_{\substack{ Q\in\mathcal{G}_2 : \pi_{\mathcal{S}_2} Q = R \\\pi_{\mathcal{S}_1} Q = S'}} \Delta_Q f_2\bigg\rVert_{p_2}^{p_2}\,.
\]
In order to estimate $\mathbf{A}$, let us consider a child
$R'\in\textup{ch}_{\mathcal{S}_2}(R)$ for which there
are cubes $Q^+=Q^+(R')$ and
$Q^-=Q^-(R')$:---these are the maximal and minimal cubes, respectively, subject
to conditions $Q\in\mathcal{G}_2$, $\pi_{\mathcal{S}_2} Q = R$,
$\pi_{\mathcal{S}_1} Q=S'$, and $R'\subsetneq Q$. Then
\begin{equation}\label{e.r'_estimate}
\begin{split}
\bigg\lvert \mathbf{1}_{R'}\sum_{\substack{ Q\in\mathcal{G}_2 : \pi_{\mathcal{S}_2} Q = R \\\pi_{\mathcal{S}_1} Q = S'}} \Delta_Q f_2
\bigg\rvert
 &= \mathbf{1}_{R'\cap S'}\cdot \lvert
\{\langle f_2\rangle_{Q^{-}_{R'}}  - \langle f_2\rangle_{Q^+}\}\rvert\\
&\lesssim  \begin{cases}
\mathbf{1}_{\pi_{\mathcal{S}_1}(\pi_2 R')=S'}\mathbf{1}_{R'} \sigma_2(R')\,,\quad &\text{ if } Q^{-}_{R'}=R'\,;\\
\mathbf{1}_{S'\cap R'} \sigma_2(R)\,,\quad &\text{otherwise}\,.
\end{cases}
\end{split}
\end{equation}
We used the facts
that $\Delta_Q f_2=0$ if $Q\subset Q_{2,0}$ is  bad,
and that $\pi_{\mathcal{S}_2} Q^{-}_{R'}=R$ if $Q^{-}_{R'}\not=R'$.
Writing $R\setminus E_R=\sum_{R'\in\textup{ch}_{\mathcal{S}_2}(R)} \mathbf{1}_{R'}$ and using disjointness of these children yields
\begin{equation}\label{e.a_estimate}
\mathbf{A} \lesssim 
\mu(S'\cap R) \sigma_2(R)^{p_2} +
\sum_{R'\in\textup{ch}_{\mathcal{S}^2}(R)}
\mathbf{1}_{\pi_{\mathcal{S}_1} (\pi_2 R')=S'}  \mu(R') \sigma_2(R')^{p_2}\,.
\end{equation}

We turn to term $\mathbf{B}$;
we will implicitly use the fact that $\Delta_Q f_2=0$ if $Q\subset Q_{2,0}$ is a bad $\mathcal{D}_2$-cube.
Let us fix 
a point $x\in E_R\cap S'$ such that $\lim_{k\to -\infty}\mathbf{E}_{2,k} f_2(x)=f_2(x)$, and there is a maximal cube $Q^+\in\mathcal{G}_2$, subject to conditions
$x\in Q\in\mathcal{G}_2$, $\pi_{\mathcal{S}_2} Q=R$, $\pi_{\mathcal{S}_1}Q=S'$. Then,
\begin{equation}\label{e.x_sums}
\bigg\lvert \sum_{\substack{ Q\in\mathcal{G}_2 : \pi_{\mathcal{S}_2} Q = R \\\pi_{\mathcal{S}_1} Q = S'}} \Delta_Q f_2 (x)
\bigg\rvert
= 
\bigg\lvert\sum_{ \substack{Q\in\mathcal{G}_2:\pi_{\mathcal{S}_1} Q = S'\\Q\subset Q^+}} \Delta_Q f_2(x)\bigg\rvert\,.
\end{equation}
We aim to verify that $RHS\eqref{e.x_sums}\lesssim \sigma_2(R)$.
This allows us to conclude that
$\mathbf{B}\lesssim \mu(S'\cap R)\sigma_2(R)^{p_2}$.
There are two cases. First, $\pi_{\mathcal{S}_1}Q = S'$ for
all cubes $x\in Q\in\mathcal{G}_2$ with $Q\subset Q^+$;
In this case, we proceed as in the proof of \eqref{e.proj_ineq}  in order to see that
$ RHS\eqref{e.x_sums} = \lvert f_2(x) - \langle f_2 \rangle_{Q^+}\rvert \lesssim \sigma_2(R)$.
Second, there is a minimal cube $Q^-$ subject to
conditions $\pi_{\mathcal{S}_1} Q = S'$, $x\in Q\in\mathcal{G}_2$,
$ Q\subset Q^+$. In this case, we find that
$RHS\eqref{e.x_sums} =\lvert \langle f_2\rangle_{Q^-_x} - \langle f_2\rangle_{Q^+}\rvert\lesssim \sigma_2(R)$,
 where $Q^-_x$ denotes the child of $Q^-$, containing $x$.
In the  last step, we used the fact that $x\in E_R$ so 
that
$\pi_{\mathcal{S}_2}(Q^-_x) = R$.
\end{proof}

\section{The Inside-Paraproduct Term}\label{s.inside_core}

First we decompose the inside term
$\mathbf{B}_{\textup{inside}}(f_1,f_2)$,
 associated with the indexing
cubes $\mathcal{P}_{\textup{inside}}$. There will be three terms
labelled as:
`paraproduct', `stopping', and `error'. The
`paraproduct' term is treated in this section.
The other ones are treated in Section \ref{s.remaining}.

The conditions for $ (P,Q) \in	\mathcal P _{\textup{inside}} $ are:
$P\in\mathcal{G}_1$, $Q\in\mathcal{G}_2$, 
$ Q\subset P$, 
and $ 2 ^{r} \ell Q < \ell P$.
These are abbreviated to $ Q\Subset P$. 
The child of $P$ containing $Q$ is denoted by $P_Q$; it exists by goodness of $Q$.
For $(P,Q)\in\mathcal{P}_{\textup{inside}}$ we write
\begin{align*}
{\Delta}_P f_1 &=  \langle {\Delta}_P f_1\rangle_{P_Q} \mathbf{1}_{\pi_{\mathcal{S}_1}P_Q} - 
\langle {\Delta}_P f_1\rangle_{P_Q} \mathbf{1}_{\pi_{\mathcal{S}_1}P_Q\setminus P_Q} + {\Delta}_P f_1 \cdot \mathbf{1}_{P\setminus P_Q}\,.
\end{align*}
This equation 
is valid pointwise $\mu$-almost everywhere (everywhere if $\mu(P_Q)\not=0$), and 
it  yields the following expansion, respectively,
\begin{equation}\label{e.inside_decomposition}
\begin{split}
\mathbf{B}_{\textup{inside}}(f_1,f_2)&=\sum_{(P,Q)\in\mathcal{P}_{\textup{inside}}} 
\langle T{\Delta}_P f_1, \Delta_Q f_2\rangle
\\&= {\mathbf{B}}^{\textup{para}}(f_1,f_2)
- {\mathbf{B}}^{\textup{stop}}(f_1,f_2) + {\mathbf{B}}^{\textup{error}}(f_1,f_2)\,,
\end{split}
\end{equation}
Hence, e.g., ${\mathbf{B}}^{\textup{para}}(f_1,f_2)=\sum_{(P,Q)\in\mathcal{P}_{\textup{inside}}} 
\langle \Delta_P f_1\rangle_{P_Q} \langle T\mathbf{1}_{\pi_{\mathcal{S}_1}P_Q}, \Delta_Q f_2\rangle$.
%
The main result in this section is the following estimate for the paraproduct term.

\begin{proposition}\label{p.para_estimate}
We have inequality
$\big\lvert {\mathbf{B}}^{\textup{para}}(f_1,f_2)\big\lvert  \lesssim 1+\mathbf{T}_{\textup{loc}}$.
\end{proposition}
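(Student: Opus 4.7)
The plan is to reorganize the paraproduct sum by the $\mathcal{S}_1$-cube $S=\pi_{\mathcal{S}_1}(P_Q)$, apply the local testing condition \eqref{l1_testing} on each $T\mathbf{1}_S$, and close the estimate via quasi-orthogonality and the martingale projection inequalities of Section \ref{s.corona}. I would write
\[
\mathbf{B}^{\textup{para}}(f_1,f_2)=\sum_{S\in\mathcal{S}_1}\langle T\mathbf{1}_S,\,G_S\rangle,\qquad G_S:=\sum_{\substack{(P,Q)\in\mathcal{P}_{\textup{inside}}\\ \pi_{\mathcal{S}_1}(P_Q)=S}}\langle\Delta_P f_1\rangle_{P_Q}\,\Delta_Q f_2,
\]
observing that $G_S$ is supported in $S$ because every appearing $Q$ satisfies $Q\subset P_Q\subset S$. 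The testing hypothesis gives $|\langle T\mathbf{1}_S,G_S\rangle|\leq \mathbf{T}_{\textup{loc}}\mu(S)^{1/p_1}\|G_S\|_{p_2}$ by H\"older's inequality, and one more application of H\"older with weights $\sigma_1(S)$ and $\sigma_1(S)^{-1}$ combined with the quasi-orthogonality estimate \eqref{e.quasi} reduces matters to the inequality
\[
\sum_{S\in\mathcal{S}_1}\sigma_1(S)^{-p_2}\|G_S\|_{p_2}^{p_2}\lesssim 1.
\]

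To prove this, rewrite $G_S=\sum_Q\phi_{Q,S}\,\Delta_Q f_2$ with $\phi_{Q,S}=\sum_P(\langle f_1\rangle_{P_Q}-\langle f_1\rangle_P)$, the sum being over admissible $P\in\mathcal{G}_1$. Using Remark \ref{r.sigma_estimate} together with $\pi_1 P_Q=P\in\mathcal{G}_1$ yields $\langle|f_1|\rangle_{P_Q}\leq 4\sigma_1(S)$, and the inclusion $\pi_{\mathcal{S}_1}P\supseteq S$ combined with the monotonicity of $\sigma_1$ along the tree (children carry $\sigma_1$-values larger by a factor at least 4) yields $\langle|f_1|\rangle_P\leq 4\sigma_1(\pi_{\mathcal{S}_1}P)\leq 4\sigma_1(S)$. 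Telescoping along the chain of admissible $P_Q$'s inside $S$ then produces the pointwise bound $|\phi_{Q,S}|\lesssim\sigma_1(S)$. Inserting this into the martingale transform inequality \eqref{e.mart_ineq}, organising the good $Q$'s contributing to $G_S$ by their $\mathcal{S}_2$-parent $R\in\mathcal{L}_2(S)$, and summing via the projection inequalities \eqref{e.proj_ineq} and \eqref{e.proj_ineqII} would complete the estimate.

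The main obstacle is the pointwise bound $|\phi_{Q,S}|\lesssim\sigma_1(S)$. The chain of admissible cubes $P_Q$ may have length of order $\log_2(\ell S/\ell Q)$, and the telescoping does not collapse to a clean two-term difference, because the dyadic parent of an admissible $P_Q$ need not itself be a child of a good cube, so the next chain element may skip several dyadic generations. One must therefore partition the chain into maximal telescoping blocks, bound each block's endpoints by the corona stopping values, and show that the resulting block-endpoint contributions still sum to $\lesssim\sigma_1(S)$. A secondary delicate point will be matching the $\mathcal{D}_2$-indexing inherent in $G_S$ with the precise triple-sum structure of \eqref{e.proj_ineqII}, where the layers $R\in\mathcal{L}_2(S)$ with $R\not\subset S$ and the inner $\pi_{\mathcal{S}_1}$-refinement must both be accounted for.
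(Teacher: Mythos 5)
Your overall organization---regrouping by $S=\pi_{\mathcal{S}_1}(P_Q)$, extracting a bounded coefficient per pair $(Q,S)$, and closing with quasi-orthogonality \eqref{e.quasi} and the projection inequalities---is the same skeleton as the paper's proof, but two of your steps do not close, and one of them is fatal to the argument as proposed. First, the telescoping. The obstacle you flag (gaps in the chain of admissible $P$) is resolved in the paper not by a block decomposition but by the observation that $f_1$ is already a projection onto good cubes, so $\Delta_P f_1=0$ for every bad $P\subset Q_{1,0}$; one may therefore insert the missing dyadic generations for free and obtain the clean two-term identity \eqref{e.telescope}, whence $\lvert\varepsilon_{Q,S}\rvert\le 8$ (Lemma \ref{l.epsilon_bounded}). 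Your proposed fix---telescoping over maximal blocks and bounding each block's endpoints by stopping values---would produce a bound of order (number of blocks)$\times\sigma_1(S)$, and the number of blocks is of order $\log_2(\ell S/\ell Q)$, which is unbounded; so that route does not recover $\lvert\phi_{Q,S}\rvert\lesssim\sigma_1(S)$.

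The fatal issue is the reduction to $\sum_{S\in\mathcal{S}_1}\sigma_1(S)^{-p_2}\lVert G_S\rVert_{p_2}^{p_2}\lesssim 1$. This inequality is false in general: a fixed good cube $Q$ contributes to $G_S$ for \emph{every} $\mathcal{S}_1$-ancestor $S$ of $\pi_{\mathcal{S}_1}Q$ (as $P$ grows, $\pi_{\mathcal{S}_1}(P_Q)$ runs through an arbitrarily long chain), and the coefficients $\varepsilon_{Q,S}$ need not decay along that chain. Already for $p_2=2$, orthogonality gives $\sum_S\sigma_1(S)^{-2}\lVert G_S\rVert_2^2=\sum_Q\lVert\Delta_Q f_2\rVert_2^2\sum_{S}\varepsilon_{Q,S}^2$, with an inner sum that can have unboundedly many terms of size $\sim 1$. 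The crude H\"older step $\lvert\langle T\mathbf{1}_S,G_S\rangle\rvert\le\mathbf{T}_{\textup{loc}}\mu(S)^{1/p_1}\lVert G_S\rVert_{p_2}$ throws away exactly the structure that compensates for this multiplicity. The paper instead keeps the stratification by the generation gap $t$ between $S$ and $S'=\pi_{\mathcal{S}_1}Q$ (and by $R=\pi_{\mathcal{S}_2}Q$, with the $R\subset S$ and $R\not\subset S$ cases treated via Lemma \ref{l.layers}), subtracts the constants $\tau_{t,S'}$ from $T\mathbf{1}_S$---legitimate because $\Delta_Q f_2$ has mean zero---and proves in Lemmas \ref{l.tss'_1} and \ref{l.tss_2} that the resulting factor $\mathbf{A}_{t,S}$ carries geometric decay $2^{-t/p_1}$, using the sparseness of $\mathcal{S}_1$ (Remark \ref{r.sparse}), the local testing condition, and the off-diagonal estimate \eqref{e.off_diagonal} together with goodness of the stopping cubes. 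For each fixed $t$ the $Q$'s then appear with multiplicity one, \eqref{e.proj_ineq} and \eqref{e.proj_ineqII} apply, and the $t$-sum converges. You would need to build this decay mechanism into your argument; without it the double counting over ancestors cannot be controlled.
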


The remainder of this section is dedicated to
the proof of this proposition, and
the main focus will be on auxiliary inequalities
\eqref{e.subset_ineq}
and \eqref{e.subset_ineqII}.
Let us first examine how these
 inequalities are used to prove
Proposition \ref{p.para_estimate}.
First, for $S\in\mathcal{S}_1$, recall definition of $\mathcal{L}_2(S)$ given in Section \ref{s.layers}.
We define
\begin{align*}
&{\mathbf{B}}^{\textup{para}}_{S,\not\subset}(f_1,f_2) + {\mathbf{B}}^{\textup{para}}_{S,\subset}(f_1,f_2)\\&=\bigg\{ \sum_{\substack {R\in \mathcal{L}_2(S) \\ R\not\subset S }} + \sum_{\substack {R\in \mathcal{L}_2(S) \\R\subset S }}\bigg\}
\sum_{Q\in\mathcal{G}_2:\pi_{\mathcal{S}_2}Q=R} \sum_{\substack{P\in\mathcal{G}_1: \pi_{\mathcal{S}_1}P_Q = S \\ Q\Subset P}}
\langle {\Delta}_P f_1\rangle_{P_Q}\langle T\mathbf{1}_S,\Delta_Q f_2\rangle\,.
\end{align*}
Then, by the auxiliary inequalities mentioned above,
\begin{align*}
\rvert {\mathbf{B}}^{\textup{para}}(f_1,f_2)\rvert & \le
\bigg\lvert \sum_{S\in\mathcal{S}_1}  {\mathbf{B}}^{\textup{para}}_{S,\not\subset}(f_1,f_2)  \bigg\rvert
+\bigg\lvert \sum_{S\in\mathcal{S}_1}  {\mathbf{B}}^{\textup{para}}_{S,\subset}(f_1,f_2)  \bigg\rvert
\lesssim 1+\mathbf{T}_{\textup{loc}}\,.
\end{align*}
This concludes the proof of Proposition
\ref{p.para_estimate}, assuming the auxiliary inequalities.


\subsection{A telescoping identity}

For  fixed $Q\in\mathcal{G}_2$ and $S\in\mathcal{S}_1$, let us define
a constant $\varepsilon_{Q,S}$ by
\begin{equation}\label{epsilon_def}
\varepsilon_{Q,S}\sigma_1(S) = \sum_{\substack{ P\in\mathcal{G}_1:\pi_{\mathcal{S}_1} P_Q = S \\ Q\Subset P}}
\langle \Delta_P f_1\rangle_{P_Q}\,.
\end{equation}
 It is important to use the condition $\pi_{\mathcal{S}_1}P_Q=S$
 instead of $\pi_{\mathcal{S}_1}P=S$. Otherwise,
the following important lemma might fail, as
the measure $\mu$ need not be doubling.

\begin{lemma}\label{l.epsilon_bounded} For
$Q\in\mathcal{G}_2$ and $S\in\mathcal{S}_1$, we have $\lvert \varepsilon_{Q,S}\rvert \lesssim 1$.
\end{lemma}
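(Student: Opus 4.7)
My plan is to recognize the sum in \eqref{epsilon_def} as a telescope along the dyadic chain of $\mathcal{D}_1$-cubes above $Q$, and then to bound the two surviving endpoints using Remark \ref{r.sigma_estimate} together with the $4$-fold growth of $\sigma_1$ along generations of the stopping tree.

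Set-up: let $P^{(k)}$ denote the unique $\mathcal{D}_1$-cube of side length $2^{k}\ell Q$ containing $Q$. Then $Q\Subset P$ forces $P=P^{(k)}$ for $k\ge r+1$, and the child of $P^{(k)}$ containing $Q$ is $P_Q=P^{(k-1)}$. Because $f_1$ is assembled solely from good-cube martingale differences (plus the constant-times-indicator term on $Q_{1,0}$), we have $\Delta_P f_1=0$ whenever $P\subsetneq Q_{1,0}$ is bad; hence the sum in \eqref{epsilon_def} is unchanged if we extend it to all $P=P^{(k)}$ in the chain (good or bad) subject to $\pi_{\mathcal{S}_1}P_Q=S$. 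The set of indices $i=k-1$ satisfying $\pi_{\mathcal{S}_1}P^{(i)}=S$ is an interval $[a,b]$, the empty case being trivial. Writing $\langle\Delta_P f_1\rangle_{P_Q}=\langle f_1\rangle_{P_Q}-\langle f_1\rangle_{P}$, the sum collapses to
\[
\varepsilon_{Q,S}\sigma_1(S)\;=\;\langle f_1\rangle_{P^{(a)}}-\langle f_1\rangle_{P^{(b+1)}}.
\]

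Endpoints: at the lower end $\pi_{\mathcal{S}_1}P^{(a)}=S$, so after possibly walking up through a few consecutive bad cubes (whose martingale differences vanish, so averages are preserved) one lands on a cube to which Remark \ref{r.sigma_estimate} applies, giving $|\langle f_1\rangle_{P^{(a)}}|\le 4\sigma_1(S)$. At the upper end, by maximality of $b$ the cube $P^{(b+1)}$ strictly contains $S$, so $S':=\pi_{\mathcal{S}_1}P^{(b+1)}$ is a strict $\mathcal{S}_1$-ancestor of $S$; iterating the defining stopping inequality $\sigma_1(\text{child})>4\sigma_1(\text{parent in }\mathcal{S}_1)$ yields $\sigma_1(S')\le\sigma_1(S)/4$, and a second application of Remark \ref{r.sigma_estimate} then delivers $|\langle f_1\rangle_{P^{(b+1)}}|\le 4\sigma_1(S')\le\sigma_1(S)$. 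Combining the two bounds gives $|\varepsilon_{Q,S}|\sigma_1(S)\le 5\sigma_1(S)$, whence $|\varepsilon_{Q,S}|\le 5$.

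Remarks. The conceptual reason for demanding $\pi_{\mathcal{S}_1}P_Q=S$ rather than the apparently simpler $\pi_{\mathcal{S}_1}P=S$ is exactly what this argument reveals: only the former arranges for $\pi_{\mathcal{S}_1}P^{(a)}=S$ at the lower telescope endpoint, which is precisely the hypothesis Remark \ref{r.sigma_estimate} needs to bound $\langle f_1\rangle_{P^{(a)}}$ by $\sigma_1(S)$; with the alternative condition, $P^{(a)}$ would lie inside an $\mathcal{S}_1$-descendant of $S$ where $\sigma_1$ is (indeed geometrically) larger, and in the absence of any doubling of $\mu$ no comparison would be available. The main technical obstacle I anticipate is the edge case in which $S$ is a top-level (maximal) $\mathcal{S}_1$-cube, so that no proper $\mathcal{S}_1$-ancestor $S'$ exists at the upper endpoint; this is handled either by formally adjoining $Q_{1,0}$ to $\mathcal{S}_1$ as a root with a suitably small $\sigma_1$-value, or by a direct estimate of $\langle f_1\rangle_{P^{(b+1)}}$ that exploits the identity $\int f_1\,d\mu=\int\widetilde f_1\,d\mu$ and the concentration of $f_1$ in $Q_{1,0}$.
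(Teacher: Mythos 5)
Your overall strategy---telescoping the sum in \eqref{epsilon_def} along the dyadic chain above $Q$ (with bad cubes contributing zero) and then bounding the two surviving averages via Remark \ref{r.sigma_estimate} and the factor-$4$ growth of $\sigma_1$---is the same as the paper's. The lower endpoint is handled correctly: walking up through consecutive bad cubes preserves the average, and one lands on a cube whose $\mathcal{S}_1$-parent is still $S$ and whose dyadic parent is good, so Remark \ref{r.sigma_estimate} applies. However, your treatment of the upper endpoint has a genuine gap, in two places. First, you apply Remark \ref{r.sigma_estimate} to $P^{(b+1)}$ without verifying its hypothesis: since you deliberately extended the chain to include bad cubes, $P^{(b+1)}$ need not be good, and neither need its dyadic parent be, in which case the remark gives nothing for $\langle f_1\rangle_{P^{(b+1)}}$. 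Second, even granting that, $\pi_{\mathcal{S}_1}P^{(b+1)}$ need not be a strict $\mathcal{S}_1$-ancestor of $S$: when $S$ is a maximal cube of $\mathcal{S}_1$ (equivalently, when $P^{(b)}=S$ is a maximal good cube in $Q_{1,0}$), no cube of $\mathcal{S}_1$ contains $P^{(b+1)}$ and $\pi_{\mathcal{S}_1}P^{(b+1)}=\mathbf{R}^n$ by the paper's convention. You flag this edge case but neither of your proposed fixes is carried out, and the first one (adjoining $Q_{1,0}$ with a ``suitably small'' $\sigma_1$-value) is not available: there is no control of $\langle\lvert f_1\rvert\rangle_{P^{(b+1)}}$ by $\sigma_1(S)$ for an arbitrary bad cube sitting between the maximal good cubes and $Q_{1,0}$.

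Both defects are repaired by anchoring the upper endpoint at a good cube rather than at $P^{(b+1)}$, which is what the paper does. Let $P^{+}$ be the \emph{maximal good} cube appearing in the original (unextended) sum. Every cube of the chain strictly between $P^{+}$ and $P^{(b+1)}$ is bad, so their martingale differences vanish and $\langle f_1\rangle_{P^{(b+1)}}=\langle f_1\rangle_{P^{+}}$. Now $P^{+}\in\mathcal{G}_1$ is good and contained in $Q_{1,0}$, hence it is contained in some maximal cube of $\mathcal{S}_1$ and $\pi_{\mathcal{S}_1}P^{+}$ is a genuine $\mathcal{S}_1$-cube containing $S$ (it contains $P^{+}_Q$, whose $\mathcal{S}_1$-parent is $S$); by Remark \ref{r.sigma_estimate} and the monotone decrease of $\sigma_1$ up the stopping tree, $\lvert\langle f_1\rangle_{P^{+}}\rvert\le 4\sigma_1(\pi_{\mathcal{S}_1}P^{+})\le 4\sigma_1(S)$, with no exceptional case. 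A symmetric remark applies at the bottom: the paper takes the endpoint to be $P^{-}_Q$ with $P^{-}$ the minimal good cube in the sum, so the hypothesis of Remark \ref{r.sigma_estimate} is automatic. One further bookkeeping point you omit: by the convention $\langle\Delta_P f_1\rangle_{P_Q}=0$ when $\mu(P_Q)=0$, the terms at the bottom of the chain with $\mu(P^{(k-1)})=0$ are not telescoping differences, so the sum must first be restricted to cubes with $\mu(P_Q)\neq 0$ (these form an upper segment of the chain, so this only shifts the lower endpoint).
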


\begin{proof}
Recall our convention that $\langle \Delta_P f_1\rangle_{P_Q}=0$ if $\mu(P_Q)=0$.
Consider the minimal and maximal dyadic cubes: $P^{-}$ and $P^+$,
subject to conditions 
$P\in\mathcal{G}_1$, $\pi_{\mathcal{S}_1} P_Q = S$,
$Q\Subset P$, and $\mu(P_Q)\not=0$.
If such cubes do not exist, we are done.
Otherwise, we claim that
\begin{equation}\label{e.telescope}
\varepsilon_{Q,S} \sigma_1(S) = \langle f_1\rangle_{P_Q^{-}}
-\langle f_1\rangle_{P^+}\,.
\end{equation}
By using equation \eqref{e.telescope} and the construction of the stopping tree,
we find that $\lvert \varepsilon_{Q,S}\rvert\le 8$. 

It remains to prove equation \eqref{e.telescope}. Suppose that
 $P\in\mathcal{G}_1$ is such that
$\pi_{\mathcal{S}_1} P_Q = S$, $Q\Subset P$, and
$\mu(P_Q)\not=0$.
Then $P^-\subset P\subset P^+$. 
By this observation,
\begin{equation}\label{e.telescopeII}
\sum_{\substack{ P\in\mathcal{G}_1:\pi_{\mathcal{S}_1} P_Q = S \\ 
Q\Subset P:\mu(P_Q)\not=0}}
 {\Delta}_P f_1\cdot \mathbf{1}_{P_Q^-} =
 \sum_{\substack{ P\in\mathcal{G}_1\\ P^-\subset P\subset P^+}}
 \mathbf{1}_{\pi_{\mathcal{S}_1}P_Q  = S} \mathbf{1}_{Q\Subset P}
\mathbf{1}_{\mu(P_Q)\not=0} \cdot
{\Delta}_P f_1\cdot \mathbf{1}_{P_Q^-}\,.
\end{equation}
Observe that $ \mathbf{1}_{\pi_{\mathcal{S}_1}P_Q  = S} \mathbf{1}_{Q\Subset P}\mathbf{1}_{\mu(P_Q)\not=0}=1$
inside the summation. Also,
$\Delta_P f_1= 0$ if $P$ is a bad cube with $P^- \subset P\subset P^+$. Thus,
by adding the zero contribution from the bad cubes in a formal manner, we obtain
a telescoping identity: 
$LHS\eqref{e.telescopeII} = \{ \langle f_1\rangle_{P^-_Q} - \langle f_1\rangle_{P^+}  \}\mathbf{1}_{P_Q^-}$.
The equation \eqref{e.telescope} follows from this: first, we restrict ourselves
to cubes $P$ with $\mu(P_Q)\not=0$ in the series defining $\varepsilon_{Q,S}$. Then, we
replace the $P_Q$ averages by $P_Q^{-}$ averages inside the
summation; observe that $P_Q^-\subset P_Q$ and $\mu(P_Q^-)\not=0$.
Finally, we exchange the order of summation and the brackets,
and apply the obtained telescoping identity. \end{proof}

\subsection{Summation involving cubes $R\not\subset S$}
Our aim in this section is to prove an inequality,
\begin{equation}\label{e.subset_ineq}
\bigg\lvert \sum_{S\in\mathcal{S}_1}  {\mathbf{B}}^{\textup{para}}_{S,\not\subset}(f_1,f_2)  \bigg\rvert
\lesssim 1+\mathbf{T}_{\textup{loc}}\,.
\end{equation}
Let us express the series defining $ {\mathbf{B}}^{\textup{para}}_{S,\not\subset}(f_1,f_2)$ in a convenient manner. 
For this purpose, observe that $Q\subset P_Q\subset S$ for any cube $Q$ in the
series defining ${\mathbf{B}}^{\textup{para}}_{S,\not\subset}(f_1,f_2)$. In particular,
$\pi_{\mathcal{S}_1}Q\subset S$. Thus, by organising
the $Q$-summation in terms of
their $\mathcal{S}_1$-parents and  
defining $\varepsilon_{Q,S}$ as the solution to equation \eqref{epsilon_def}, we find that
\begin{equation}\label{e.complex}
{\mathbf{B}}^{\textup{para}}_{S,\not\subset}(f_1,f_2)  = \sigma_1(S)
\sum_{t\ge 0} \sum_{\substack {R\in \mathcal{L}_2(S) \\R\not\subset S }}
\sum_{S'\in \textup{ch}_{\mathcal{S}_1}^t(S)}
\sum_{\substack{Q\in\mathcal{G}_2:\pi_{\mathcal{S}_2}Q=R\\ \pi_{\mathcal{S}_1}Q=S'}} 
\langle \mathbf{1}_R\mathbf{1}_{S'}\{T\mathbf{1}_S - \mathbf{\tau}_{t,S'} \},\varepsilon_{Q,S} \Delta_Q f_2\rangle\,.
\end{equation}
By using the fact that  $\Delta_Q f_2$ has mean zero, we have also subtracted off the constants 
\[
\tau_{t,S'} = \begin{cases} 0\,,\quad &\text{ if } t\in \{0,\ldots, 2r+1\}\,;\\
T\mathbf{1}_{S\setminus \pi_{\mathcal{S}_1}^{\lfloor t/2\rfloor} S'}(x_{S'})\,, &\text{ otherwise}\,.
\end{cases}
\]
For convenience, let us denote
\[
\mathbf{A}_{t,S}= \bigg\{
\sum_{\substack {R\in \mathcal{L}_2(S) \\R\not\subset S }}
\sum_{S'\in \textup{ch}_{\mathcal{S}_1}^t(S)} 
\lVert  \mathbf{1}_R\mathbf{1}_{S'}\{T\mathbf{1}_S - \mathbf{\tau}_{t,S'} \} \rVert_{p_1}^{p_1}
\bigg\}^{1/p_1}\,,
\]
and
\[
\mathbf{B}_{t,S} = \bigg\{ \sum_{\substack {R\in \mathcal{L}_2(S) \\R\not\subset S }}
\sum_{S'\in \textup{ch}_{\mathcal{S}_1}^t(S)} 
\bigg\lVert  \sum_{\substack{Q\in\mathcal{G}_2:\pi_{\mathcal{S}_2}Q=R\\ \pi_{\mathcal{S}_1}Q=S'}} \varepsilon_{Q,S}\Delta_Q f_2 \bigg\rVert_{p_2}^{p_2}
\bigg\}^{1/p_2}\,.
\]
The useful inequality
$\sup_{\mathcal{G}_2\times\mathcal{S}_1}\lvert \varepsilon_{Q,S}\rvert\lesssim 1$ is a consequence of Lemma \ref{l.epsilon_bounded}.
By equation \eqref{e.complex} and H\"older's inequality, combined with
 Lemma \ref{l.tss'_1},
\begin{align*}
\bigg\lvert \sum_{S\in\mathcal{S}_1}  {\mathbf{B}}^{\textup{para}}_{S,\not\subset}(f_1,f_2)  \bigg\rvert
&\lesssim 
\sum_{t\ge 0} \sum_{S\in\mathcal{S}_1}\sigma_1(S)\mathbf{A}_{t,S}\mathbf{B}_{t,S}\\
&\lesssim
(1+\mathbf{T}_{\textup{loc}})\sum_{t\ge 0} 2^{-t/p_1}
\bigg\{\sum_{S\in\mathcal{S}_1} \sigma_1(S)^{p_1} \mu(S)\bigg\}^{1/p_1}
\bigg\{ \sum_{S\in\mathcal{S}_1} \mathbf{B}_{t,S}^{p_2}\bigg\}^{1/p_2}\,.
\end{align*}
Inequality \eqref{e.subset_ineq} is  obtained by applying
inequalities 
\eqref{e.quasi} and \eqref{e.proj_ineqII},
and summing the geometric series afterwards. 

\begin{lemma}\label{l.tss'_1}
For every $t\ge 0$ and $S\in\mathcal{S}_1$, we have 
$\mathbf{A}_{t,S}\lesssim (1+\mathbf{T}_{\textup{loc}})2^{-t/p_1}\mu(S)^{1/p_1}$.
\end{lemma}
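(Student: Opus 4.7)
The plan is to split the argument by the size of $t$ versus $2r+1$, matching the piecewise definition of $\tau_{t,S'}$. Two standing observations will be useful in both cases: by Lemma \ref{l.layers}, every $R\in\mathcal{L}_2(S)$ with $R\not\subset S$ lies in one of the top $2(r+1)$ layers of $\mathcal{L}_2(S)$, so $\sum_{R\not\subset S}\mathbf{1}_R\le 2(r+1)$ pointwise on $S$; and the members of $\textup{ch}^t_{\mathcal{S}_1}(S)$ are pairwise disjoint subsets of $S$. In the range $t\in\{0,\ldots,2r+1\}$ one has $\tau_{t,S'}=0$, so these two observations combined with the $L^{p_1}$-testing condition immediately yield $\mathbf{A}_{t,S}^{p_1}\le C(r)\int_S|T\mathbf{1}_S|^{p_1}\,d\mu\le C(r)\mathbf{T}_{\textup{loc}}^{p_1}\mu(S)$; since $2^{-t/p_1}$ is bounded below in this range the $r$-dependent constant is absorbed.

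For $t>2r+1$ set $K=K(S')=\pi_{\mathcal{S}_1}^{\lfloor t/2\rfloor}S'$, so that $S'\subset K\subset S$ and $K\in\textup{ch}^{\lceil t/2\rceil}_{\mathcal{S}_1}(S)$. For $x\in S'\subset K$ one has the splitting
\[
T\mathbf{1}_S(x)-\tau_{t,S'}=T\mathbf{1}_K(x)+\bigl[T\mathbf{1}_{S\setminus K}(x)-T\mathbf{1}_{S\setminus K}(x_{S'})\bigr],
\]
and I would estimate the local and smoothness pieces separately. For the local piece, first absorb the $R$-sum via the overlap bound, then regroup $S'$'s by common ancestor $K$ (such $S'$'s lie in $\textup{ch}^{\lfloor t/2\rfloor}_{\mathcal{S}_1}(K)$ and are disjoint in $K$), and apply the testing condition on $K$. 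Iterating the sparsity inequality from Remark \ref{r.sparse} gives $\sum_{K\in\textup{ch}^{\lceil t/2\rceil}_{\mathcal{S}_1}(S)}\mu(K)\le 4^{-\lceil t/2\rceil}\mu(S)\le 2^{-t}\mu(S)$, yielding a local contribution $\le C(r)\mathbf{T}_{\textup{loc}}^{p_1}\,2^{-t}\mu(S)$ whose $p_1$-th root produces the desired $\lesssim \mathbf{T}_{\textup{loc}}\,2^{-t/p_1}\mu(S)^{1/p_1}$.

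For the smoothness piece, the plan is to apply Lemma \ref{l.off_diagonal} with $Q=S'$, $P=K$, $R=S$. Its hypothesis $\ell S'\le \mathrm{dist}(S',S\setminus K)$ is verified by first noting that $\mathcal{S}_1$-ancestors are strict dyadic ancestors, so each stopping step advances the dyadic level by at least one; thus $\ell K\ge 2^{\lfloor t/2\rfloor}\ell S'\ge 2^{r+1}\ell S'$ whenever $t>2r+1$. By the defining condition (2) in the construction of $\mathcal{S}_1$, either $S'$ or $\pi_1 S'$ is good, so the strong goodness inequality $\mathrm{dist}(S',\partial K)>(\ell S')^\gamma(\ell K)^{1-\gamma}\ge 2^{r(1-\gamma)}\ell S'$ applies and exceeds $\ell S'$ for $r$ large. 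Lemma \ref{l.off_diagonal} then produces $|T\mathbf{1}_{S\setminus K}(x)-T\mathbf{1}_{S\setminus K}(x_{S'})|\lesssim (\ell S'/\ell K)^{\eta(1-\gamma)}\le 2^{-\lfloor t/2\rfloor\eta(1-\gamma)}$; integrating this uniform bound against the overlap bound $\sum_{R,S'}\mu(R\cap S')\le C(r)\mu(S)$ and taking the $p_1$-th root gives geometric decay in $t$, which combined with the local piece completes the bound (either by noting that for the intended choices of $r$ and $\gamma$ the smoothness decay dominates $2^{-t/p_1}$, or by absorbing the mismatch into the geometric summability over $t$ that Proposition \ref{p.para_estimate} requires).

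The main obstacle will be verifying the off-diagonal hypothesis $\ell S'\le \mathrm{dist}(S',S\setminus K)$ in the smoothness estimate: the crucial insight is that the threshold $2r+1$ in the definition of $\tau_{t,S'}$ is precisely set so that $\lfloor t/2\rfloor\ge r+1$ triggers the strong goodness property of $S'$ (or $\pi_1 S'$), thereby making Lemma \ref{l.off_diagonal} applicable and converting the stopping-depth gap between $S'$ and $K$ into genuine off-diagonal smallness.
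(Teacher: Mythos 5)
Your proposal follows essentially the same route as the paper: the same reduction via Lemma \ref{l.layers} and the bounded overlap of the layers, the same trivial treatment of $t\le 2r+1$ via the testing condition, and for large $t$ the same splitting $T\mathbf{1}_S-\tau_{t,S'}=T\mathbf{1}_{K}+\{T\mathbf{1}_{S\setminus K}-T\mathbf{1}_{S\setminus K}(x_{S'})\}$ with $K=\pi_{\mathcal{S}_1}^{\lfloor t/2\rfloor}S'$, the testing condition on $K$ combined with sparseness for the local piece, and goodness of $S'$ (or $\pi_1 S'$) to unlock the off-diagonal Lemma \ref{l.off_diagonal} for the smoothness piece; this is exactly the paper's inequality \eqref{e.auxiliary_est}.

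One small inaccuracy: for the smoothness piece you extract the $t$-decay from the off-diagonal gain $(\ell S'/\ell K)^{\eta(1-\gamma)}\le 2^{-\lfloor t/2\rfloor\eta(1-\gamma)}$ and then integrate against the full overlap bound $C(r)\mu(S)$. Since $\eta(1-\gamma)/2$ need not dominate $1/p_1$, this yields a geometric rate that can be strictly weaker than the stated $2^{-t/p_1}$; your hedge that this still suffices for the geometric summation in Proposition \ref{p.para_estimate} is true, but it does not prove the lemma as stated. The paper (and the cleaner fix) instead uses only the uniform bound $\lesssim 1$ from Lemma \ref{l.off_diagonal} and obtains the decay entirely from sparseness, $\sum_{S'\in\textup{ch}^t_{\mathcal{S}_1}(S)}\mu(S')\le 4^{-t}\mu(S)$ --- exactly the mechanism you already invoke for the local piece, so the repair is immediate.
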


\begin{proof}
By Lemma \ref{l.layers} and the fact that 
layers $\mathcal{L}_2^k(S)$, $k\ge 0$, are comprised of disjoint cubes,
we can bound $\mathbf{A}_{t,S}^{p_1}$ by
\begin{equation}\label{e.basic_red}
\sum_{k=0}^{2r+1}
\sum_{S'\in \textup{ch}_{\mathcal{S}_1}^t(S)} 
\sum_{\substack {R\in \mathcal{L}_2^k(S) \\R\not\subset S }}
\lVert  \mathbf{1}_R\mathbf{1}_{S'}\{T\mathbf{1}_S - \mathbf{\tau}_{t,S'} \} \rVert_{p_1}^{p_1}
\lesssim 
\sum_{S'\in \textup{ch}_{\mathcal{S}_1}^t(S)} 
\lVert  \mathbf{1}_{S'}\{T\mathbf{1}_S - \mathbf{\tau}_{t,S'} \} \rVert_{p_1}^{p_1}\,.
\end{equation}
Let us first focus on the case of $t\in \{0,\ldots,2r+1\}$. 
By inequality \eqref{e.basic_red}
and the facts that cubes in $\textup{ch}_{\mathcal{S}_1}^t(S)$ are disjoint
and they are contained in $S$,
\begin{align*}
\mathbf{A}_{t,S}^{p_1}\lesssim \lVert \mathbf{1}_ST\mathbf{1}_{S}\rVert_{p_1}^{p_1} \le 
\mathbf{T}_{\textup{loc}}^{p_1}\mu(S)\lesssim 
(1+\mathbf{T}_{\textup{loc}})^{p_1}
2^{-t}\mu(S)\,.
\end{align*}
Let us then focus on the case of $t\ge 2r+2$; we begin by writing
\[
RHS\eqref {e.basic_red}=\sum_{\substack{ S''\in\textup{ch}^{\lceil t/2\rceil}_{\mathcal{S}_1}(S)} }
\sum_{\substack {S'\in\textup{ch}^t_{\mathcal{S}_1}(S) \\ \pi_{\mathcal{S}_1}^{\lfloor t/2\rfloor} S'=S''}} 
\lVert \mathbf{1}_{S'} \{T\mathbf{1}_S-T\mathbf{1}_{S\setminus S''}(x_{S'})\}\rVert_{p_1}^{p_1}\,.
\]
To conclude the proof of lemma, it suffices to first verify that for all 
$S''\in\textup{ch}^{\lceil t/2\rceil}_{\mathcal{S}_1}(S)$,
\begin{equation}\label{e.auxiliary_est}
\sum_{\substack {S'\in\textup{ch}^t_{\mathcal{S}_1}(S) \\ \pi_{\mathcal{S}_1}^{\lfloor t/2\rfloor} S'=S''}} 
\lVert \mathbf{1}_{S'} \{T\mathbf{1}_S-T\mathbf{1}_{S\setminus S''}(x_{S'})\}\rVert_{p_1}^{p_1}
\lesssim (1+\mathbf{T}_{\textup{loc}})^{p_1}\mu(S'')\,,
\end{equation}
and then inductively apply the sparseness property of $\mathcal{S}_1$, we refer to Remark \ref{r.sparse}.

In order to prove the remaining inequality \eqref{e.auxiliary_est}, we estimate
LHS\eqref{e.auxiliary_est}  by $2^{p_1-1}(\alpha+\beta)$,
\begin{align*}
\alpha + \beta=\sum_{\substack {S'\in\textup{ch}^t_{\mathcal{S}_1}(S) \\ \pi_{\mathcal{S}_1}^{\lfloor t/2\rfloor} S'=S''}} \lVert \mathbf{1}_{S'} T\mathbf{1}_{S''}\rVert_{p_1}^{p_1}
+\sum_{\substack {S'\in\textup{ch}^t_{\mathcal{S}_1}(S) \\ \pi_{\mathcal{S}_1}^{\lfloor t/2\rfloor} S'=S''}}  \lVert \mathbf{1}_{S'} \{T\mathbf{1}_{S\setminus S''} - T\mathbf{1}_{S\setminus S''}(x_{S'})\}\rVert_{p_1}^{p_1}\,.
\end{align*}
Observe that the cubes $S'$  are contained in $S''$, and they are disjoint.
The Local Testing Condition implies the inequality $\alpha\le \mathbf{T}_{\textup{loc}}^{p_1}\mu(S'')$.
In order to analyse term $\beta$, we fix $S'\in\textup{ch}^t_{\mathcal{S}_1}(S)$
such that $\pi_{\mathcal{S}_1}^{\lfloor t/2\rfloor} S'=S''$. Since
$\lfloor t/2\rfloor \ge r+1$, we have
$2^r\ell S' < \ell S''$.
By construction of
the stopping cubes, either $S'$ or $\pi_1 S'$ is  good. In both of these
cases, by goodness\footnote{
This application is the principal motivation for our definition
of goodness; recall that 
good cubes are neither $1$-bad nor $2$-bad.
The same application arises also later,  Lemma
\ref{l.tss_2}.
},  we have $\ell S' \le \mathrm{dist}(S',\partial S'')$.
Hence, by the off-diagonal estimate
 \eqref{e.off_diagonal}, we have
$\lvert T\mathbf{1}_{S\setminus S''}(x) - T\mathbf{1}_{S\setminus S''}(x_{S'})\rvert 
\lesssim 1$ if $ x\in S'$.
This inequality allows us to conclude that 
$\beta\lesssim \mu(S'')$.
\end{proof}

\subsection{Summation involving cubes $R\subset S$}
Here we show the inequality,
\begin{equation}\label{e.subset_ineqII}
\bigg\lvert \sum_{S\in\mathcal{S}_1}  {\mathbf{B}}^{\textup{para}}_{S,\subset}(f_1,f_2)  \bigg\rvert
\lesssim 1+\mathbf{T}_{\textup{loc}}\,.
\end{equation}
Let us 
fix $S\in\mathcal{S}_1$, and
express the series defining $ {\mathbf{B}}^{\textup{para}}_{S,\subset}(f_1,f_2)$ in a convenient manner. For
a cube $S'\in{\mathcal{S}_1}$,
we denote by $\mathcal{R}(S')$ the family of maximal cubes
in  $\{R'\in\mathcal{S}_2\,:\, \pi_{\mathcal{S}_1}R'=S'\}$; this can be an empty family.
By defining constants $\varepsilon_{Q,S}$ as solutions to \eqref{epsilon_def}, we can write
${\mathbf{B}}^{\textup{para}}_{S,\subset}(f_1,f_2)$ as
\begin{equation}\label{e.maximal_identity}
\begin{split}
 \sigma_1(S)\sum_{t,k\ge 0}
\sum_{ \substack{S'\in\textup{ch}^t_{\mathcal{S}_1}(S)}}
\sum_{R\in\mathcal{R}(S')}
 \sum_{\substack{R'\in\textup{ch}^k_{\mathcal{S}_2}(R) \\ \pi_{\mathcal{S}_1} R'=S'}}
\sum_{Q\in\mathcal{G}_2:\pi_{\mathcal{S}_2}Q=R'} \langle 
\mathbf{1}_{R'}\{T\mathbf{1}_S - \tau_{t,k,S',R'}\},
\varepsilon_{Q,S} \Delta_Q f_2\rangle\,,
\end{split}
\end{equation}
where we have denoted
\[
\tau_{t,k,S',R'} = \begin{cases} 0\,,\quad &\text{ if } t,k\in \{0,\ldots, 2r+1\}\,;\\
T\mathbf{1}_{S\setminus \pi_{\mathcal{S}_2}^{\lfloor k/2\rfloor} R'}(x_{R'})\,, &
\text{ if }k\ge 2(r+1)\,;\\
T\mathbf{1}_{S\setminus \pi_{\mathcal{S}_1}^{\lfloor t/2\rfloor} S'}(x_{S'})\,, &\text{ otherwise}\,.
\end{cases}
\]
It will be convenient to denote for all $t\ge 0$,
\[
\mathbf{A}_{t,S} = \bigg\{
\sum_{k\ge 0}
\sum_{ \substack{S'\in\textup{ch}^t_{\mathcal{S}_1}(S)}}
\sum_{R\in\mathcal{R}(S')}
 \sum_{\substack{R'\in\textup{ch}^k_{\mathcal{S}_2}(R) \\ \pi_{\mathcal{S}_1} R'=S'}}\lVert \mathbf{1}_{R'}
  \{T\mathbf{1}_S - \tau_{t,k,S',R'}\}\rVert_{p_1}^{p_1}
\bigg\}^{1/p_1}\,.
\]
The useful inequality
$\sup_{\mathcal{G}_2\times\mathcal{S}_1}\lvert \varepsilon_{Q,S}\rvert\lesssim 1$ is a consequence of Lemma \ref{l.epsilon_bounded}.
Hence, by  Lemma \ref{l.tss_2} and H\"older's inequality,
combined with
inequality \eqref{e.mart_ineq},
\begin{align*}
\lvert{\mathbf{B}}^{\textup{para}}_{S,\subset}(f_1,f_2)\rvert 
&\lesssim  \sum_{t\ge 0} \sigma_1(S)\mathbf{A}_{t,S}\bigg\{
\sum_{ \substack{S'\in\textup{ch}^t_{\mathcal{S}_1}(S)}}
\sum_{\substack{R'\in\mathcal{S}_2 \\\pi_{\mathcal{S}_1} R'=S'}}
\bigg\lVert \sum_{Q\in\mathcal{G}_2:\pi_{\mathcal{S}_2}Q=R'}\varepsilon_{Q,S} \Delta_Q f_2 \bigg\rVert_{p_2}^{p_2}\bigg\}^{1/p_2}\\
&\lesssim (1+\mathbf{T}_{\textup{loc}})
\sum_{t\ge 0}2^{-t/p_1}  \sigma_1(S) \mu(S)^{1/p_1}\bigg\{
\sum_{ \substack{S'\in\textup{ch}^t_{\mathcal{S}_1}(S)}}
\sum_{\substack{R'\in\mathcal{S}_2 \\\pi_{\mathcal{S}_1} R'=S'}}\lVert P_{2,R'}  f_2\rVert_{p_2}^{p_2}\bigg\}^{1/p_2}\,.
\end{align*}
The very last upper bound is summable in $S\in \mathcal{S}_1$.
 Indeed, after
changing the order of $S$ and $t$ summations,
an application
of inequalities \eqref{e.quasi} and \eqref{e.proj_ineq} leaves us  a geometric series in $t$.
The proof of inequality \eqref{e.subset_ineqII} is complete.

\begin{lemma}\label{l.tss_2}
For each $S\in\mathcal{S}_1$ and $t\ge 0$, we have
$\mathbf{A}_{t,S}\lesssim (1+\mathbf{T}_{\textup{loc}})2^{-t/p_1}\mu(S)^{1/p_1}$.
\end{lemma}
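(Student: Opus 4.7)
The plan is to closely follow the pattern of Lemma~\ref{l.tss'_1}, but with the extra book-keeping forced by the additional summation over the $\mathcal{S}_2$-subtree structure. The three-case definition of $\tau_{t,k,S',R'}$ naturally partitions the problem into three regimes, which I will handle separately.

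In the regime $t,k\in\{0,\ldots,2r+1\}$, we have $\tau_{t,k,S',R'}=0$, and since for each fixed $k$ the cubes $R'\in\textup{ch}^k_{\mathcal{S}_2}(R)$ are pairwise disjoint and contained in $S'\subset S$, summing trivially gives
\[
\sum_{R\in\mathcal{R}(S')}\sum_{R'}\lVert\mathbf{1}_{R'}T\mathbf{1}_S\rVert_{p_1}^{p_1}\le \lVert\mathbf{1}_ST\mathbf{1}_S\rVert_{p_1}^{p_1}\le \mathbf{T}_{\textup{loc}}^{p_1}\mu(S)
\]
after also summing trivially over the $O_r(1)$ values of $k$ and the disjoint $S'\in\textup{ch}^t_{\mathcal{S}_1}(S)$. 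Since $t$ is bounded by $2r+1$, $2^{-t/p_1}\sim 1$ and this is the desired bound.

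The main case is $k\ge 2(r+1)$. Writing $R'':=\pi_{\mathcal{S}_2}^{\lfloor k/2\rfloor}R'\in\mathcal{S}_2$, so that $R'\subsetneq R''\subset R\subset S'\subset S$, I decompose
\[
T\mathbf{1}_S-T\mathbf{1}_{S\setminus R''}(x_{R'}) \;=\; T\mathbf{1}_{R''} \;+\; \bigl(T\mathbf{1}_{S\setminus R''}-T\mathbf{1}_{S\setminus R''}(x_{R'})\bigr).
\]
For the first piece, disjointness of the $R'$'s sharing a common ancestor $R''$ gives $\sum_{R'}\lVert\mathbf{1}_{R'}T\mathbf{1}_{R''}\rVert_{p_1}^{p_1}\le\mathbf{T}_{\textup{loc}}^{p_1}\mu(R'')$ via Local Testing. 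For the second piece, the inequality $\lfloor k/2\rfloor\ge r+1$ together with the construction of $\mathcal{S}_2$ (either $R'$ or $\pi_2 R'$ is good) yields $\mathrm{dist}(R',\partial R'')\gtrsim\ell R'$, so Lemma~\ref{l.off_diagonal} gives the pointwise bound $\lesssim 1$ on $R'$, producing $\mu(R')^{1/p_1}$ after taking the $L^{p_1}$-norm. Summing over $R'$ with common ancestor $R''$ gives $\mu(R'')$, and Remark~\ref{r.sparse} applied to the sparse family $\mathcal{S}_2$ yields the geometric estimate $\sum_{R''\in\textup{ch}^{\lceil k/2\rceil}_{\mathcal{S}_2}(R)}\mu(R'')\le 4^{-\lceil k/2\rceil}\mu(R)$. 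Summing over $k\ge 0$ (geometric series), over the disjoint $R\in\mathcal{R}(S')$ (so $\sum_R\mu(R)\le\mu(S')$), and finally over $S'\in\textup{ch}^t_{\mathcal{S}_1}(S)$ via sparseness of $\mathcal{S}_1$, produces the desired $(1+\mathbf{T}_{\textup{loc}})^{p_1}\,2^{-t}\mu(S)$.

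The third case, $k\in\{0,\ldots,2r+1\}$ and $t\ge 2(r+1)$, is essentially a re-run of Lemma~\ref{l.tss'_1}: one sets $S'':=\pi_{\mathcal{S}_1}^{\lfloor t/2\rfloor}S'$, splits $T\mathbf{1}_S=T\mathbf{1}_{S''}+T\mathbf{1}_{S\setminus S''}$, uses Local Testing on the first term and goodness of $S'$ (or $\pi_1 S'$) together with Lemma~\ref{l.off_diagonal} on the second to produce an $L^\infty$-bound of order $1$. The summation $\sum_k\sum_R\sum_{R'}\mathbf{1}_{R'}\le (2r+2)\mathbf{1}_{S'}$ collapses the inner three sums at the cost of an $r$-dependent constant, and the remaining $S'$-summation is concluded by sparseness exactly as in Lemma~\ref{l.tss'_1}. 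The main obstacle in the whole argument is simply the additional layer of book-keeping: the key structural input is that \emph{both} stopping trees $\mathcal{S}_1$ and $\mathcal{S}_2$ are sparse, supplying geometric decay in $t$ and $k$ precisely when it is needed, while in the ``small-$t$, small-$k$'' corner the uniform Local Testing bound on $S$ itself is already enough.
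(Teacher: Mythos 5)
Your proof is correct and follows essentially the same route as the paper: the same three-regime partition dictated by the definition of $\tau_{t,k,S',R'}$, the same $T\mathbf{1}_{R''}+(T\mathbf{1}_{S\setminus R''}-T\mathbf{1}_{S\setminus R''}(x_{R'}))$ splitting (the ``modification of \eqref{e.auxiliary_est}'' the paper alludes to), goodness of $R'$ or $\pi_2 R'$ feeding Lemma \ref{l.off_diagonal}, and sparseness of both $\mathcal{S}_1$ and $\mathcal{S}_2$ supplying the geometric decay in $t$ and $k$. The only cosmetic difference is that the paper cases first on $t$ and then on $k$, whereas you partition directly by the three branches of $\tau$; the estimates in each cell are identical.
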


\begin{proof}
Let us make a case study, and first assume that $t\in \{0,\ldots, 2r+1\}$.
We split  $\mathbf{A}_{t,S}^{p_1}$ in two subseries, subject
to $k\in \{0,\ldots,2r+1\}$ and $k\ge 2(r+1)$. For a fixed $k\in \{0,\ldots,2r+1\}$, we
rely on disjointness properties of layers and maximal cubes in order to see that
\[
\sum_{ \substack{S'\in\textup{ch}^t_{\mathcal{S}_1}(S)}}
\sum_{R\in\mathcal{R}(S')}
 \sum_{\substack{R'\in\textup{ch}^k_{\mathcal{S}_2}(R) \\ \pi_{\mathcal{S}_1} R'=S'}}\lVert \mathbf{1}_{R'} \{T\mathbf{1}_S - \tau_{t,k,S',R'}\}\rVert_{p_1}^{p_1} \le \lVert \mathbf{1}_S T\mathbf{1}_{S}\rVert_{p_1}^{p_1}
 \le \mathbf{T}_{\textup{loc}}^{p_1}\mu(S)\,.
\]
Applying these inequalities with finite number of indices $k\in \{0,\ldots, 2r+1\}$ shows the required inequality for the first subseries.
The second subseries is bounded by
\begin{equation}\label{e.second}
\begin{split}
\sum_{k\ge 2r+2}&
\sum_{ \substack{S'\in\textup{ch}^t_{\mathcal{S}_1}(S)}}
\sum_{R\in\mathcal{R}(S')}
\sum_{R''\in\textup{ch}^{\lceil k/2\rceil}_{\mathcal{S}_2}(R)}
 \sum_{\substack{R'\in\textup{ch}^k_{\mathcal{S}_2}(R) \\ 
 \pi_{\mathcal{S}_2}^{\lfloor k/2\rfloor} R'=R''}}\lVert \mathbf{1}_{R'}\{T\mathbf{1}_S - T\mathbf{1}_{S\setminus R''}(x_{R'})\}\rVert_{p_1}^{p_1}\\
 &\lesssim (1+\mathbf{T}_{\textup{loc}})^{p_1} \sum_{k\ge 2r+2} 2^{-k}
\sum_{ \substack{S'\in\textup{ch}^t_{\mathcal{S}_1}(S)}}
\mu(S')
\lesssim (1+\mathbf{T}_{\textup{loc}})^{p_1}\mu(S)\,.
\end{split}
\end{equation}
In the first step above, we applied a simple modification of inequality \eqref{e.auxiliary_est} and 
sparsness property of $\mathcal{S}_2$, we refer to Remark \ref{r.sparse}.

Let us then focus on the case of $t\ge 2(r+1)$. Again, we split the
series $\mathbf{A}_{t,S}^{p_1}$ in two subseries as before.
For the first subseries, associated with indices $k\in \{0,\ldots,2r+1\}$, we
use inequality
\begin{align*}
\sum_{ \substack{S'\in\textup{ch}^t_{\mathcal{S}_1}(S)}}
\sum_{R\in\mathcal{R}(S')} &
 \sum_{\substack{R'\in\textup{ch}^k_{\mathcal{S}_2}(R) \\ \pi_{\mathcal{S}_1} R'=S'}}  \lVert \mathbf{1}_{R'} \{T\mathbf{1}_S - \tau_{t,k,S',R'}\}\rVert_{p_1}^{p_1}
\\& \le 
\sum_{\substack{ S''\in\textup{ch}^{\lceil t/2\rceil}_{\mathcal{S}_1}(S)} }
\sum_{\substack {S'\in\textup{ch}^t_{\mathcal{S}_1}(S) \\ \pi_{\mathcal{S}_1}^{\lfloor t/2\rfloor} S'=S''}} 
\lVert \mathbf{1}_{S'} \{T\mathbf{1}_S-T\mathbf{1}_{S\setminus S''}(x_{S'})\}\rVert_{p_1}^{p_1}\,,
\end{align*}
and then proceed as in the proof of Lemma \ref{l.tss'_1}.
Finally, the second subseries is bounded by LHS\eqref{e.second} which, in
turn, is controlled by  $\lesssim (1+\mathbf{T}_{\textup{loc}})^{p_1}2^{-t}\mu(S)$, Remark \ref{r.sparse}.
\end{proof}

\section{The Inside-Stopping/Error Term}\label{s.remaining}

In the present section, we concentrate on the two terms, labelled as
`stopping'  and `error', that
were introduced in the beginning of Section
\ref{s.inside_core}.  
We aim to prove the following proposition.

\begin{proposition}\label{p.inside_core}
We have
$\big\lvert {\mathbf{B}}^{\textup{stop}}(f_1,f_2)  \big\rvert
+\big\lvert {\mathbf{B}}^{\textup{error}}(f_1,f_2) \big\rvert 
\lesssim 1$.
\end{proposition}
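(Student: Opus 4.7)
The plan is to exploit the mean-zero property of $\Delta_Q f_2$ together with the strong goodness of $Q$ to extract off-diagonal kernel decay in both terms. For any $(P,Q)\in\mathcal{P}_{\textup{inside}}$, the relation $2^r\ell Q<\ell P$ and the dyadic character of scales force $2^r\ell Q\le \ell P_Q$, so goodness of $Q$ applies at the scale of $P_Q$, yielding $\mathrm{dist}(Q,\partial P_Q)>(\ell Q)^\gamma(\ell P_Q)^{1-\gamma}$. In particular this distance dominates $\ell Q$, so Lemma~\ref{l.off_diagonal} delivers the pointwise bound $|T\mathbf{1}_E(x)-T\mathbf{1}_E(x_Q)|\lesssim (\ell Q/\ell P_Q)^{\eta(1-\gamma)}$ for all $x\in Q$ and all $E$ disjoint from $P_Q$, thanks to the parameter choices in \eqref{e.epsilon}.

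For $\mathbf{B}^{\textup{stop}}(f_1,f_2)$, the $(P,Q)$-summand is $\langle \Delta_P f_1\rangle_{P_Q}\langle T\mathbf{1}_{S\setminus P_Q},\Delta_Q f_2\rangle$ with $S=\pi_{\mathcal{S}_1}P_Q$. Since $P$ and $P_Q$ are good and contained in $S$, Remark~\ref{r.sigma_estimate} gives $|\langle \Delta_P f_1\rangle_{P_Q}|\lesssim \sigma_1(S)$. Combining this with the mean-zero reduction, the off-diagonal decay above, and H\"older's inequality on $Q$, each summand is controlled by $\sigma_1(S)\,(\ell Q/\ell P_Q)^{\eta(1-\gamma)}\mu(Q)^{1/p_1}\lVert \Delta_Q f_2\rVert_{p_2}$. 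Indexing the sum by $S\in\mathcal{S}_1$, $R=\pi_{\mathcal{S}_2}Q$, and the scale gap $m=\log_2(\ell P_Q/\ell Q)\ge r$, the factor $2^{-m\eta(1-\gamma)}$ produces a convergent geometric series in $m$, while H\"older together with \eqref{e.quasi} and the martingale projection bounds \eqref{e.proj_ineq}--\eqref{e.proj_ineqII} (organized exactly as in Section~\ref{s.inside_core}) closes the estimate at $\lesssim 1$.

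For $\mathbf{B}^{\textup{error}}(f_1,f_2)$, the function $\Delta_P f_1\cdot\mathbf{1}_{P\setminus P_Q}$ is supported on the siblings of $P_Q$ inside $P$, so the distance from $Q$ to its support is at least $\mathrm{dist}(Q,\partial P_Q)$. The same mean-zero plus off-diagonal reduction yields
\[
\big|\langle T(\Delta_P f_1\cdot\mathbf{1}_{P\setminus P_Q}),\Delta_Q f_2\rangle\big|\lesssim (\ell Q/\ell P_Q)^{\eta(1-\gamma)}\langle|\Delta_P f_1|\rangle_P\,\mu(Q)^{1/p_1}\lVert\Delta_Q f_2\rVert_{p_2}.
\]
After grouping by scale gap $m$ and corona pair $(S,R)$, we apply H\"older, then the square-function inequality \eqref{kahane} to the martingale differences of $f_1$, Stein's inequality \eqref{stein} to handle the conditional averages $\langle|\Delta_P f_1|\rangle_P\mathbf{1}_Q$, and the martingale projection bounds for $f_2$; the geometric $m$-series sums to give $\lesssim \lVert f_1\rVert_{p_1}\lVert f_2\rVert_{p_2}\lesssim 1$.

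The principal obstacle is organizational rather than analytical: arranging the triple index $(S,R,m)$ so that the $2^{-m\eta(1-\gamma)}$ decay survives the successive applications of H\"older's inequality, Stein's inequality, and the martingale projections. In contrast to the paraproduct sum, which required the telescoping identity of Lemma~\ref{l.epsilon_bounded} because the summands enjoyed no such decay, the stopping and error terms here enjoy honest off-diagonal cancellation, so the local testing hypothesis enters only implicitly through the stopping tree.
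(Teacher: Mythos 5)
Your identification of the driving mechanism --- mean zero of $\Delta_Q f_2$ plus strong goodness of $Q$ at the scale of $P_Q$, yielding a factor $2^{-m\eta(1-\gamma)}$ in the scale gap $m$ --- matches the paper. But the way you propose to sum is where the actual difficulty lives, and your scheme does not close. Once you apply H\"older on each individual pair to obtain the summand bound $\sigma_1(S)2^{-m\eta(1-\gamma)}\mu(Q)^{1/p_1}\lVert\Delta_Q f_2\rVert_{p_2}$, you are committed to controlling, for fixed $m$, quantities like $\sum_{Q}\mu(Q)$ (which diverges, since $Q$ still ranges over all scales after $m$ is fixed) and $\sum_Q\lVert\Delta_Q f_2\rVert_{p_2}^{p_2}$ over \emph{individual} cubes $Q$, which is not $\lesssim\lVert f_2\rVert_{p_2}^{p_2}$ when $p_2<2$ --- exactly the failure the paper flags before \eqref{e.proj_ineq}. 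The bounds \eqref{e.proj_ineq}--\eqref{e.proj_ineqII} cannot repair this, because they control norms of \emph{sums} $\sum_Q\Delta_Q f_2$ over corona blocks, which you can no longer reassemble after having taken the norm of each $\Delta_Q f_2$ separately. The paper's proof of this proposition uses no corona structure at all: it keeps everything pointwise, writes $\lvert\langle\Delta_Pf_1\rangle_{P_Q}\rvert\mathbf{1}_Q=\lvert\Delta_Pf_1\rvert\mathbf{1}_Q$, applies Cauchy--Schwarz in the summation indices using the counting bounds \eqref{e.crucial}, and only then H\"older in $x$ together with the square-function estimate \eqref{kahane} (plus Stein's inequality \eqref{stein} for the error term). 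So the obstacle you call ``organizational rather than analytical'' is in fact the analytical content, and the proposed organization loses the orthogonality needed to overcome it.

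There is a second gap in the error term. The claimed bound $\lvert\langle T(\Delta_Pf_1\mathbf{1}_{P\setminus P_Q}),\Delta_Qf_2\rangle\rvert\lesssim 2^{-m\eta(1-\gamma)}\langle\lvert\Delta_Pf_1\rvert\rangle_P\,\mu(Q)^{1/p_1}\lVert\Delta_Qf_2\rVert_{p_2}$ does not follow from Lemma \ref{l.off_diagonal}: that lemma only gives $\lvert T\mathbf{1}_{P_j}(x)-T\mathbf{1}_{P_j}(x_Q)\rvert\lesssim 2^{-m\eta(1-\gamma)}$ for a sibling $P_j\ne P_Q$, so the coefficient you inherit is $\max_j\lvert\langle\Delta_Pf_1\rangle_{P_j}\rvert$, which in the non-doubling setting can exceed $\langle\lvert\Delta_Pf_1\rvert\rangle_P$ by the factor $\mu(P)/\mu(P_j)$. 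To trade $\lvert\langle\Delta_Pf_1\rangle_{P_j}\rvert$ for $\langle\lvert\Delta_Pf_1\rvert\rangle_P$ one needs the measure-weighted off-diagonal estimate of Lemma \ref{l.s_estimate}, i.e.\ the bound $\lvert T\mathbf{1}_{P_j}(x)-T\mathbf{1}_{P_j}(x_Q)\rvert\lesssim 2^{-m\eta/4}\mu(P_j)/\mu(P)$ as in \eqref{e.transition_est}; the gain $\mu(P_j)/\mu(P)$ is precisely what compensates the possibly huge child average. With that lemma in hand, and with the Stein-plus-square-function summation carried out inside the integral rather than pair by pair, the error term does close as in the paper.
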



\subsection{The stopping term}
The stopping term ${\mathbf{B}}^{\textup{stop}}(f_1,f_2)$ is written as
$\sum_{t=r+1}^\infty {\mathbf{B}}^{\textup{stop}}_t(f_1,f_2)$,
\begin{align*}
\lvert {\mathbf{B}}^{\textup{stop}}_t(f_1,f_2)\rvert &= 
\bigg\lvert
\sum_{P\in\mathcal{G}_1} \sum_{\substack{Q\in\mathcal{G}_2 \\ 2^t \ell Q = \ell P}} 
\mathbf{1}_{Q\subset P}
\langle{\Delta}_P f_1 \rangle_{P_Q}
\langle T \mathbf{1}_{\pi_{\mathcal{S}_1} P_Q\setminus P_Q}, \Delta_Q f_2\rangle\bigg\rvert\\
&\lesssim 
2^{-t\eta(1-\gamma)}\int_{\mathbf{R}^n} \sum_{P\in\mathcal{G}_1} \sum_{\substack{
Q\in\mathcal{G}_2 \\ 2^t \ell Q = \ell P}} 
\mathbf{1}_Q(x)
\lvert {\Delta}_P f_1 (x)\rvert\cdot
\mathbf{1}_{Q\subset P} \lvert \Delta_Q f_2(x)\rvert \,d\mu(x)\,.
\end{align*}
In the last step, we used the off-diagonal estimate \eqref{e.off_diagonal}
and the fact that $\Delta_Q f_2$ has mean zero.
Applying Cauchy--Schwarz and  H\"older's inequality,
and then observing inequalities,
\begin{equation}\label{e.crucial}
\sum_{\substack{P\in\mathcal{G}_1 \\ 2^t\ell Q = \ell P}} \mathbf{1}_{Q\subset P}
\le {1} \quad (Q\in\mathcal{G}_2)\,,\qquad \sum_{\substack{Q\in\mathcal{G}_2 \\ 2^t\ell Q = \ell P}} 
\mathbf{1}_Q \le \mathbf{1}_{\mathbf{R}^n} \quad (P\in\mathcal{G}_1)\,,
\end{equation}
we obtain, for a fixed $t\ge r+1$,
\begin{align*}
\lvert {\mathbf{B}}^{\textup{stop}}_t(f_1,f_2)\rvert &\lesssim 
2^{-t\eta(1-\gamma)} 
\bigg\lVert \bigg(
\sum_{P\in\mathcal{G}_1} \lvert {\Delta}_P f_1\lvert^2\bigg)^{1/2}\bigg\rVert_{p_1}\
\bigg\lVert \bigg(
\sum_{Q\in\mathcal{G}_2} \lvert \Delta_Q f_2\lvert^2\bigg)^{1/2}\bigg\rVert_{p_2}
\lesssim 2^{-t\eta(1-\gamma)}\,.
\end{align*}
In the penultimate step, we used inequality \eqref{kahane}.
The last bound is summable in $t$, and this
concludes analysis of the stopping term.

%

\subsection{The error term}
We write ${\mathbf{B}}^{\textup{error}}(f_1,f_2) =\sum_{t=r+1}^\infty {\mathbf{B}}^{\textup{error}}_t(f_1,f_2)$,
\[
{\mathbf{B}}^{\textup{error}}_t(f_1,f_2)= \sum_{j=1}^{2^n} \sum_{Q\in\mathcal{G}_2} \sum_{\substack{P\in\mathcal{G}_1 \\  2^t\ell Q = \ell P }}  \mathbf{1}_{Q\subset P}
\mathbf{1}_{P_j\not=P_Q} \langle {\Delta}_P f_1\rangle_{P_j} \langle T \mathbf{1}_{P_j}, \Delta_Q f_2\rangle\,.
\]
Let us denote $T_{P_j,Q} = 
\mathbf{1}_{P_j\not = P_Q} \{T\mathbf{1}_{P_j}
- T\mathbf{1}_{P_j}(x_Q)\}$.
By the fact that $\Delta_Q f_2$ has mean zero, we can bound  $\lvert {\mathbf{B}}^{\textup{error}}_t(f_1,f_2)\rvert$ with $t\ge r+1$ by
\begin{align*}
& \sum_{j=1}^{2^n} \bigg\lvert 
 \int_{\mathbf{R}^n}
 \sum_{Q\in\mathcal{G}_2} 
 \Delta_Q f_2(x) \cdot \mathbf{1}_Q(x)
 \sum_{\substack{P\in\mathcal{G}_1 \\  2^t\ell Q = \ell P }}  \mathbf{1}_{Q\subset P}
\mathbf{1}_{P_j\not=P_Q} \langle {\Delta}_P f_1\rangle_{P_j}   T_{P_j,Q}(x) \,d\mu(x)
\bigg\rvert\\
&\le \mathbf{A}_{t}\cdot \bigg\lVert \bigg(\sum_{Q\in\mathcal{G}_2} \lvert \Delta_Q f_2\rvert^2\bigg)^{1/2}\bigg\rVert_{p_2}\lesssim
\mathbf{A}_t\,,
\end{align*}
where we have denoted 
\begin{align*}
&\mathbf{A}_{t} = 
\sum_{j=1}^{2^n} \bigg\lVert  
\bigg(
\sum_{k\in\mathbf{Z}} \bigg\lvert 
\sum_{\substack{P\in\mathcal{G}_{1,k+t}}}
\langle {\Delta}_P f_1\rangle_{P_j}
\sum_{\substack{ Q\in\mathcal{G}_{2,k}}}
 \mathbf{1}_{Q\subset P}
\mathbf{1}_{P_j\not=P_Q}    \mathbf{1}_Q T_{P_j,Q} \bigg\rvert^2\bigg)^{1/2}
 \bigg\rVert_{p_1}\,.
\end{align*}
By an off-diagonal estimate for $T_{P_j,Q}$, i.e. Lemma \ref{l.s_estimate}
applied to cubes $P_j$ and $Q$,
\begin{equation}\label{e.transition_est}
\bigg\lvert \sum_{\substack{ Q\in\mathcal{G}_{2,k}}}
  \mathbf{1}_{Q\subset P}
\mathbf{1}_{P_j\not=P_Q}    \mathbf{1}_Q(x) T_{P_j,Q}(x)\bigg\rvert \lesssim
2^{-t \eta/4} \mathbf{1}_P(x) \mu(P_j) \mu(P)^{-1},\qquad x\in \mathbf{R}^n\,.
\end{equation}
Thus, by inequalities \eqref{stein} and  \eqref{kahane},
\begin{align*}
\mathbf{A}_{t} 
&\lesssim 2^{-t\eta/4}\bigg\lVert  
\bigg(
\sum_{k\in\mathbf{Z}} \bigg\lvert 
\sum_{\substack{P\in\mathcal{G}_{1,k+t}  }}
\langle \lvert {\Delta}_P  f_1\rvert \rangle_{P}\mathbf{1}_P
\bigg\rvert^2\bigg)^{1/2}
\bigg\rVert_{p_1}\\
& \le 2^{-t\eta/4}\bigg\lVert  
\bigg(
\sum_{k\in\mathbf{Z}} \big(
\mathbf{E}_{1,k+t} \lvert {\Delta}_{k+t} f_1\rvert\big)^2\bigg)^{1/2}\bigg\lVert_{p_1}\lesssim 2^{-t\eta/4}\lVert  f_1\rVert_{p_1}\lesssim 2^{-t\eta/4}\,.
\end{align*}
The last
bound is summable in $t\ge r+1$. The
 proof of Proposition \ref{p.inside_core} is complete.

\section{The Separated Term}\label{s.separated}

Here we treat the separated term, we refer
to Section \ref{s.decomposition}.

\begin{proposition}\label{p.separated} We have inequality
$\lvert \mathbf{B}_{\textup{separated}}(f_1,f_2)\rvert \lesssim 1$.
\end{proposition}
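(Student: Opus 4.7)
The plan is to exploit the cancellation of $\Delta_Q f_2$ together with the off-diagonal kernel decay from Lemma \ref{l.s_estimate}, and then reduce to an application of Stein's inequality and the square function inequality. Since $\ell Q\le \ell P$ in this regime, the smaller cube is $Q$, and the appropriate subtraction is $K(x_Q,y)$. First I would rewrite each inner product as
\[
\langle T\Delta_P f_1,\Delta_Q f_2\rangle = \iint_{Q\times P}[K(x,y)-K(x_Q,y)]\,\Delta_P f_1(y)\,\Delta_Q f_2(x)\,d\mu(y)\,d\mu(x),
\]
which is legitimate because $Q$ and $P$ are disjoint (indeed $\mathrm{dist}(Q,P)\ge \ell Q > 0$) and $\int \Delta_Q f_2\,d\mu=0$.

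I would then sort pairs $(P,Q)\in\mathcal{P}_{\textup{separated}}$ by $m\in\mathbf{N}_0$ with $\ell P=2^m\ell Q$ and $u\in\mathbf{N}_0$ with $D(Q,P)/\ell P\sim 2^u$. Lemmata \ref{l.s_exists_s} and \ref{l.s_estimate} give, for $S:=\pi_1^{u+\theta(u+m)}P$ (so $Q\subset S$),
\[
|\langle T\Delta_P f_1,\Delta_Q f_2\rangle|\lesssim \frac{2^{-\eta(u+m)/4}}{\mu(S)}\int_P|\Delta_P f_1|\,d\mu\int_Q|\Delta_Q f_2|\,d\mu.
\]
For fixed $u,m$, I would overestimate by dropping all remaining geometric constraints on $(P,Q)$ except $\pi_1^{u+\theta(u+m)}P=S$ and $Q\subset S$ at the appropriate scales. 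Denoting $t=u+\theta(u+m)$ and using that $\sum_{P\in\mathcal{D}_{1,k},\,P\subset S}\mathbf{1}_P|\Delta_P f_1|=\mathbf{1}_S|\Delta_{1,k}f_1|$ (and similarly for $Q$), the double sum collapses to
\[
\sum_{k\in\mathbf{Z}}\sum_{S\in\mathcal{D}_{1,k+t}}\frac{1}{\mu(S)}\int_S|\Delta_{1,k}f_1|\,d\mu\int_S|\Delta_{2,k-m}f_2|\,d\mu = \sum_{k\in\mathbf{Z}}\int \big(\mathbf{E}_{1,k+t}|\Delta_{1,k}f_1|\big)\big(\mathbf{E}_{1,k+t}|\Delta_{2,k-m}f_2|\big)\,d\mu.
\]

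Applying Cauchy--Schwarz in $k$, then H\"older with exponents $(p_1,p_2)$, I would reduce matters to bounding the two factors
\[
\Big\|\Big(\sum_k \big(\mathbf{E}_{1,k+t}|\Delta_{1,k}f_1|\big)^2\Big)^{1/2}\Big\|_{p_1} \quad\text{and}\quad \Big\|\Big(\sum_k \big(\mathbf{E}_{1,k+t}|\Delta_{2,k-m}f_2|\big)^2\Big)^{1/2}\Big\|_{p_2}.
\]
Stein's inequality \eqref{stein} applied to the grid $\mathcal{D}_1$ controls each by the square function of the raw sequence, and then \eqref{kahane} bounds that by $\lVert f_j\rVert_{p_j}\lesssim 1$. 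Putting everything together yields a contribution $\lesssim 2^{-\eta(u+m)/4}$ for each $(u,m)$, and summing the geometric series over $u,m\ge 0$ gives the proposition.

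The only genuinely delicate point is the second Stein application, where the averaging operator $\mathbf{E}_{1,k+t}$ is taken in the grid $\mathcal{D}_1$ while the underlying functions $|\Delta_{2,k-m}f_2|$ come from the grid $\mathcal{D}_2$; this is allowed because Stein's inequality accepts an arbitrary $L^{p_2}$-sequence in the chosen (here, $\mathcal{D}_1$) grid, and the reindexing $k\mapsto k-m$ is harmless since \eqref{kahane} is translation invariant in $k$. The rest is an orderly book-keeping of the scale and distance parameters, for which the strong definition of goodness guarantees that the cube $S$ furnished by Lemma \ref{l.s_exists_s} indeed contains $Q$.
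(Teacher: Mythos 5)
Your proposal is correct and follows essentially the same route as the paper: decompose by the scale-separation parameters $u,m$, use the mean-zero property of $\Delta_Q f_2$ together with Lemmas \ref{l.s_exists_s} and \ref{l.s_estimate} to gain the factor $2^{-\eta(u+m)/4}/\mu(S)$ with $S=\pi_1^{u+\theta(u+m)}P$, and close with Stein's inequality \eqref{stein} and the square function bound \eqref{kahane}. The only (harmless) organizational difference is that the paper keeps $\Delta_Q f_2$ intact and pairs against its square function, applying Stein only on the $f_1$ side via the kernel $K_S$, whereas you take absolute values and apply Stein a second time, in the grid $\mathcal{D}_1$, to the $\mathcal{D}_2$-sequence $|\Delta_{2,k-m}f_2|$ --- which is legitimate since \eqref{stein} accepts arbitrary sequences.
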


For the proof, we need
preparations.
Recall that $D(Q,P)=\ell Q + \mathrm{dist}(Q,P)+\ell P$, and
 write
$D(Q,P)/\ell P \sim 2^u$ if
$2^{u} < {D(Q,P)}/\ell P \le 2^{u+1}$. 
The separated term is a sum over $u,m\in \mathrm{N}_0$ 
and $j\in \{1,2,\ldots, 2^n\}$ of terms
\[
\mathbf{B}^{u,m,j}(f_1,f_2) =
\sum_{k\in\mathbf{Z}} \sum_{Q\in\mathcal{G}_{2,k-m}} 
\sum_{ \substack{P\in\mathcal{G}_{1,k} \\ D(Q,P)/\ell P \sim 2^u}}
\mathbf{1}_{\ell Q \le \mathrm{dist}(Q,P)} \langle \Delta_P f_1\rangle_{P_j}
\langle T \mathbf{1}_{P_j}, \Delta_Q f_2\rangle\,.
\]
For $Q$ and $P_j$ as in the summation above, let us write
$T_{P_j,Q} = 
\mathbf{1}_{\ell Q \le \mathrm{dist}(Q,P)} \{T\mathbf{1}_{P_j}
- T\mathbf{1}_{P_j}(x_Q)\}$.
Since $\Delta_Q f_2$ has mean zero, we can write 
$\lvert \mathbf{B}^{u,m,j}(f_1,f_2)\rvert$ as
\begin{align*}
&\bigg\lvert \int_{\mathbf{R}^n}\sum_{k\in\mathbf{Z}} \sum_{Q\in\mathcal{G}_{2,k-m}} 
\Delta_Q f_2(x)\cdot \mathbf{1}_Q(x)\sum_{ \substack{P\in\mathcal{G}_{1,k} \\ D(Q,P)/\ell P \sim 2^u}}
\langle \Delta_P f_1\rangle_{P_j} T_{P_j,Q}(x)\,d\mu(x)\bigg\rvert\\
&\le
 \mathbf{A}_{u,m,j}\cdot \bigg\lVert \bigg( \sum_{k\in\mathbf{Z}} \sum_{Q\in\mathcal{G}_{2,k-m}} 
 \lvert \Delta_Q f_2\rvert^2 \bigg)^{1/2}\bigg\rVert_{p_2}
 \lesssim  \mathbf{A}_{u,m,j}\,,
\end{align*}
where we have denoted
\begin{align*}
&\mathbf{A}_{u,m,j} = \bigg\lVert
\bigg(\sum_{k\in\mathbf{Z}} \sum_{Q\in\mathcal{G}_{2,k-m}} 
\bigg\lvert \mathbf{1}_Q \sum_{ \substack{P\in\mathcal{G}_{1,k} \\ D(Q,P)/\ell P \sim 2^u}}
\langle \Delta_P f_1\rangle_{P_j} T_{P_j,Q}\bigg\rvert^2\bigg)^{1/2}\bigg\rVert_{p_1}\,.
\end{align*}
In order to finish the proof of Proposition \ref{p.separated},
we invoke the following lemma.

\begin{lemma}\label{l.useful}
For $u,m\in\mathbf{N}_0$ and $j\in \{1,2,\ldots, 2^n\}$, we have
$\mathbf{A}_{u,m,j} \lesssim 2^{-\eta (m+u)/4}$.
\end{lemma}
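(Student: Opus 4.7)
The plan is to combine the off-diagonal kernel bound of Lemma \ref{l.s_estimate} with Stein's inequality \eqref{stein} and the martingale square function inequality \eqref{kahane}. The crux is to rewrite the product $\lvert\langle\Delta_P f_1\rangle_{P_j}\rvert\cdot\mu(P_j)/\mu(S_P)\,\mathbf 1_{S_P}$ as a dyadic conditional expectation at scale $\ell(k):=k+u+\theta(u+m)$, which then lets Stein's inequality absorb the sum over $P$ at no cost.

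First I would apply Lemma \ref{l.s_estimate} directly to each pair $(P,Q)$ appearing in the sum; its hypothesis $\ell Q\le\mathrm{dist}(Q,P)$ is exactly the indicator built into $T_{P_j,Q}$. Since $P_j\subset P$, integration of the kernel difference over $P_j$ gives, for $x\in Q$,
\begin{equation*}
\lvert T_{P_j,Q}(x)\rvert
\le\int_{P_j}\lvert K(x,y)-K(x_Q,y)\rvert\,d\mu(y)
\lesssim 2^{-\eta(u+m)/4}\,\frac{\mu(P_j)}{\mu(S_P)},
\end{equation*}
where $S_P:=\pi^{u+\theta(u+m)}_1 P$; Lemma \ref{l.s_exists_s} moreover ensures $Q\subset S_P$. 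Setting $G_Q:=\sum_P\langle\Delta_P f_1\rangle_{P_j}T_{P_j,Q}$ and exploiting the disjointness of $\mathcal G_{2,k-m}$, the level-$k$ square function collapses to an $\ell^1$-sum,
\begin{equation*}
\Big(\sum_{Q\in\mathcal G_{2,k-m}}\lvert\mathbf 1_Q G_Q\rvert^{2}\Big)^{1/2}
=\sum_{Q\in\mathcal G_{2,k-m}}\mathbf 1_Q\lvert G_Q\rvert .
\end{equation*}

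Next I would use the pointwise identity $\mathbf 1_{P_j}\Delta_P f_1=\langle\Delta_P f_1\rangle_{P_j}\mathbf 1_{P_j}$ to recognise
\begin{equation*}
\lvert\langle\Delta_P f_1\rangle_{P_j}\rvert\,\frac{\mu(P_j)}{\mu(S_P)}\mathbf 1_{S_P}
=\mathbf E_{S_P}\bigl(\mathbf 1_{P_j}\lvert\Delta_P f_1\rvert\bigr)
\le\mathbf E_{1,\ell(k)}\bigl(\mathbf 1_{P_j}\lvert\Delta_P f_1\rvert\bigr),
\end{equation*}
the last inequality because $S_P\in\mathcal D_{1,\ell(k)}$ and the integrand is non-negative. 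Since for fixed $j$ the children $P_j$ are pairwise disjoint as $P$ ranges over $\mathcal G_{1,k}$, one gets $\sum_{P\in\mathcal G_{1,k}}\mathbf 1_{P_j}\lvert\Delta_P f_1\rvert\le\lvert\Delta_{1,k}f_1\rvert$ pointwise, and hence
\begin{equation*}
\sum_{Q\in\mathcal G_{2,k-m}}\mathbf 1_Q\lvert G_Q\rvert
\lesssim 2^{-\eta(u+m)/4}\,\mathbf E_{1,\ell(k)}\lvert\Delta_{1,k}f_1\rvert .
\end{equation*}

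Finally, after the trivial index shift $k\mapsto k+u+\theta(u+m)$, Stein's inequality \eqref{stein} followed by \eqref{kahane} yields
\begin{equation*}
\mathbf A_{u,m,j}
\lesssim 2^{-\eta(u+m)/4}\Big\lVert\Big(\sum_k(\mathbf E_{1,\ell(k)}\lvert\Delta_{1,k}f_1\rvert)^{2}\Big)^{1/2}\Big\rVert_{p_1}
\lesssim 2^{-\eta(u+m)/4}\lVert f_1\rVert_{p_1}\lesssim 2^{-\eta(u+m)/4},
\end{equation*}
using $\lVert f_1\rVert_{p_1}\lesssim\lVert\widetilde f_1\rVert_{p_1}=1$ via the martingale transform \eqref{e.classical_martingale}. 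The main technical obstacle is the middle step: spotting that $\lvert\langle\Delta_P f_1\rangle_{P_j}\rvert\cdot\mu(P_j)/\mu(S_P)$ is exactly the coarse expectation $\mathbf E_{S_P}$ of $\mathbf 1_{P_j}\lvert\Delta_P f_1\rvert$. Without this identification, one is left with an awkward $P$-dependent weight $\mu(S_P)^{-1}$ that does not assemble into a single averaging operator, and, as noted earlier in the paper, Fefferman--Stein-type vector-valued maximal bounds are unavailable in the present non-doubling setting.
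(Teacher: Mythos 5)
Your proof is correct and follows essentially the same route as the paper: both rest on Lemma \ref{l.s_exists_s} and Lemma \ref{l.s_estimate} to produce the factor $2^{-\eta(u+m)/4}\mu(P_j)/\mu(S)$ with $S=\pi_1^{u+\theta(u+m)}P$, and both then recognize the resulting expression as $\mathbf{E}_{1,k+u+\theta(u+m)}\lvert\Delta_{1,k}f_1\rvert$ so that Stein's inequality \eqref{stein} and \eqref{kahane} finish. The only difference is presentational --- the paper packages the sum over $(P,Q)$ into a normalized kernel $K_S$ with $\lvert K_S(x,y)\rvert\lesssim\mathbf{1}_S(x)\mathbf{1}_S(y)$, whereas you identify the conditional expectation directly.
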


\begin{proof}
For each $S\in\mathcal{D}_{1,k+u+\theta(u+m)}$, $k\in\mathbf{Z}$,  we consider
the kernel
\begin{equation}\label{kernel_S}
K_S(x,y) =  \sum_{\substack{ P\in\mathcal{G}_{1,k} \\ P\subset S} } 
 \sum_{ \substack{Q\in\mathcal{G}_{2,k-m} \\ D(Q,P)/\ell P \sim 2^u}}
 \mathbf{1}_Q(x) \cdot \widetilde{T}_{P_j,Q}(x)\cdot \mathbf{1}_{P_j}(y)\,,
\end{equation}
where $\widetilde{T}_{P_j,Q}=\mathbf{1}_{\ell Q\le \mathrm{dist}(Q,P)}\widetilde{T}_{P_j,Q}$ is defined by
\[
\frac{T_{P_j,Q}(x)}{\mu(P_j)} = 2^{-\eta(m+u)/4} \cdot \frac{\widetilde{T}_{P_j,Q}(x)}{\mu(S)}\,.
\]
By   Lemma \ref{l.s_exists_s} and Lemma \ref{l.s_estimate}, 
\begin{equation}\label{kernel_est_sep}
\begin{split}
\lvert K_S(x,y)\rvert &\lesssim  \sum_{
\substack{P\in\mathcal{G}_{1,k} \\ P\subset S}} 
 \sum_{ \substack{Q\in\mathcal{G}_{2,k-m} \\ D(Q,P)/\ell P \sim 2^u}}
 \mathbf{1}_Q(x) \cdot \mathbf{1}_{P_j}(y)
 \le\mathbf{1}_S(x) \cdot \mathbf{1}_S(y)\,.
\end{split}
\end{equation}
We can now finish the proof as follows.  Inequality \eqref{kernel_est_sep}
allows us to write $2^{\eta p_1(m+u)/4}\mathbf{A}_{u,m,j}^{p_1}$ as
\begin{align*}
&
\int_{\mathbf{R}^n} \bigg( \sum_{k\in\mathbf{Z}} \bigg\lvert
\sum_{S\in \mathcal{D}_{1,k+u+\theta(u+m)}} \frac{1}{\mu(S)} \int_{\mathbf{R}^n} 
K_S(x,y) \Delta_k f_1(y)\,d\mu(y)\bigg\rvert^2 \bigg)^{p_1/2}\,d\mu(x)\\
&\lesssim \int_{\mathbf{R}^n} \bigg( \sum_{k\in\mathbf{Z}} \bigg\lvert
\underbrace{\sum_{S\in \mathcal{D}_{1,k+u+\theta(u+m)}} \langle \lvert \Delta_k f_1\rvert \rangle_S \mathbf{1}_S(x)}_{=\mathbf{E}_{1,k+u+\theta(u+m)} \lvert \Delta_k f_1\rvert(x)}\bigg\rvert^2 \bigg)^{p_1/2}\,d\mu(x)\,.
\end{align*}
Appealing to inequalities \eqref{stein} and \eqref{kahane} shows
that $\mathbf{A}_{u,m,j}\lesssim 2^{-\eta(m+u)/4}$.
\end{proof}

\section{Preparations for the Nearby Term}\label{s.nearby1}

The surgery argument for the nearby term follows
\cite{1201.0648} but there are also essential differences.
Let us abbreviate  $(P,Q)\in\mathcal{P}_{\textup{nearby}}$ as $P\sim Q$.
Hence, the conditions for $P\sim Q$
are
\begin{equation}\label{rems}
(P,Q)\in\mathcal{G}_1\times\mathcal{G}_2\,,\qquad 2^{-r}\ell P\le \ell Q\le \ell P\,,\quad \mathrm{dist}(Q,P)<\ell Q=\ell Q\wedge \ell P.
\end{equation} 
In particular $\ell Q\le \ell P\le D(Q,P)\le (2^r+2)\ell Q$, i.e., these quantities are comparable if $P\sim Q$.
During the course of the remaining sections, we will prove the following 
proposition.
\begin{proposition}
For a fixed $t>p_1\vee p_2$, we have
\begin{align*}
\mathbf{E}_{\omega_1}\mathbf{E}_{\omega_2}\lvert \mathbf{B}_{\textup{nearby}}(f_1,f_2)\rvert 
&\le
C(r,\upsilon,\epsilon)(1+\mathbf{T}_{\textup{loc}})
+\big(C(r,\upsilon)\epsilon^{1/t} + C(r)\upsilon^{1/t}\big)\mathbf{T}\,.
\end{align*}
Aside from the indicated absorption parameters, the constants on the right hand
side can depend upon the parameters $n,p_1,p_2,\eta,\mu$.
\end{proposition}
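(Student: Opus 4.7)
The plan is to partition $\mathcal{P}_{\textup{nearby}}$ into $O(r)$ subcollections indexed by the scale ratio $m=\log_2(\ell P/\ell Q)\in\{0,1,\ldots,r\}$ together with one of the boundedly many relative positions that $Q$ can occupy with respect to $P$; since $\mathrm{dist}(Q,P)<\ell Q\le \ell P$, these positions form a family of cardinality depending only on $n$ and $r$. Within each such subcollection the map $P\mapsto Q$ is one-to-one, which will later permit application of the square function inequality \eqref{kahane}. For a fixed subcollection I would first expand
\[
\langle T\Delta_P f_1,\Delta_Q f_2\rangle = \sum_{j,j'=1}^{2^n}\langle \Delta_P f_1\rangle_{P_j}\langle \Delta_Q f_2\rangle_{Q_{j'}}\langle T\mathbf{1}_{P_j},\mathbf{1}_{Q_{j'}}\rangle
\]
and treat the $2^{2n}$ resulting bilinear forms one by one.

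Next I would introduce the third independent random dyadic grid $\mathcal{D}_3=\mathcal{D}(\omega_3)$ anticipated in the introduction. For each admissible pair $(P,Q)$, let $R=R_{\omega_3}(P,Q)$ be the minimal $\mathcal{D}_3$-cube containing $P\cup Q$; almost surely $\ell R\le 2^{r+2}\ell P$. Declare the pair $\omega_3$-\emph{good} if both $P$ and $Q$ lie at distance at least $\epsilon\ell R$ from $\partial R$, and $\omega_3$-\emph{bad} otherwise. An argument parallel to the goodness computation in Section 2.3 gives $\mathbf{P}_{\omega_3}(\text{$\omega_3$-bad})\lesssim \upsilon$ when $\epsilon$ is chosen appropriately in terms of $\upsilon$. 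The contribution of $\omega_3$-bad pairs is then estimated crudely by H\"older's inequality, the a priori $L^{p_1}$-boundedness of $T$, and a variant of Proposition \ref{p.bad}, which produces the $C(r)\upsilon^{1/t}\mathbf{T}$ term.

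For the $\omega_3$-good pairs I would carry out the surgery: split $\langle T\mathbf{1}_{P_j},\mathbf{1}_{Q_{j'}}\rangle$ into a dominant piece supported in the interior of $R$ and controlled by the local testing hypothesis \eqref{l1_testing} together with the off-diagonal estimate of Lemma \ref{l.off_diagonal}, plus a remainder concentrated in the $\epsilon\ell R$-neighbourhood of $\partial R$. Summing the dominant piece over $P$ via the quasi-orthogonality of Section \ref{s.corona} and the square function inequality \eqref{kahane} yields the $C(r,\upsilon,\epsilon)(1+\mathbf{T}_{\textup{loc}})$ contribution. The thin-boundary remainder is bounded trivially by $\mathbf{T}$ and acquires the smallness factor $\epsilon^{1/t}$ after a H\"older interpolation against the overall $L^{p_j}$-norms of $f_j$, whose integrability room is precisely why we require $t>p_1\vee p_2$; this accounts for the $C(r,\upsilon)\epsilon^{1/t}\mathbf{T}$ term.

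The principal obstacle is the non-doubling nature of $\mu$: the $\mu$-measure of an $\epsilon\ell R$-tube around $\partial R$ need not scale like $\epsilon\mu(R)$ and can in the worst case equal $\mu(R)$. The remedy, standard in the NTV framework \cite{1201.0648} and already visible in Proposition \ref{p.bad}, is not to estimate this boundary measure deterministically at all but rather to route the boundary contributions through $\mathbf{T}=\lVert T\rVert_{L^{p_1}\to L^{p_1}}$ and then absorb them on the right-hand side using the two small prefactors $\epsilon^{1/t}$ and $\upsilon^{1/t}$ obtained respectively from the boundary-layer surgery and from the independence of $\omega_3$. Managing the interaction between these two smallness mechanisms, and in particular choosing the threshold $\epsilon$ as a function of $\upsilon$ so that both error terms remain genuinely absorbable when combined with the later proof of Proposition \ref{p.sharp}, is the most delicate bookkeeping in the argument.
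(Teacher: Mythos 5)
Your first reductions (finitely many subseries with $Q=Q(P)$ injective, expansion over children, routing boundary contributions through $\mathbf{T}$ via randomization, and the role of $t>p_1\vee p_2$) match the paper. But the mechanism you propose for the diagonal — the heart of the nearby term — has a genuine gap. You take $R$ to be the minimal $\mathcal{D}_3$-cube containing $P\cup Q$ and claim the ``dominant piece supported in the interior of $R$'' is controlled by the local testing hypothesis \eqref{l1_testing} plus Lemma \ref{l.off_diagonal}. It is not: \eqref{l1_testing} only controls pairings of the form $\langle T\mathbf 1_S,\mathbf 1_S\rangle$ for a \emph{single} cube $S$, whereas here you face $\langle T\mathbf 1_{P_j},\mathbf 1_{Q_{j'}}\rangle$ for two overlapping cubes from \emph{different} grids, and removing an $\epsilon\ell R$-collar of a cube $R$ much larger than both does nothing to reconcile them on their overlap. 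The obvious fallback, H\"older on $Q_{j'}\cap P_j$, gives $\lvert\langle T\mathbf 1_{P_j},\mathbf 1_{Q_{j'}\cap P_j}\rangle\rvert\le\mathbf T_{\textup{loc}}\,\mu(P_j)^{1/p_1}\mu(Q_{j'}\cap P_j)^{1/p_2}$, which is \emph{not} $\lesssim\mathbf T_{\textup{loc}}\,\mu(Q_{j'}\cap P_j)$ in the non-doubling setting and does not sum against the square functions in \eqref{estnormad}. (Separately, your claim that $\ell R\le 2^{r+2}\ell P$ almost surely is false: under a random translation the minimal dyadic cube containing a fixed set is arbitrarily large with positive probability.)

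What the paper actually does is the opposite of your construction: $\mathcal{D}_3$ enters at a scale much \emph{finer} than $Q_i$, namely $s\sim\upsilon\,\ell Q_i$ as in \eqref{sl}. After first stripping the $\upsilon$-collars of $P_j$ and $Q_i$ themselves (the split \eqref{tenf} into $M_1,\dots,M_5$), the overlap $Q_i\cap P_j$ is tiled by $\mathcal{L}$-cubes $G$, and the two sets $\Delta_{P_j},\Delta_{Q_i}$ are adjusted so that on each tile they either coincide with $G$ or one is empty; the diagonal then collapses to $\sum_G\langle T\mathbf 1_{\widetilde G},\mathbf 1_{\widetilde G}\rangle$, to which testing applies tile by tile, yielding the crucial bound $\lvert\beta_3(P)\rvert\lesssim(1+\mathbf T_{\textup{loc}})\mu(Q_i\cap P_j)$ of Lemma \ref{talkaa} — with the \emph{intersection} measure, which is what factors as $\int\mathbf 1_{P_j}\mathbf 1_{Q_i}\,d\mu$ and sums. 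Relatedly, your $\epsilon$ and $\upsilon$ are entangled (``$\epsilon$ chosen in terms of $\upsilon$'' so that a goodness probability is $\lesssim\upsilon$), which is inconsistent with the statement: in the paper they are independent parameters, $\upsilon^{1/t}$ arising from the $\mathbf E_{\omega_1}\mathbf E_{\omega_2}$-average over the $\upsilon$-collars of $P_j,Q_i$, and $\epsilon^{1/t}$ from the $\mathbf E_{\omega_3}$-average over the $\epsilon$-collars of the fine tiles $G$, both via the contraction principle of Proposition \ref{improved} rather than via a good/bad dichotomy for pairs.
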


\begin{remark}
We shall
track dependence of various inequalities
on absorption parameters: $r,\upsilon,\epsilon$. 
There is no need to do this quantitatively,
and thus we agree upon the following convenient notation: 
$C(r)$, $C(r,\upsilon)$, and $C(r,\upsilon,\epsilon)$ denote positive
numbers that
are allowed to depend on the indicated
absorption parameters, but also on
parameters $n,p_1,p_2,\eta,\mu$. Moreover, the 
value of these numbers is allowed to vary from one occurrence to another.
\end{remark}

For a given $P\in\mathcal{G}_1$ there are at most $C(r)$
cubes $Q\in\mathcal{G}_2$ satisfying \eqref{rems}. Hence,
without loss of generality,
it suffices consider a finite number of subseries of the general form
\begin{equation}\label{subser}
\mathbf{E}_{\omega_1}\mathbf{E}_{\omega_2}\bigg|\sum_{P\in\mathcal{G}_1}  \langle 
T\Delta_P f_1, \Delta_Q f_2\rangle\bigg|\,,
\end{equation}
where $Q=Q(P)\in\mathcal{G}_2\cup\{\emptyset\}$ inside the summation satisfies $P\sim Q$ or $Q=\emptyset$.\footnote{We agree that $\Delta_\emptyset f_2=0$.}
At the same time, we can also assume
that for any $Q\in\mathcal{G}_2$ there is at most
one $P\in\mathcal{G}_1$ such that $Q=P(Q)$.
We fix one series like this, and  the convention that $Q$ is implicitly a function
of $P$ will be maintained. 

\subsection{First reductions}
We immediately find that
\eqref{subser}
is dominated by
\begin{equation}\label{eeqa}
\sum_{i,j=1}^{2^n} \mathbf{E}_{\omega_1}\mathbf{E}_{\omega_2}\bigg|\sum_{P\in\mathcal{G}_1} 
\langle \Delta_P {f}_1\rangle_{P_j}
\langle T\mathbf{1}_{P_j}, \mathbf{1}_{Q_i}\rangle\langle \Delta_Q {f}_2 \rangle_{Q_i}\bigg|\,.
\end{equation}
Fix $i,j\in \{1,\ldots,n\}$.
For a cube $R$ in $\mathbf{R}^n$, define an `$\upsilon$-boundary region':
$\delta_R^\upsilon= (1+\upsilon)R\setminus (1-\upsilon)R$.
If $P\in\mathcal{D}_1$ and $Q=Q(P)\not=\emptyset$, we write
\begin{equation}\label{e.basic_one}
\begin{split}
&Q_{i,\partial}=Q_i\cap \delta^\upsilon_{P_j};\quad Q_{i,\mathrm{sep}} =(Q_i\setminus Q_{i,\partial})\setminus (Q_i\cap P_j);\quad \Delta_{Q_i}=(Q_i\cap P_j)\setminus (Q_{i,\partial});\\
&P_{j,\partial}=P_j\cap \delta^\upsilon_{Q_i};\quad P_{j,\mathrm{sep}} =(P_j\setminus P_{j,\partial})\setminus (Q_i\cap P_j);\quad \Delta_{P_j}=(Q_i\cap P_j)\setminus P_{j,\partial}.
\end{split}
\end{equation}
For an illustration of these sets, we refer to Figure 1.
\begin{figure}[!htb]
\begin{center}
\includegraphics[scale=0.4,viewport=20 120 580 720,clip,angle=0]{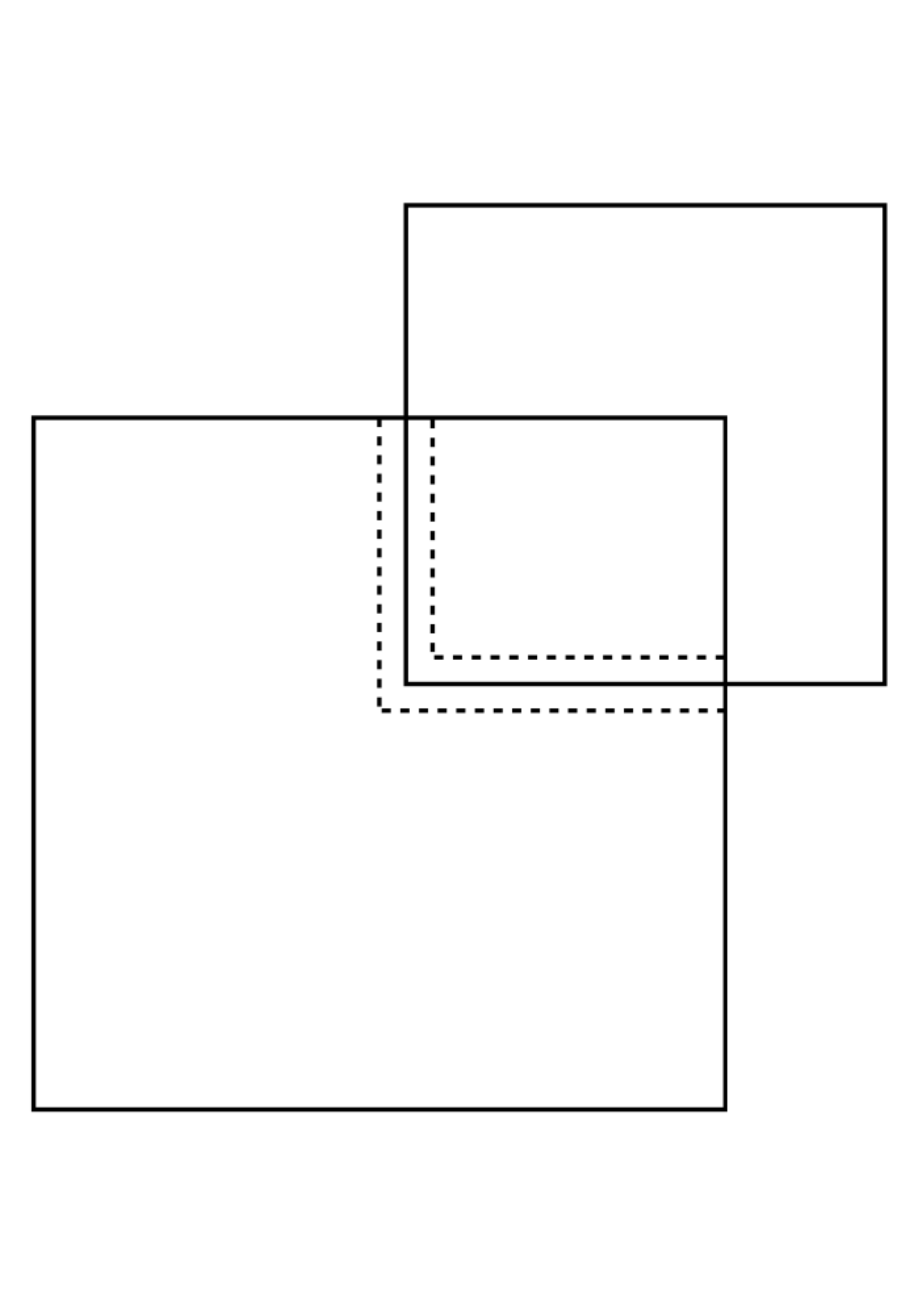}
\end{center}
\caption{
The larger cube is $P_j$, and the smaller cube is $Q_i$.
The dashed line segments separate sets $P_{j,sep}$, $P_{j,\partial}$,
and $\Delta_{P_j}$ from each other. } 
\end{figure}

We
write the matrix coefficient $\langle T\mathbf{1}_{P_j},\mathbf{1}_{Q_i}\rangle$ in \eqref{eeqa}  as
\begin{equation}\label{tenf}
\begin{split}
& \langle T\mathbf{1}_{P_{j,\mathrm{sep}}}
,\mathbf{1}_{Q_i}\rangle + \langle T\mathbf{1}_{P_{j,\partial}}, \mathbf{1}_{Q_i}\rangle
+\langle T\mathbf{1}_{\Delta_{P_j}}, \mathbf{1}_{\Delta_{Q_i}}\rangle
 +\langle T\mathbf{1}_{\Delta_{P_j}}, \mathbf{1}_{Q_{i,\partial}}\rangle + \langle
T\mathbf{1}_{\Delta_{P_j}} ,\mathbf{1}_{Q_{i,\mathrm{sep}}}\rangle\,,
\end{split}
\end{equation}
and these are denoted by
$M_1(P)+M_2(P)+M_3(P)+M_4(P)+M_5(P)$, respectively.

\subsection{Description of different terms}
The
heart of the argument lies in estimating  terms
\[
M_3(P)=\langle \mathbf{1}_{\Delta_{P_j}},T\mathbf{1}_{\Delta_{Q_i}}\rangle=\alpha_1(P)+\alpha_2(P)+\alpha_3(P),
\]
where the last decomposition depends
on a third random dyadic system $\mathcal{D}_{3}$, we refer to \eqref{ekahaj}. Terms $\alpha_2(P)$
and $\alpha_3(P)$, along with $M_2(P)$ and $M_4(P)$, are 
`$\upsilon$-boundary' terms.
The `separated'  terms $M_1(P)$ and $M_5(P)$ are treated by kernel size condition.

The term $\alpha_1(P)$ will further
be expanded in \eqref{alpha1} as 
\[\alpha_1(P)=\beta_1(P)+\beta_2(P)+\beta_3(P),\] where
$\beta_1(P)$ and $\beta_2(P)$ are so called
`$\epsilon$-boundary' terms. 
The local testing conditions and kernel size estimates are exploited in 
estimating `intersecting' term $\beta_3(P)$.

\subsection{Decomposition of $M_3(P)$}
Without loss of generality, we can assume that $\Delta_{Q_i}\not=\emptyset$ and $\Delta_{P_j}\not=\emptyset$. Indeed, otherwise we already have $M_3(P)=0$.

We introduce a third random dyadic system 
$\mathcal{D}_{3}=\mathcal{D}(\omega_3)$ 
that is independent of both $\mathcal{D}_1$ and $\mathcal{D}_2$.
Fix $j(\upsilon)\in\mathrm{Z}$ such that
$\upsilon/64\le 2^{j(\upsilon)}<\upsilon/32$.
Then, for every $P\in\mathcal{G}_1$ with
$Q=Q(P)\not=\emptyset$, we define a layer
\[\mathcal{L}=\mathcal{L}(P,\upsilon):=\mathcal{D}_{3,\log_2(s)}\] 
of $\mathcal{D}_3$-cubes with side length \begin{equation}\label{sl}
s=2^{j(\upsilon)}\ell Q_i=2^{j(\upsilon)}\cdot (\ell Q_i\wedge \ell P_j)\,.\end{equation}
That is, $\mathcal{L}$ is a layer of $\mathcal{D}_{3}$ that depends on 
parameters $P$  and $\upsilon$.

\begin{figure}[!htb]
\begin{center}
\includegraphics[scale=0.35,viewport=30 100 570 750,clip,angle=90]{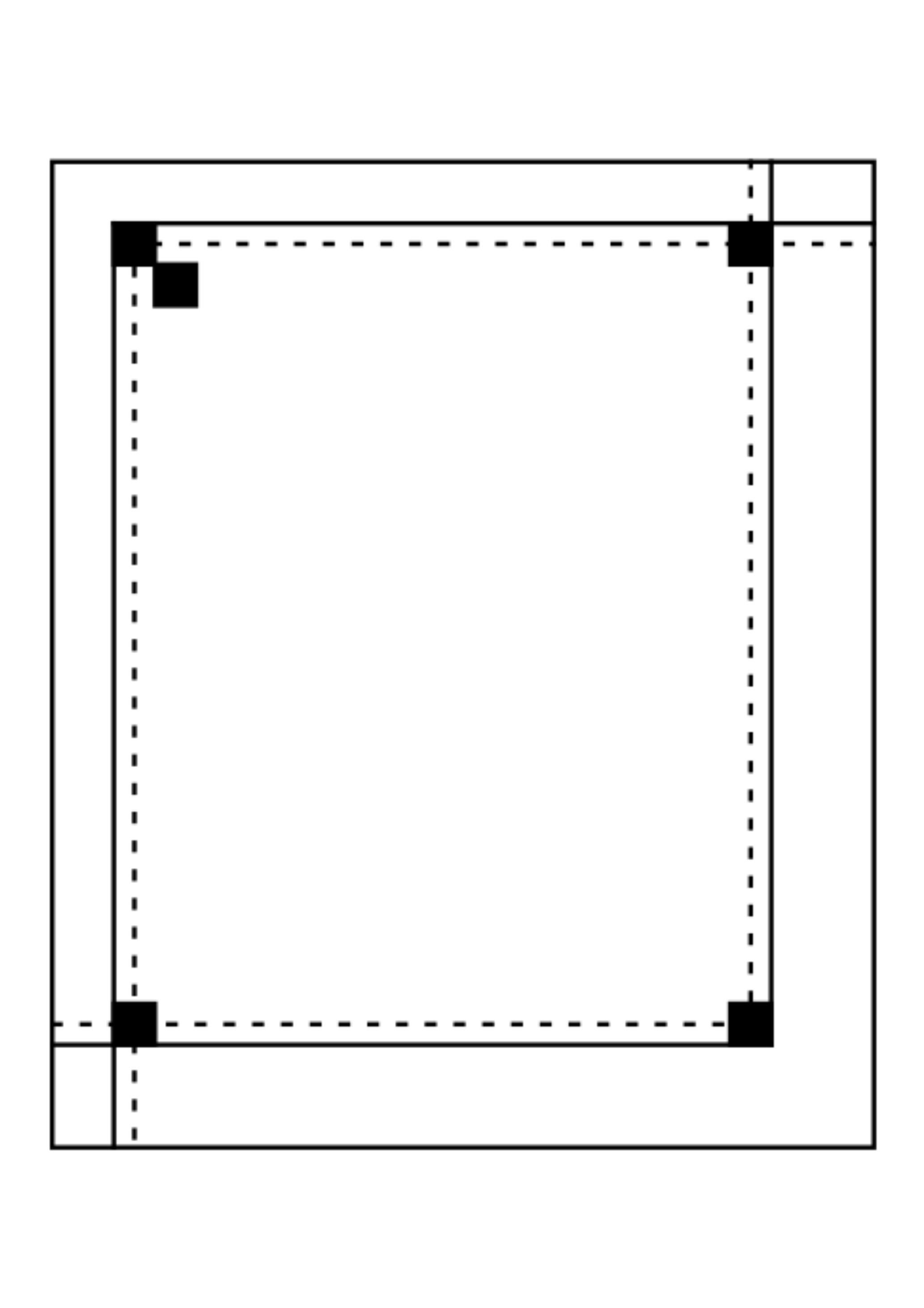}
\end{center}
\caption{
\small{Parallelogram is $Q_i\cap P_j$. 
Interiors of some black $\mathcal{L}$-cubes intersect
dashed line segments, which belong to the boundaries
of either  $\Delta_{P_j}$ or $\Delta_{Q_i}$. 
The $\mathcal{L}$-adjusted sets
$\Delta_{Q_i}^{\mathcal{L}}$ and $\Delta_{P_j}^{\mathcal{L}}$,
with solid boundaries, do
not intersect the indicated $\mathcal{L}$-cubes.}}
\end{figure}


Let $\Delta_{Q_i}^\mathcal{L},\Delta_{P_j}^\mathcal{L}\subset Q_i\cap P_j$ be
the following adaptations of $\Delta_{Q_i}$ and $\Delta_{P_j}$ to $\mathcal{L}$.
If necessary, we enlargen the latter sets
so that, for every $G\in\mathcal{L}$, either $G\cap \Delta_{Q_i}^\mathcal{L}=G\cap \Delta_{P_j}^\mathcal{L}=G$ 
or one of the two intersections
$G\cap \Delta_{Q_i}^\mathcal{L}$ and $G\cap \Delta_{P_j}^\mathcal{L}$ is empty. This is done
in such a way that we can write
\[
\Delta_{Q_i}^\mathcal{L}=\Delta_{Q_i}\cup \Delta^\partial_{Q_i}\,,\quad \Delta_{P_j}^\mathcal{L} = \Delta_{P_j}\cup \Delta^\partial_{P_j}\,,
 \]
both as disjoint unions, such that
$\Delta^\partial_{Q_i}\subset  Q_{i,\partial}\cap P_j$ and $\Delta^\partial_{P_j}\subset P_{j,\partial}\cap Q_i$. For an illustration,
we refer to Figure 2.

Now observe that $M_3(P) =\langle T\mathbf{1}_{\Delta_{P_j}},\mathbf{1}_{\Delta_{Q_i}} \rangle$ can be written as
\begin{equation}\label{ekahaj}
\begin{split}
\alpha_1(P)+\alpha_2(P)+\alpha_3(P)=
\langle T\mathbf{1}_{\Delta_{P_j}^\mathcal{L}},\mathbf{1}_{\Delta_{Q_i}^\mathcal{L}} \rangle
-\langle T\mathbf{1}_{\Delta_{P_j}^\partial}, \mathbf{1}_{\Delta_{Q_i}^\mathcal{L}} \rangle
-\langle T\mathbf{1}_{\Delta_{P_j}},\mathbf{1}_{\Delta_{Q_i}^\partial} \rangle\,.
\end{split}
\end{equation}
We remark that the terms in this decomposition depends on $\mathcal{D}_{3}$.

In order to define $\epsilon$-boundary terms, we let $P\in\mathcal{G}_1$ and write
\[
L_\epsilon=L_\epsilon(P,\upsilon) = \bigcup_{G\in \mathcal{L}(P,\upsilon)} \delta_G^\epsilon,\quad \delta_G^\epsilon=(1+\epsilon)G\setminus (1-\epsilon)G.
\]
We also write $\widetilde G=G\setminus L_\epsilon$ if $G\in\mathcal{L}=\mathcal{L}(P,\upsilon)$. Define
\[\Delta'_{Q_i}=\Delta^\mathcal{L}_{Q_i} \cap L_\epsilon\,,\quad \widetilde \Delta_{Q_i}=\Delta^\mathcal{L}_{Q_i}\setminus L_\epsilon\,,
\quad \Delta'_{P_j}=\Delta^\mathcal{L}_{P_j} \cap L_\epsilon\,,\quad \widetilde \Delta_{P_j}=\Delta^\mathcal{L}_{P_j}\setminus L_\epsilon\,.
\] 
Finally, we write $\alpha_1(P)=\langle T\mathbf{1}_{\Delta_{P_j}^\mathcal{L}} , \mathbf{1}_{\Delta_{Q_i}^\mathcal{L}} \rangle$ as
\begin{equation}\label{alpha1}
\begin{split}
\beta_1(P)+\beta_2(P)+\beta_3(P)= \langle T\mathbf{1}_{\Delta'_{P_j}},\mathbf{1}_{\Delta_{Q_i}^\mathcal{L}}\rangle+ \langle T\mathbf{1}_{\widetilde \Delta_{P_j}},\mathbf{1}_{\Delta'_{Q_i}}\rangle
+\langle T\mathbf{1}_{\widetilde \Delta_{P_j}},\mathbf{1}_{\widetilde \Delta_{Q_i}}\rangle\,.
\end{split}
\end{equation}


\section{The Nearby-Non-Boundary Term}\label{s.nearby2}

We estimate summations involving 
the 
separated terms $M_1(P)$ and $M_5(P)$, and
the intersecting term $\beta_3(P)$.
All of the estimates will be uniform over all three dyadic grids.

\subsection{Separated term}
The two indicators appearing in either $M_1(P)$ or $M_5(P)$
are associated with sets separated from each other.  
This observation will allow us to prove inequality
\begin{equation}\label{l.m1lem}
\bigg|
\sum_{\substack{P\in\mathcal{G}_1}} \langle \Delta_P f_1 \rangle_{P_j} (M_1(P)+M_5(P)) \langle 
\Delta_Q f_2 \rangle_{Q_i}
\bigg|
\le C(r,\upsilon)\,.
\end{equation}

\begin{proof}[Proof of inequality \eqref{l.m1lem}]
We focus on summation over the terms $M_1(P)$, and the treatment of summation over terms $M_5(P)$ is analogous.
We write
$T_{P_j,Q_i}=\mathbf{1}_{Q=Q(P)}\langle T\mathbf{1}_{P_{j,\mathrm{sep}}}
,\mathbf{1}_{Q_i}\rangle$.
Then, by inequalities \eqref{rems}, the term under focus can be written as
\begin{align*}
&\bigg\lvert \sum_{m=0}^r \sum_{u\in \{0,1\}}  \sum_{k\in\mathbf{Z}}
 \sum_{\substack{Q\in\mathcal{G}_{2,k-m}}}\sum_{\substack{P\in\mathcal{G}_{1,k}\\D(Q,P)/\ell P \sim 2^u }} \langle \Delta_P f_1\rangle_{P_j} T_{P_j,Q_i} \langle \Delta_Q f_2\rangle_{Q_i}\bigg\rvert\\
&\le  \sum_{m,u} \mathbf{A}_{m,u,i,j}\cdot \bigg\lVert \bigg( \sum_{k\in\mathbf{Z}} \sum_{Q\in\mathcal{G}_{2,k-m}} 
 \lvert \Delta_Q f_2\rvert^2 \bigg)^{1/2}\bigg\rVert_{p_2}
 \lesssim \sum_{m,u} \mathbf{A}_{m,u,i,j}\,,
\end{align*}
where we have denoted
\begin{align*}
&\mathbf{A}_{m,u,i,j} = \bigg\lVert
\bigg(\sum_{k\in\mathbf{Z}} \sum_{Q\in\mathcal{G}_{2,k-m}} 
\bigg\lvert \mathbf{1}_{Q_i} \sum_{ \substack{P\in\mathcal{G}_{1,k} \\ D(Q,P)/\ell P \sim 2^u}}
\langle \Delta_P f_1\rangle_{P_j} \frac{T_{P_j,Q_i}}{\mu(Q_i)}\bigg\rvert^2\bigg)^{1/2}\bigg\rVert_{p_1}\,.
\end{align*}
The proof of inequality \eqref{l.m1lem} is finished
by invoking Lemma \ref{hardy_finish} below.
\end{proof}

\begin{lemma}\label{hardy_finish}
For $m\in \{0,1,\ldots, r\}$ and $u\in \{0,1\}$, we have
$\rvert \mathbf{A}_{m,u,i,j}\lvert \le C(r,\upsilon)$.
\end{lemma}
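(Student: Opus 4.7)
The plan proceeds in three steps: a uniform kernel bound on the off-diagonal term $T_{P_j,Q_i}$, a collapse of the triangular sum via the bijection $P\leftrightarrow Q(P)$, and a reduction to $\lVert f_1\rVert_{p_1}$ via Stein and Khintchine.

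First, I would establish $|T_{P_j,Q_i}|\le C(r,\upsilon)\min(\mu(P_j),\mu(Q_i))$, which in particular yields $|T_{P_j,Q_i}|/\mu(Q_i)\le C(r,\upsilon)$. The key geometric observation is that $P_{j,\mathrm{sep}}\subset P_j\setminus(1+\upsilon)Q_i$ is separated from $Q_i$ by distance $\gtrsim\upsilon\ell Q_i$, while its diameter is $\lesssim C(r)\ell Q_i$ (from $\ell P\sim \ell Q$ under $P\sim Q$, cf.\ \eqref{rems}). Applying the size estimate \eqref{e.size} and performing a dyadic shell decomposition of $P_{j,\mathrm{sep}}$ around $x\in Q_i$, together with upper doubling of $\lambda$, yields $|T\mathbf{1}_{P_{j,\mathrm{sep}}}(x)|\le C(r,\upsilon)\mu(P_j)/\lambda(x,\ell Q_i)$. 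Since $\mu(Q_i)\lesssim \lambda(x,\ell Q_i)$ by upper doubling, integration gives the $\mu(P_j)$ bound; a dual application using the second bound in \eqref{e.size} gives the $\mu(Q_i)$ bound.

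Second, the bijectivity of $P\leftrightarrow Q(P)$ from Section \ref{s.nearby1} collapses the inner sum: for each $Q\in\mathcal{G}_{2,k-m}$ at most one $P\in\mathcal{G}_{1,k}$ satisfies $D(Q,P)/\ell P\sim 2^u$ and $Q=Q(P)$, which I denote $P_Q$ when it exists. Moreover, the cubes $Q_i$ are pairwise disjoint at fixed scale $k-m$, so for each $(x,k)$ only one $Q$ contributes. Combining this with step one,
\[
\mathbf{A}_{m,u,i,j}\le C(r,\upsilon)\bigg\lVert\bigg(\sum_{k\in\mathbf{Z}}\sum_{Q\in\mathcal{G}_{2,k-m}}\mathbf{1}_{Q_i}|\langle\Delta_{P_Q}f_1\rangle_{(P_Q)_j}|^2\bigg)^{1/2}\bigg\rVert_{p_1}.
\]

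Third, Jensen's inequality gives $|\langle\Delta_{P_Q}f_1\rangle_{(P_Q)_j}|\le\mathbf{E}_{1,k-1}|\Delta_{1,k}f_1|$ on $(P_Q)_j$. Since $P_Q$ and $Q$ are both good with $\ell P_Q\sim\ell Q\sim 2^k$, an argument along the lines of Lemma \ref{l.s_exists_s} places both cubes in a common $\mathcal{D}_1$-ancestor $\hat P$ at scale $k+r+1$. Using the tower property $\mathbf{E}_{1,k+r+1}\circ\mathbf{E}_{1,k-1}=\mathbf{E}_{1,k+r+1}$ one can transfer the pointwise bound onto $Q_i\subset\hat P$ and control it by $C(r)\mathbf{E}_{1,k+r+1}|\Delta_{1,k}f_1|(x)$. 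A double application of Stein's inequality \eqref{stein} in $k$, followed by Khintchine's inequality \eqref{kahane}, then yields
\[
\mathbf{A}_{m,u,i,j}\lesssim C(r,\upsilon)\bigg\lVert\bigg(\sum_k|\Delta_{1,k}f_1|^2\bigg)^{1/2}\bigg\rVert_{p_1}\lesssim C(r,\upsilon)\lVert f_1\rVert_{p_1}\le C(r,\upsilon).
\]

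The main obstacle is step three: transferring the martingale average from the cube $(P_Q)_j$, where $|\langle\Delta_{P_Q}f_1\rangle_{(P_Q)_j}|$ is a constant, onto the distinct cube $Q_i$ where the indicator $\mathbf{1}_{Q_i}$ lives. In the non-doubling setting any measure-ratio $\mu(\hat P)/\mu((P_Q)_j)$ that appears must be absorbed with care; the strong definition of goodness imposed in Section \ref{s.random_grid}, requiring good cubes to be good with respect to both $\mathcal{D}_1$ and $\mathcal{D}_2$ under an $r$-scale separation, is what makes this possible by providing a common ancestor at scale only $O(r)$ above $\ell P_Q$.
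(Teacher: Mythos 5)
Your steps one and two are sound: the separation of $P_{j,\mathrm{sep}}$ from $Q_i$ by $\gtrsim\upsilon\ell Q_i$ together with the size condition and upper doubling does give $\lvert T_{P_j,Q_i}\rvert\le C(r,\upsilon)\mu(Q_i)$, and the collapse of the inner sum via $P\leftrightarrow Q(P)$ and disjointness of the $Q_i$ at a fixed scale is correct. The genuine gap is exactly where you flag it, in step three, and your proposed mechanism does not close it. After discarding the kernel factor down to $\lvert T_{P_j,Q_i}\rvert/\mu(Q_i)\le C(r,\upsilon)$, you are left with the constant $\lvert\langle\Delta_{P_Q}f_1\rangle_{(P_Q)_j}\rvert$ multiplying $\mathbf{1}_{Q_i}$, and to dominate it by $\mathbf{E}_{1,k+r+1}\lvert\Delta_{1,k}f_1\rvert(x)$ for $x\in Q_i$ you would need
$\langle\lvert\Delta_{1,k}f_1\rvert\rangle_{(P_Q)_j}\le C(r)\,\langle\lvert\Delta_{1,k}f_1\rvert\rangle_{\hat P}$, i.e.\ precisely the ratio $\mu(\hat P)/\mu((P_Q)_j)\le C(r)$ that you acknowledge must be ``absorbed with care.'' Neither the tower property (which is an identity of conditional expectations, not a pointwise domination of a small-cube average by a large-cube average) nor the existence of a common ancestor at scale $O(r)$ above controls this ratio: for non-doubling $\mu$ it is unbounded even between a cube and its dyadic parent. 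So the inequality you display after ``control it by $C(r)\mathbf{E}_{1,k+r+1}\lvert\Delta_{1,k}f_1\rvert(x)$'' is false in general.

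The paper's resolution is to \emph{not} throw away the factor $\mu(P_j)$ in step one. One proves the sharper bound $\lvert T_{P_j,Q_i}\rvert\le C(r,\upsilon)\,\mu(Q_i)\mu(P_j)/\mu(S)$, where $S\in\mathcal{D}_{1,k+u+\theta(u+m)}$ is the common ancestor containing both $P$ and (by Lemma \ref{l.s_exists_s}) $Q$; this follows from $\lambda(y,\lvert x-y\rvert)^{-1}\le C(r,\upsilon)\mu(S)^{-1}$ since $\lvert x-y\rvert\ge\upsilon2^{-r}\ell P_j$ and $S\subset B(y,2^{1+u+\theta(u+m)}\ell P)$. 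Then
$\lvert\langle\Delta_Pf_1\rangle_{P_j}\rvert\,\mu(P_j)/\mu(S)\le\mu(S)^{-1}\int_{P_j}\lvert\Delta_Pf_1\rvert\,d\mu$, and summing over all $P\subset S$ at scale $k$ produces $\langle\lvert\Delta_kf_1\rvert\rangle_S=\mathbf{E}_{1,k+u+\theta(u+m)}\lvert\Delta_kf_1\rvert(x)$ with no measure ratio ever appearing; a single application of Stein's inequality \eqref{stein} and then \eqref{kahane} finishes. In other words, the factor $\mu(P_j)$ is what converts the average over the small cube $P_j$ into an integral that can be re-averaged over the large common cube $S$ — this is the content of the normalized kernel $K_S$ in the paper's proof, and it is the step your outline is missing.
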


\begin{proof}
For each $k\in \mathbf{Z}$ and $S\in\mathcal{D}_{1,k+u+\theta(u+m)}$,  define
a kernel
\begin{equation}\label{kernel_S_II}
K_S(x,y) =  \sum_{\substack{ P\in\mathcal{G}_{1,k} \\ P\subset S} } 
 \sum_{ \substack{Q\in\mathcal{G}_{2,k-m} \\ D(Q,P)/\ell P \sim 2^u}}
 \mathbf{1}_{Q_i}(x) \cdot \widetilde{T}_{P_j,Q_i}\cdot \mathbf{1}_{P_j}(y)\,,\qquad x,y\in\mathbf{R}^n\,,
\end{equation}
where $\widetilde{T}_{P_j,Q_i}=\mathbf{1}_{Q=Q(P)}\widetilde{T}_{P_j,Q_i}$ is defined by
\[
\frac{T_{P_j,Q_i}}{\mu(P_j)\mu(Q_i)} =  \frac{\widetilde{T}_{P_j,Q_i}}{\mu(S)}\,.
\]
Consider cubes $P$ and $Q$ as in the definition of $K_S$, and
let $y\in P_{j,\textup{sep}}$ and $x\in Q_i$.
By the upper doubling properties of $\mu$, and the facts that
 $\lvert x-y\rvert \ge \upsilon 2^{-r}\ell P_j$ and
$S\subset B(y, 2^{1+u+\theta(u+m)}\ell P)$, we find
that $\lambda(y, \lvert x-y\rvert)^{-1}\le C(r,\upsilon)\mu(S)^{-1}$.
Hence, by
definition,
\begin{align*}
\lvert T_{P_j,Q_i}\rvert  \le \int_{Q_i}\int_{P_{j,\textup{sep}}}
\frac{1}{\lambda(y,\lvert x-y\rvert)}\,d\mu(y)\,d\mu(x)
\le C(r,\upsilon) \mu(Q_i) \mu(P_j)\mu(S)^{-1}\,.
\end{align*}
As a consequence $\lvert \widetilde{T}_{P_j,Q_i}\rvert
\le C(r,\upsilon)$ and,
by recalling Lemma \ref{l.s_exists_s},
\begin{equation}\label{kernel_est}
\begin{split}
\lvert K_S(x,y)\rvert &\le C(r,\upsilon)\sum_{
\substack{P\in\mathcal{G}_{1,k} \\ P\subset S}} 
 \sum_{ \substack{P\in\mathcal{G}_{2,k-m} \\ D(Q,P)/\ell P \sim 2^u}}
 \mathbf{1}_{Q_i}(x) \cdot \mathbf{1}_{P_j}(y)
\le C(r,\upsilon)\cdot \mathbf{1}_S(x) \cdot \mathbf{1}_S(y)\,.
\end{split}
\end{equation}
After these preparations, we finish the proof by proceeding as in
Lemma \ref{l.useful}.
\end{proof}

\subsection{Intersecting term}

The following inequality deals with intersecting part, i.e., terms $\beta_3(P)$;
\begin{equation}\label{alpha_4}
\bigg|
\sum_{\substack{P\in\mathcal{G}_1}} \langle \Delta_P f_1\rangle_{P_j} \beta_3(P)\langle \Delta_Q f_2 \rangle_{Q_i}
\bigg|
\le C(r,\upsilon,\epsilon)(1+\mathbf{T}_{\textup{loc}})\,.
\end{equation}
The proof of this inequality relies on the kernel size
estimate and local testing conditions.

\begin{proof}[Proof of inequality \eqref{alpha_4}]
We tacitly restrict all the summations here to cubes $P\in\mathcal{G}_1$
for which $\mu(Q_i\cap P_j)\not=0$. Indeed, otherwise $\beta_3(P)=0$.
By writing $\mu(Q_i\cap P_j)=\int \mathbf{1}_{Q_i} \mathbf{1}_{P_j}\,d\mu$ and using
Cauchy-Schwarz and H\"older's inequality,
\begin{equation}\label{estnormad}
\begin{split}
&\bigg|\sum_{P\in\mathcal{G}_1} \langle \Delta_P f_1\rangle_{P_j} \beta_3(P)\langle \Delta_Q f_2\rangle_{Q_i}\bigg|\\
&\le \bigg\| \bigg(\sum_{P\in\mathcal{G}_1}  \lvert \langle \Delta_P f_1\rangle_{P_j}\mathbf{1}_{P_j} \rvert^2 \bigg)^{1/2}\bigg\|_{p_1} \cdot \bigg\| \bigg(
\sum_{P\in\mathcal{G}_1} \bigg\lvert
\frac{ \beta_3(P) }{\mu(Q_i\cap P_j)}\langle \Delta_Q f_2\rangle_{Q_i}\mathbf{1}_{Q_i}\bigg\rvert^2 \bigg)^{1/2}\bigg\|_{p_2}.
\end{split}
\end{equation}
By inequality \eqref{kahane}, the first factor is bounded by
$\lesssim 1$.
Let us then focus on the second factor;
by writing the summation in terms of $Q$ and 
using Lemma \eqref{talkaa}, we 
obtain an upper bound $C(r,\upsilon,\epsilon) (1+\mathbf{T}_{\textup{loc}})$
for the second term.
\end{proof}

\begin{lemma}\label{talkaa}
Let $P\in\mathcal{G}_1$. Then
$|\beta_3(P)|\le C(r,\upsilon,\epsilon) (1+\mathbf{T}_{\textup{loc}})\mu(Q_i\cap P_j)$.
\end{lemma}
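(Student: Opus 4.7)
The plan is to exploit the fact that the $\mathcal{L}$-adjusted sets are unions of $\mathcal{L}$-cubes, so that one can write $\widetilde\Delta_{P_j}=\bigsqcup_{G\in\mathcal{A}}\widetilde G$ and $\widetilde\Delta_{Q_i}=\bigsqcup_{G\in\mathcal{B}}\widetilde G$ for subfamilies $\mathcal{A},\mathcal{B}\subset\mathcal{L}$. A brief verification shows that, because $\mathcal{L}$ is a full layer of the dyadic grid $\mathcal{D}_3$ and thus tiles $\mathbf{R}^n$, for every $G\in\mathcal{L}$ one has $L_\epsilon\cap G=G\setminus(1-\epsilon)G$; hence $\widetilde G=(1-\epsilon)G$ is itself a cube. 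Since $s\approx\upsilon\ell Q_i$, the number of $\mathcal{L}$-cubes meeting $Q_i\cap P_j$ is at most $C(\upsilon)$.

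Next, split
\[
\beta_3(P)=\sum_{G\in\mathcal{A}}\sum_{G'\in\mathcal{B}}\langle T\mathbf{1}_{\widetilde G},\mathbf{1}_{\widetilde G'}\rangle
\]
into the diagonal part ($G=G'$, which forces $G\in\mathcal{A}\cap\mathcal{B}$) and the off-diagonal part ($G\neq G'$). For the diagonal, the key point is that $\widetilde G=(1-\epsilon)G$ is a cube, so the Local Testing Condition Hypothesis applies to it directly:
\[
\int_{(1-\epsilon)G}|T\mathbf{1}_{(1-\epsilon)G}|^{p_1}\,d\mu\le\mathbf{T}_{\textup{loc}}^{p_1}\,\mu((1-\epsilon)G).
\]
By H\"older's inequality, $|\langle T\mathbf{1}_{\widetilde G},\mathbf{1}_{\widetilde G}\rangle|\le\mathbf{T}_{\textup{loc}}\mu(G)$, and summing over the disjoint cubes $G\in\mathcal{A}\cap\mathcal{B}\subset Q_i\cap P_j$ produces the diagonal bound $\mathbf{T}_{\textup{loc}}\mu(Q_i\cap P_j)$.

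For the off-diagonal case $G\neq G'$, the shrinking by $\epsilon s/2$ from each face of $G$ gives $\mathrm{dist}(\widetilde G,\widetilde G')\ge\epsilon s$. Combining the kernel size bound \eqref{e.size} with the upper doubling inequality $\lambda(y,s)\le C_\lambda^{\log_2(1/\epsilon)}\lambda(y,\epsilon s)=\epsilon^{-d}\lambda(y,\epsilon s)$, together with the trivial estimate $\lambda(y,s)\ge\mu(B(y,s))\ge\mu(G)$ (valid for $y\in G$ since $B(y,s)\supset G$ in the sup-norm), yields $|T\mathbf{1}_{\widetilde G}(x)|\lesssim\epsilon^{-d}$ for every $x\in\widetilde G'$. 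Thus $|\langle T\mathbf{1}_{\widetilde G},\mathbf{1}_{\widetilde G'}\rangle|\lesssim\epsilon^{-d}\mu(\widetilde G')$, and summing over the at most $C(\upsilon)$ choices of $G\in\mathcal{A}$ and using the disjointness bound $\sum_{G'\in\mathcal{B}}\mu(\widetilde G')\le\mu(Q_i\cap P_j)$ gives an off-diagonal contribution of $C(\upsilon)\epsilon^{-d}\mu(Q_i\cap P_j)$.

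Combining these two estimates yields the desired inequality $|\beta_3(P)|\le C(r,\upsilon,\epsilon)(1+\mathbf{T}_{\textup{loc}})\mu(Q_i\cap P_j)$. The delicate step, and the one that makes the whole argument work cleanly, is the identification $\widetilde G=(1-\epsilon)G$: it turns a potentially awkward ``sub-cube minus its own inner boundary strip'' into an actual cube, to which the local testing hypothesis then applies without any further surgery inside $G$. The off-diagonal part is routine kernel-integral bookkeeping once this is in place.
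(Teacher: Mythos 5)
Your argument is essentially the paper's: the same diagonal/off-diagonal split of $\beta_3(P)$ over the layer $\mathcal{L}$, with the diagonal terms reduced---via the dichotomy built into the definition of $\Delta^{\mathcal{L}}_{P_j}$ and $\Delta^{\mathcal{L}}_{Q_i}$---to the local testing condition on the cube $\widetilde G=(1-\epsilon)G$, and the off-diagonal terms handled by the kernel size bound together with the $\gtrsim\epsilon s$ separation of the shrunken cubes. One small overstatement: the construction does not guarantee that $\widetilde\Delta_{P_j}$ and $\widetilde\Delta_{Q_i}$ are each disjoint unions of full cubes $\widetilde G$ (it only says that for each $G$ either both intersections equal $G$ or one of them is empty; the other may then be a proper nonempty subset of $G$), but this is harmless, since your off-diagonal estimate only needs the containments $G\cap\widetilde\Delta_{P_j}\subset\widetilde G$ and $G'\cap\widetilde\Delta_{Q_i}\subset\widetilde G'$, and on the diagonal the dichotomy yields exactly the either-$\langle T\mathbf{1}_{\widetilde G},\mathbf{1}_{\widetilde G}\rangle$-or-zero alternative that you and the paper both use.
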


\begin{proof}
We can assume that $Q=Q(P)\not=\emptyset$, hence $P\sim Q$.
Consider the expansion,
\begin{align*}
\beta_3(P)=\langle T\mathbf{1}_{\widetilde \Delta_{P_j}}, \mathbf{1}_{\widetilde \Delta_{Q_i}}\rangle= \sum_{\substack{G,H\in \mathcal{L}\\G\not=H}} \langle 
T(\mathbf{1}_G \mathbf{1}_{\widetilde \Delta_{P_j}}),
\mathbf{1}_{H}\mathbf{1}_{\widetilde \Delta_{Q_i}}\rangle
+\sum_{G\in \mathcal{L}}
\langle  T(\mathbf{1}_G \mathbf{1}_{\widetilde \Delta_{P_j}}), \mathbf{1}_{G}\mathbf{1}_{\widetilde \Delta_{Q_i}}\rangle.
\end{align*}
In both of the series above, 
the finite number of summands depends on
$n$ and $\upsilon$. Hence, 
it suffices to obtain estimates
for individual summands for fixed $G,H\in\mathcal{L}$.
%
First, if $G\not=H$, then 
\[\ell Q_i\le C(r,\upsilon,\epsilon)\,\mathrm{dist}(G\cap \widetilde \Delta_{P_j},H\cap \widetilde \Delta_{Q_i}).\]
In particular,   $\lambda(x,\lvert x-y\rvert)^{-1} \le C(r,\upsilon,\epsilon)\mu(Q_i)^{-1}$ if $x\in H\cap \widetilde \Delta_{Q_i}$
and $y\in G\cap \widetilde \Delta_{P_j}$.
Hence, 
\begin{equation}\label{kestim}
\begin{split}
|\langle T(\mathbf{1}_G \mathbf{1}_{\widetilde \Delta_{P_j}}), \mathbf{1}_{H}\mathbf{1}_{\widetilde \Delta_{Q_i}}\rangle|
&\le \int_{H\cap \widetilde \Delta_{Q_i}}\int_{G\cap \widetilde \Delta_{P_j}} 
\frac{1}{\lambda(x,\lvert x-y\rvert)}\,d\mu(y)\,d\mu(x)\\
&\le C(r,\upsilon,\epsilon)\frac{\mu(Q_i\cap P_j)\mu(Q_i\cap P_j)}{\mu(Q_i)}\le C(r,\upsilon,\epsilon)\mu(Q_i\cap P_j).
\end{split}
\end{equation}
In the last step, we also used the fact that $\widetilde\Delta_{P_j}\cup \widetilde \Delta_{Q_i}\subset Q_i\cap P_j$.

Then we consider the case of $G=H$.  
By construction,
\begin{equation}\label{dsh}
\langle T(\mathbf{1}_G \mathbf{1}_{\widetilde \Delta_{P_j}}), \mathbf{1}_{G}\mathbf{1}_{\widetilde \Delta_{Q_i}}
\rangle
=\begin{cases}
\langle T\mathbf{1}_{\widetilde G}, \mathbf{1}_{\widetilde G}\rangle,\quad &\text{if }
G=G\cap \Delta_{P_j}^\mathcal{L}=G\cap \Delta_{Q_i}^\mathcal{L}\,;\\
0&\text{otherwise}.
\end{cases}
\end{equation}
In any case, by local testing conditions
$\lvert \langle T(\mathbf{1}_G \mathbf{1}_{\widetilde \Delta_{P_j}}),\mathbf{1}_{G}\mathbf{1}_{\widetilde \Delta_{Q_i}}\rangle \rvert
\le \mathbf{T}_{\textup{loc}} \mu(\widetilde G)\le  \mathbf{T}_{\textup{loc}}\mu(Q_i \cap P_j)$.
\end{proof}

\section{The Nearby-Boundary Term}\label{s.nearby3}

Here we treat the $\epsilon$ and $\upsilon$ boundary terms
by probabilistic arguments.

\subsection{The $\epsilon$-boundary terms}
Following inequality controls summation for $\epsilon$-boundary terms.
Let $t>p_1\vee p_2$ be a positive real number. Then
\begin{equation}\label{aep}
\mathbf{E}_{\omega_{3}} \bigg|
\sum_{\substack{P\in\mathcal{G}_1}} \langle \Delta_P f_1 \rangle_{P_j} \big(\beta_1(P)+\beta_2(P)\big) \langle \Delta_Q f_2\rangle_{Q_i}
\bigg|
\le C(r,\upsilon)\epsilon^{1/t} \mathbf{T}\,.
\end{equation}
The
expectations over the dyadic system $\mathcal{D}_{3}$ are crucial here,
and here only.

We let
$\epsilon=(\varepsilon_k)_{k\in\mathbf{Z}}$ be a sequence
of Rademacher variables, supported on a probability space $(\Omega,\mathbf{P})$. 
We can also 
associate Rademacher variables to $\mathcal{D}_j$-dyadic cubes with $j\in \{1,2\}$:---
fix an injection $R\mapsto j(R):\mathcal{D}_j\to \mathbf{Z}$, and use
notation $\varepsilon_R = \varepsilon_{j(R)}$. 

We rely on  the following improvement of the contraction principle,
\cite[ Lemma 3.1]{MR2491037}.

\begin{proposition}\label{improved}
Suppose that $\{\rho_k\,:\,k\in \mathbf{Z}\}\subset L^t(\widetilde\Omega)$ for some
$\sigma$-finite measure space $(\widetilde\Omega,\widetilde{\mathbf{P}})$ and $t\in (2,\infty)$.
Then, for all complex-valued sequences $(\xi_k)_{k\in\mathbf{Z}}$,
\[
\bigg\| \sum_{k=-\infty}^\infty \varepsilon_k \rho_k\xi_k\bigg\|_{L^t(\widetilde\Omega;L^2(\Omega))}
\lesssim \sup_{k\in\mathbf{Z}} \|\rho_k\|_{L^t(\widetilde\Omega)}\cdot \bigg\|\sum_{k=-\infty}^\infty \varepsilon_k\xi_k\bigg\|_{L^2(\Omega)}\,.
\]
\end{proposition}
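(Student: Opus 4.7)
The plan is to reduce the statement to a purely deterministic inequality on $(\widetilde\Omega, \widetilde{\mathbf{P}})$ by exploiting the \emph{exact} identity $\lVert \sum_k \varepsilon_k z_k\rVert_{L^2(\Omega)}^2 = \sum_k |z_k|^2$ for Rademacher sums, and then invoke Jensen's inequality, whose use is exactly what is licensed by the hypothesis $t>2$.

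First, for each fixed $\widetilde\omega\in\widetilde\Omega$, orthonormality of the Rademacher system in $L^2(\Omega)$ gives
\[
\bigg\lVert \sum_k \varepsilon_k \rho_k(\widetilde\omega)\xi_k\bigg\rVert_{L^2(\Omega)}^2 = \sum_k \lvert \rho_k(\widetilde\omega)\rvert^2 \lvert \xi_k\rvert^2, \qquad \bigg\lVert \sum_k \varepsilon_k \xi_k\bigg\rVert_{L^2(\Omega)}^2 = \sum_k \lvert\xi_k\rvert^2.
\]
Raising to the $t$-th power and integrating against $\widetilde{\mathbf{P}}$ reduces the claimed bound to
\[
\int_{\widetilde\Omega}\bigg(\sum_k \lvert\rho_k\rvert^2 \lvert\xi_k\rvert^2\bigg)^{t/2}\, d\widetilde{\mathbf{P}} \lesssim \sup_k \lVert \rho_k\rVert_{L^t(\widetilde\Omega)}^t \cdot \bigg(\sum_k \lvert\xi_k\rvert^2\bigg)^{t/2}.
\]

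By homogeneity I may assume $\sum_k \lvert\xi_k\rvert^2 = 1$; then the numbers $a_k := \lvert\xi_k\rvert^2$ form a discrete probability distribution on $\mathbf{Z}$. Since $t/2 > 1$, the function $x\mapsto x^{t/2}$ is convex, so Jensen's inequality with respect to $(a_k)_k$ yields the pointwise (in $\widetilde\omega$) bound
\[
\bigg(\sum_k a_k \lvert\rho_k(\widetilde\omega)\rvert^2\bigg)^{t/2} \le \sum_k a_k \lvert\rho_k(\widetilde\omega)\rvert^t.
\]
Integrating over $\widetilde\Omega$ and exchanging the sum with the integral by Tonelli (legitimate since all terms are nonnegative and $\widetilde{\mathbf{P}}$ is $\sigma$-finite) gives
\[
\int_{\widetilde\Omega}\bigg(\sum_k a_k \lvert\rho_k\rvert^2\bigg)^{t/2}\, d\widetilde{\mathbf{P}} \le \sum_k a_k \lVert \rho_k\rVert_{L^t(\widetilde\Omega)}^t \le \sup_k \lVert \rho_k\rVert_{L^t(\widetilde\Omega)}^t,
\]
since $\sum_k a_k = 1$. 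Taking $t$-th roots and undoing the normalization completes the argument.

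There is essentially no ``hard part''; the only conceptual point is to recognize that Khintchine's inequality is unnecessary on the Rademacher side because the norm there is exactly $L^2$, so orthogonality is an equality rather than a two-sided estimate. With that observation, the hypothesis $t>2$ enters in the single, decisive use of Jensen's inequality---at $t=2$ the inequality already holds with equality, while for $t<2$ the direction of Jensen reverses, so the genuine improvement over the classical (deterministic-coefficient) contraction principle requires precisely $t>2$.
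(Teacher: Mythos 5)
Your proof is correct. Note that the paper does not prove this proposition at all: it is quoted from the literature (\cite[Lemma 3.1]{MR2491037}), so there is no argument to compare against line by line. Your reduction is exactly the right one for the scalar-valued formulation stated here: since the $\xi_k$ are complex numbers, orthonormality of the Rademacher system collapses both $L^2(\Omega)$ norms to exact $\ell^2$ expressions, the left-hand side becomes $\lVert (\sum_k \lvert\rho_k\rvert^2\lvert\xi_k\rvert^2)^{1/2}\rVert_{L^t(\widetilde\Omega)}$, and the single substantive step is Jensen's inequality with weights $a_k=\lvert\xi_k\rvert^2/\sum_m\lvert\xi_m\rvert^2$ applied to the convex function $x\mapsto x^{t/2}$, which is precisely where $t>2$ enters. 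The Tonelli interchange is justified by nonnegativity, and your argument in fact gives the inequality with implied constant $1$. The only thing worth flagging is that the cited lemma is typically stated for Banach-space-valued $\xi_k$, where one cannot reduce to a square function and the proof genuinely requires randomization/hypercontractivity-type tools; your argument buys simplicity by exploiting that only the scalar case is needed in this paper, which is all the application in Section \ref{s.nearby3} requires. (A small side remark: at $t=2$ it is the Jensen step, not the final inequality, that becomes an identity; the conclusion there still reads as an inequality via $\sum_k a_k\lVert\rho_k\rVert_2^2\le\sup_k\lVert\rho_k\rVert_2^2$.)
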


\begin{proof}[Proof of inequality \eqref{aep}]
Let us focus on the sum involving the terms $\beta_1(P)$; 
the estimate for the sum involving terms $\beta_2(P)$ is similar.
We randomize and use H\"older's inequality,
\begin{equation}\label{ranes}
\begin{split}
& \bigg|
\sum_{\substack{P\in\mathcal{G}_1}} \langle \Delta_P f_1\rangle_{P_j} \langle T\mathbf{1}_{\Delta'_{P_j}}
,\mathbf{1}_{\Delta_{Q_i}^\mathcal{L}}\rangle\langle \Delta_Q f_2\rangle_{Q_i}\bigg|\\
&=\bigg|\int_\Omega \Big\langle T\Big(\sum_{S\in\mathcal{G}_1} \varepsilon_S
\mathbf{1}_{\Delta'_{S_j}} \langle\Delta_S f_1\rangle_{S_j}\Big),\sum_{P\in\mathcal{G}_1}\varepsilon_P \mathbf{1}_{\Delta_{Q_i}^\mathcal{L}}\langle \Delta_Q f_2\rangle_{Q_i} 
 \Big\rangle d\mathbf{P}(\epsilon)\bigg|\\
&\le \bigg\lVert T\Big(\sum_{S\in\mathcal{G}_1} \varepsilon_S\mathbf{1}_{\Delta'_{S_j}}\langle\Delta_S f_1\rangle_{S_j}\Big)\bigg\|_{L^{p_1}(\Omega\times\mathbf{R}^n)} \bigg\|\sum_{P\in\mathcal{G}_1}\varepsilon_P\mathbf{1}_{\Delta_{Q_i}^\mathcal{L}}\langle \Delta_Q f_2\rangle_{Q_i}\bigg\rVert_{L^{p_2}(\Omega\times\mathbf{R}^n)}.
\end{split}
\end{equation}
Index the very last  summation in terms of $\mathcal{D}_2$. 
This can be done by using our standing assumptions of $P\mapsto Q(P)=Q$. Then, by
the contraction principle and inequality $|\mathbf{1}_{\Delta_{Q_i}^\mathcal{L}}| \le \mathbf{1}_{Q_i}$, we see that
the second factor in the last line of \eqref{ranes} is bounded (up to a constant multiple) by $\lVert f_2\rVert_{p_2}\lesssim 1$.

In order to estimate the first factor in the last line of \eqref{ranes} we first
extract operator norm $\mathbf{T}$. Then we fix
$S\in\mathcal{G}_{1,k}$ with $k\in\mathrm{Z}$.
By
\eqref{rems} and \eqref{sl},
\[
\Delta_{S_j}'\subset L_\epsilon(S,\upsilon)=\bigcup_{G\in \mathcal{L}(S,\upsilon)} \delta_G^\epsilon\subset 
\bigcup_{m=j(\upsilon)+k-r-1}^{j(\upsilon)+k-1} \bigcup_{G\in\mathcal{D}_{3,m}}  \delta^\varepsilon_G=:
\delta^\epsilon(k).
\] 
Hence,  we have $\mathbf{1}_{\Delta'_{S_j}}\le \mathbf{1}_{\delta^\epsilon(k)}\mathbf{1}_{S_j}$.
By the contraction principle and assumption $t\ge p_1$,
\begin{equation}\label{e.crc}
\begin{split}
&\mathbf{E}_{\omega_{3}}\bigg\| \sum_{S\in\mathcal{G}_1} \varepsilon_S\mathbf{1}_{\Delta'_{S_j}}\langle\Delta_S f_1\rangle_{S_j}\bigg\|_{L^{p_1}(\Omega\times\mathbf{R}^n)}
\lesssim
\mathbf{E}_{\omega_{3}}\bigg\|\sum_{k\in\mathrm{Z}} \varepsilon_k\mathbf{1}_{\delta^\epsilon(k)}\sum_{S\in\mathcal{D}_{1,k}} \mathbf{1}_{S_j}\langle \Delta_S f_1\rangle_{S_j}\bigg\|_{L^{p_1}(\Omega\times\mathbf{R}^n)}\\
&\quad\le \bigg(\int_{\mathbf{R}^n} \bigg[\mathbf{E}_{\omega_{3}}\bigg\|\sum_{k\in\mathrm{Z}} \varepsilon_k\mathbf{1}_{\delta^\epsilon(k)}(x)\sum_{S\in\mathcal{D}_{1,k}} \mathbf{1}_{S_j}(x)\langle \Delta_S f_1\rangle_{S_j}\bigg\|^t_{L^{p_1}(\mathbf{\Omega})}\bigg]^{p_1/t}
d\mu(x)\bigg)^{1/p_1}.
\end{split}
\end{equation}
For a fixed $x\in\mathbf{R}^n$, the last integrand evaluated at $x$ is of
the form as in Proposition \ref{improved} with
$\xi_k = \sum_{S\in\mathcal{D}_{1,k}} \mathbf{1}_{S_j}(x) \langle \Delta_S f_1\rangle_{S_j}$.
Moreover, the random variables $\rho_k:=\mathbf{1}_{\delta^\epsilon(k)}(x)$ as functions of
$\omega_3 \in \Omega_3=(\{0,1\}^n)^{\mathbf{Z}}$ belong to $L^t(\Omega_3)$,
and they satisfy
\[
\sup_{k\in\mathrm{Z}}\|\mathbf{1}_{\delta^\epsilon(k)}(x)\|_{L^t(\Omega_3)} = \sup_{k\in\mathrm{Z}}\mathbf{P}_{\omega_3} (\mathbf{1}_{\delta^\epsilon(k)}(x)=1)^{1/t}\le C(r,\upsilon)\epsilon^{1/t}.
\]
Hence, by Proposition \ref{improved} and Khintchine's inequality,
\begin{align*}
LHS\eqref{e.crc}\le 
C(r,\upsilon)
\epsilon^{1/t}\bigg\| \sum_{S\in\mathcal{D}_1} \varepsilon_S \mathbf{1}_{S_j}\langle \Delta_S f_1\rangle_{S_j} \bigg\|_{L^{p_1}(\Omega\times\mathbf{R}^n)}\le
C(r,\upsilon)\epsilon^{1/t}\,.
\end{align*}
The proof is complete.
\end{proof}

\subsection{The $\upsilon$-boundary terms}
The following inequality controls summation of the $\upsilon$-boundary terms.
Let $t>p_1\vee p_2$. Then
\begin{equation}\label{etalem}
\mathbf{E}_{\omega_1}\mathbf{E}_{\omega_2}\bigg|
\sum_{\substack{P\in\mathcal{G}_1}} \langle \Delta_P f_1\rangle_{P_j} (M_2(P)+M_4(P)+\alpha_2(P)+\alpha_3(P)) \langle \Delta_Q f_2\rangle_{Q_i}
\bigg|
\le C(r)\upsilon^{1/t} \mathbf{T}\,.
\end{equation}
Before the proof, let us remark that although
both $\alpha_2(P)$ and $\alpha_3(P)$ depend 
on the random dyadic system $\mathcal{D}_{3}$, the inequality
is uniform over all such systems.

\begin{proof}[Proof of inequality \eqref{etalem}]
First we observe that functions $f_j$  depend on
{\em both} dyadic systems, as they are (essentially) projections
to good cubes. This dependency is not allowed in the 
argument below. Fortunately, this issue can be easily addressed---if $Q(P)\not=\emptyset$ in the series above, we have both
$P\in\mathcal{G}_1$ and $Q=Q(P)\in\mathcal{G}_2$. Then, in particular
$\Delta_P f_1 = \Delta_P \widetilde{f_1}$
and $\Delta_Q f_2 = \Delta_Q \widetilde f_2$. Functions
$\widetilde f_j$ do not depend on the dyadic systems, and we use them to replace
$f_j$'s.

By \eqref{tenf} and \eqref{ekahaj}, $M_2(P)+\alpha_2(P)$ and $M_4(P)+\alpha_3(P)$
are given by
\begin{align*}
\langle T\mathbf{1}_{P_{j,\partial}}, \mathbf{1}_{Q_i}\rangle
-\langle T\mathbf{1}_{\Delta_{P_j}^\partial}, \mathbf{1}_{\Delta_{Q_i}^\mathcal{L}}\rangle;
\quad \langle T\mathbf{1}_{\Delta_{P_j}}, \mathbf{1}_{Q_{i,\partial}}\rangle-\langle T\mathbf{1}_{\Delta_{P_j}}, \mathbf{1}_{\Delta_{Q_i}^\partial}\rangle\,,
\end{align*}
respectively.
Observe that
\begin{equation}\label{ekat}
\begin{split}
&(\mathbf{1}_{P_{j,\partial}}+\mathbf{1}_{\Delta_{P_j}^\partial}) \lesssim \mathbf{1}_{P_{j,\partial}},\quad (\mathbf{1}_{Q_i}+\mathbf{1}_{\Delta_{Q_i}^\mathcal{L}})\lesssim \mathbf{1}_{Q_i}\,,\quad 
\mathbf{1}_{\Delta_{P_j}}\lesssim \mathbf{1}_{P_j},\,\,\,\quad (\mathbf{1}_{Q_{i,\partial}}+\mathbf{1}_{\Delta_{Q_i}^\partial})\lesssim \mathbf{1}_{Q_{i,\partial}}.
\end{split}
\end{equation}
pointwise $\mu$-almost everywhere.
By triangle inequality, it suffices
to estimate the following sums: one involving terms $m(P)\in\{M_2(P),\alpha_2(P)\}$,
and
the other involving terms in $\{M_4(P),\alpha_3(P)\}$. We
focus on the first sum; the second one is estimated
in an analogous manner, using $\mathbf{E}_{\omega_1}$.

By randomizing, using H\"older's inequality, extracting the operator norm of $T$,
and applying the contraction principle with inequalities \eqref{ekat},
\begin{equation}\label{eeka}
\begin{split}
&\mathbf{E}_{\omega_2}\bigg|
\sum_{\substack{P\in\mathcal{G}_1}} \langle \Delta_{P}
\widetilde{f}_1
\rangle_{P_j} m(P) \langle \Delta_Q \widetilde{f}_2\rangle_{Q_i}
\bigg| \\&\lesssim  \mathbf{T}\cdot \mathbf{E}_{\omega_2}
\bigg\{\bigg\| \sum_{S\in\mathcal{G}_1} \varepsilon_S\mathbf{1}_{S_{j,\partial}} 
\langle \Delta_P\widetilde{f}_1\rangle_{S_j}\bigg\|_{L^{p_1}(\Omega\times\mathbf{R}^n)} \bigg\|\sum_{Q\in\mathcal{D}_2}\varepsilon_Q\mathbf{1}_{Q_i}
\langle \Delta_Q\widetilde{f}_2\rangle_{Q_i}\bigg\|_{L^{p_2}(\Omega\times\mathbf{R}^n)}\bigg\}.
\end{split}
\end{equation}
By the contraction
principle, we find that the last factor is $\omega_2$-uniformly bounded by $\lesssim \lVert \widetilde{f}_2\rVert_{p_2}= 1$.
In order to treat the remaining factor, we write
\[
\delta^\upsilon(k)=\bigcup_{m=k-r-1}^{k-1} \bigcup _{Q\in\mathcal{D}_{2,m}} \delta^\upsilon_Q.
\]
By \eqref{rems} and \eqref{e.basic_one},
$\mathbf{1}_{S_{j,\partial}}\le \mathbf{1}_{S_j}\mathbf{1}_{\delta_{Q_i}^\upsilon}\le \mathbf{1}_{S_j}\mathbf{1}_{\delta^\upsilon(k)}$ if $Q=Q(S)$ with $S\in\mathcal{G}_{1,k}$.
Fix $x\in\mathbf{R}^n$. The random variables $\rho_k:=\mathbf{1}_{\delta^\upsilon(k)}(x)$ as functions of
$\omega_2\in (\{0,1\}^n)^\mathrm{Z}$ belong to $L^t((\{0,1\}^n)^\mathrm{Z})$,
\[
\sup_{k\in\mathrm{Z}} \|\mathbf{1}_{\delta^\upsilon(k)}(x)\|_{L^t((\{0,1\}^n)^\mathrm{Z})} = \sup_{k\in\mathrm{Z}}\mathbf{P}_{\omega_2} (\mathbf{1}_{\delta^\upsilon(k)}(x)=1)^{1/t}\le C(r)\upsilon^{1/t}.
\]
Hence, proceeding as in connection with \eqref{e.crc}, we find that
\begin{align*}
\mathbf{E}_{\omega_2}\bigg\| \sum_{S\in\mathcal{G}_1} \varepsilon_S\mathbf{1}_{S_{j,\partial}}\langle \Delta_P\widetilde{f}_1\rangle_{S_j}\bigg\|_{L^{p_1}(\Omega\times\mathbf{R}^n)}\le 
C(r)\upsilon^{1/t}\bigg\|  \sum_{P\in\mathcal{D}_1} \varepsilon_P\mathbf{1}_{P_j}
\langle  \Delta_P\widetilde{f}_1\rangle_{P_j} \bigg\|_{L^{p_1}(\Omega\times\mathbf{R}^n)}.
\end{align*}
The last term is bounded by a constant multiple of
$C(r)\upsilon^{1/t}$.
\end{proof}

\begin{bibsection}
\begin{biblist}
\bibselect{refs}
\end{biblist}
\end{bibsection}

\end{document}